\documentclass[12]{amsart}

\usepackage{amssymb}
\setlength{\textwidth}{12.5cm}
\setlength{\textheight}{20cm}
\setlength{\evensidemargin}{\oddsidemargin}
\setlength{\topmargin}{0pt}
\newtheorem{theorem}{Theorem}

\newtheorem{corollary}[theorem]{Corollary}
\newtheorem{lemma}[theorem]{Lemma}

\newtheorem{que}{Question}

\theoremstyle{definition}
\newtheorem{delfin}{Definition}

\newtheorem{remark}[theorem]{Remark}


\begin{document}
\title{How far can we go with Amitsur's theorem in differential polynomial rings?}

\author[Smoktunowicz]{Agata Smoktunowicz}
\address{Maxwell Institute for Mathematical Sciences, School of Mathematics,\newline   University of Edinburgh,\newline JCM Building, King’s Buildings, Mayfield Road
Edinburgh EH9 3JZ, Scotland, UK,\newline   E-mail: A.Smoktunowicz@ed.ac.uk}

\subjclass[2010]{Primary 16S32, 16N20, 16N40, 16D25, 16W25, 15B99} 
\keywords{Differential polynomial rings, matrices, the Jacobson radical, nil and nilpotent ideals}

\date{\today}

\begin{abstract} 
 A well-known theorem by S.A. Amitsur shows that the Jacobson radical of the polynomial ring
$R[x]$ equals $I[x]$ for some nil ideal $I$ of $R$. In this paper, however, we show that this is not the case for differential polynomial rings, by proving that there is a ring $R$ which is not nil and a derivation $D $ on $R$ such that the differential polynomial ring $R[x; D ]$ is Jacobson radical. We also show that, on the other hand, the Amitsur theorem holds for a differential polynomial ring $R[x; D]$, provided that $D$ is a locally nilpotent derivation and $R$ is an algebra over a field of characteristic $p>0$. The main idea of the proof  introduces a new way of embedding differential polynomial rings into bigger rings,  which we name platinum rings, plus a key part of the proof  involves the solution of matrix theory-based problems.
\end{abstract}

\maketitle


\section{Introduction} Let $R$ be a noncommutative associative ring. 
In 1956, S.A. Amitsur proved that the Jacobson radical of the polynomial ring $R[x]$ equals $I[x]$ for some nil ideal $I$ of $R$  \cite{lam}. Then in 1980, S. S. Bedi and J. Ram extended Amitsur's theorem to skew polynomial
rings of automorphism type \cite{bendi}. The question then  arises as to whether Amitsur's theorem also holds for differential polynomial rings; that is, whether  the Jacobson radical of $R[x; D ]$ equals $I[x; D ]$ for a nil ideal $I$ of $R$. 
 In 1975, D. A. Jordan \cite{Jordan} showed that 
  Amitsur's theorem holds for   differential polynomial rings $R[x; D ]$, provided that $R$ is a Noetherian ring with an identity, and in 1983, 
M. Ferrero, K. Kishimoto and K. Motose \cite{ferrero} showed that in the general case the Jacobson radical of $R[x, D ]$ equals $I[x, D ]$ for 
 an ideal $I$ of $R$ (and $I$ is nil if $R$ is commutative).  However, it remained an open question as to whether $I$ needs to be nil. We will answer this question in the negative, by proving the following theorem.
\begin{theorem}\label{main} Let $K$ be an arbitrary subfield  of  the algebraic closure of a finite field. There is an $K$-algebra $R$  and a derivation $D$ on $R$ such that $R$ is not nil and the algebra $R[x; D ]$ is Jacobson radical. 

In particiular 
there is a ring $R$ which is not nil and a derivation $D $ on $R$ such that the ring $R[x; D ]$ is Jacobson radical.
\end{theorem}
 Let $F$ be the algebraic closure of a finite field, and let $R$ be an $F$-algebra. Let $K$ be a subfield (possibly finite) of the field $F$, then $R$ is also a $K$-algebra. Moreover,  $R$ is  nil as a $K$-algebra if and only if $R$ is nil as an $F$-algebra; similarly 
$R[x; D ]$ is Jacobson radical as a $K$-algebra if and only if $R[x; D ]$ is Jacobson radical as an $F$-algebra.
 Therefore Theorem \ref{main} also holds in the case when $K$ is a finite field.

However, in the case when $D$ is a locally nilpotent derivation we are able to show the following.
\begin{theorem}\label{aggi}
 Let $F$ be a field of characteristic $p>0$, let $R$ be an $F$-algebra and $D$ be a derivation on $R$. If $D$ is a locally nilpotent derivation, then the Jacobson radical of the differential polynomial ring $R[x; D]$ equals $I[x]$ for some nil ideal $I$ of $R$. 
\end{theorem}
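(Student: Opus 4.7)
The plan is to combine the Ferrero-Kishimoto-Motose structure theorem cited in the introduction with a direct inversion calculation in characteristic $p$. That theorem already asserts $J(R[x;D]) = I[x;D]$ for some $D$-invariant ideal $I$ of $R$ (namely $I = J(R[x;D]) \cap R$), so everything reduces to showing that each $r \in I$ is nilpotent; this is what I would aim for.

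Fix $r \in I$. Since $D$ is locally nilpotent, I would choose $n$ with $D^{p^n}(r) = 0$. In characteristic $p$ the binomial coefficients $\binom{p^n}{i}$ vanish for $0 < i < p^n$, whence the standard identity $x^{p^n} a = a x^{p^n} + D^{p^n}(a)$ holds for every $a \in R$; moreover $D^{p^n}$ is itself a derivation, so $\ker D^{p^n}$ is a subring of $R$ and therefore contains $r^k$ for every $k \geq 1$. In particular $x^{p^n}$ commutes with every power of $r$.

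The main step is to exploit that $r \in J(R[x;D])$ implies $1 + r x^{p^n}$ is a unit in the (unitized) ring $R[x;D]$ with a unique polynomial inverse $g = \sum_i a_i x^i$. Expanding $(1 + r x^{p^n})g = 1$ and collecting coefficients of $x^j$ yields the recursions $(1 + r D^{p^n})(a_0) = 1$, $(1 + r D^{p^n})(a_j) = 0$ for $0 < j < p^n$, and $(1 + r D^{p^n})(a_j) = -r a_{j-p^n}$ for $j \geq p^n$. A short induction using $D^{p^n}(r) = 0$ gives the clean telescoping identity $(rD^{p^n})^m(s) = r^m D^{m p^n}(s)$ for every $s \in R$, so local nilpotence of $D$ makes $1 + r D^{p^n}$ a bijective additive operator on $R$, with inverse acting as a finite sum on any fixed element. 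Combined with $D^{p^n}(r^k) = 0$, the recursion is solved uniquely by $a_0 = 1$, $a_{k p^n} = (-r)^k$, and $a_j = 0$ whenever $p^n \nmid j$. Hence $g = \sum_{k \geq 0} (-r)^k x^{k p^n}$, and the polynomiality of $g$ forces $r^k = 0$ for all sufficiently large $k$.

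The principal obstacle I anticipate is justifying the uniqueness of the recursion's solution, which reduces to the bijectivity of the additive operator $1 + r D^{p^n}$ on $R$. This is precisely where local nilpotence is essential: the telescoping identity $(rD^{p^n})^m = r^m D^{m p^n}$ collapses gracefully only because $D^{p^n}(r) = 0$, and the formal inverse $\sum_m (-r)^m D^{m p^n}$ terminates on each argument only because $D$ is locally nilpotent. Indeed Theorem~\ref{main} warns that without such a hypothesis the conclusion can fail entirely.
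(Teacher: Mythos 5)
Your proof is correct, and the core mechanism is the same as the paper's: in characteristic $p$ the binomial coefficients $\binom{p^n}{i}$ vanish for $0<i<p^n$, which both makes $D^{p^n}$ a derivation and yields $x^{p^n}a = ax^{p^n}+D^{p^n}(a)$; local nilpotence then supplies an $n$ with $D^{p^n}(r)=0$, and the (quasi-)inverse of $1+rx^{p^n}$ is forced to be the geometric series $\sum_k(-r)^kx^{kp^n}$, whose polynomiality kills $r$. The execution differs, though. The paper first restricts to the subring $S$ generated by $r$, the coefficients of the quasi-inverse, and all their $D$-derivatives, checks that $D$ is \emph{nilpotent} (not merely locally nilpotent) on $S$, then embeds $S[x;D]$ into the formal power series ring $Q$ over $S$ where the geometric series literally lives, and finally invokes uniqueness of quasi-inverses (Lemma~\ref{known2}) to match the polynomial quasi-inverse with the series. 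Your route avoids the auxiliary subring $S$ and the power series ring $Q$ entirely: you collect the coefficient of each $x^j$ in $(1+rx^{p^n})g=1$ to get a recursion, observe that $\ker D^{p^n}$ is a subring containing all powers of $r$, prove the telescoping identity $(rD^{p^n})^m=r^mD^{mp^n}$, and deduce that $1+rD^{p^n}$ is a bijective additive operator on $R$ (its inverse $\sum_m(-1)^mr^mD^{mp^n}$ terminates on each element by local nilpotence), so the recursion has a unique solution, namely $a_{kp^n}=(-r)^k$ and $a_j=0$ otherwise. This is a bit cleaner conceptually, since all the work stays inside $R[x;D]$ and no completion has to be constructed; the price is the small extra observations that $D^{p^n}$ is a derivation and that $1+rD^{p^n}$ is invertible. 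Your framing also makes the reduction to the Ferrero--Kishimoto--Motose theorem explicit (showing every $r\in I=J(R[x;D])\cap R$ is nilpotent), which the paper's proof leaves slightly implicit by opening with "If $R[x;D]$ is Jacobson radical"; your version is the cleaner statement of what actually needs to be shown.
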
   
In 1987,  J. Bergen, S.  Montgomery and D.S.  Passman showed that Amitsur's theorem also  holds for differential polynomial rings in the case where  $R$ is a polynomial identity algebra, and obtained  far-reaching related results for enveloping algebras of Lie algebras and crossed products  \cite{montgomery}. Surprising applications of derivations in  Lie algebras and  nil algebras
 were found by V. M. Petrogradsky,  I.P. Shestakov and  E. Zelmanov \cite{petr, zel, sh}.
 We also note that the Jacobson radical of a ring $R[x; D ]$ in the case when $R$ has no nil ideals was investigated by P. Grzeszczuk and J. Bergen  \cite{grzeszczuk}. For other results on such rings,  see  \cite{ali, yuan}. 
  Interesting results in the case where $R$ is a polynomial identity ring were obtained by J. Bell, B. Madill and F. Shinko in  \cite{Jason}, and  by B. Madill in \cite{blake}; for example, in \cite{Jason} it was shown that, if $R$ is a locally nilpotent ring satisfying a polynomial identity, then $R[x; D]$ is Jacobson radical. 
 This does not hold in general for an  arbitrary locally nilpotent ring $R$ (see \cite{sz}).

 It was shown by J. Krempa \cite{krempa} that the Koethe conjecture is equivalent to the assertion that polynomial rings over nil rings are Jacobson radical. Notice that logically Amitsur's result works  in the opposite direction to the Koethe problem. In the direction of Amitsur  it was shown by P. Nielsen and M. Ziembowski \cite{nielsen2} that $R[x; D]$  need not be prime radical provided that $R$ is a commutative  nil ring of bounded index  of nilpotency. Recall also that it was shown in \cite{ferrero} that if $R[x;D]$ is Jacobson radical then $R$ is Jacobson radical. 
 Notice that if  $R$ is an algebra over a field whose cardinality exceeds the dimension of $R$ as a vector space over $F$, and such that $R[x;D]$ is Jacobson radical, then $R[x; D]$ is nil and hence $R$ is nil \cite{amitsur}.
 The following questions remain open.

\begin{que}\label{x} Let $R$ be a ring without nil ideals, and $D$ a derivation on $R$; does it follow then that $R[x; D]$ is semiprimitive?
\end{que}
 Notice that by Theorem \ref{aggi} the answer to Question \ref{x} is affirmative when $R$ is an algebra over a field of positive characteristic and $D$ is locally nilpotent.
\begin{que} Let $F$ be a field of characteristic $p>0$, let $R$ be an $F$-algebra and $D$ be a locally nilpotent derivation on $R$. Suppose that $R[x; D]$ is nil. Does it follow that $R[x]$ is nil?
\end{que}
\begin{que}
 Are there examples as in Theorem \ref{main}, over the base field of characteristic $0$, or over non-algebraic extensions of finite fields? 
\end{que}
\begin{que}
 The examples constructed in Theorem \ref{main} are not finitely generated $F$-algebras. Is it possible to construct finitely generated examples?
\end{que}
\begin{que}
 Is Theorem \ref{aggi} valid over fields of characteristic zero?
\end{que}
\begin{que}
 It was proved in \cite{smok10} that any primitive ideal in $R[x]$, where $R$ is nil has the form $I[x]$. Is the analogous result valid for the differential polynomial setting?
\end{que}

Let $D $ be a derivation on a ring $R$. Recall that the differential polynomial ring $R[x; D]$ consists of all polynomials of the form $a_{n}x^{n}+\ldots +a_{1}x+a_{0}$, where $a_{i}\in R$ for
$i=0, 1, 2 \ldots ,n$. The ring $R[x; D]$ is considered with pointwise addition and multiplication given by $x^{i}x^{j}=x^{i+j}$ and $xa-ax=D(a)$, for all $a\in R$. For a given ring $A$ we denote by $A^{1}$ the usual extension with an identity of the ring $A$.
 In a non-unital algebra we assume that the ideal generated by a given set of elements contains these elements.
 The main idea of the proof is contained in the following result.
\begin{theorem}\label{inny100} Let $n>1$ be a natural number.
Let $F$ be an infinite field, and let $A'$ be a free (non-unital) $F$-algebra in generators $a_{1}, \ldots ,a_{n}$ and $x$, and let $A^{(*)}$ be the ideal of $A'$ generated by $a_{1}, \ldots , a_{n}$. Then $D(r)=xr-rx$ is a derivation on $A'$.
Let $P$ be the smallest subalgebra of $A'$ containing elements $a_{1}, \ldots , a_{n}$ and closed under the action of $D$.
 Let $I$ be an ideal in $A'$ with the property that  $\gamma _{t}(I)\subseteq I$
 for every $t\in F$,
 where $\gamma _{t}:A'\rightarrow A'^{1}$ is the  ring homomorphism such that $\gamma _{t}(a_{i})=a_{i}$ for all $i=1, \ldots ,n$ and $\gamma _{t}(x)=x+t$.
 Then the $F$-algebra \[A^{(*)}/I\cap A^{(*)}\]
 is isomorphic to the differential ring 
$\bar {P}[y; D']$, where $\bar {P}=P/I\cap P$ and $D'$ is the derivation on $\bar P$ such that $D'(p+(I\cap P))=xp-px+ (I\cap P)$ for every $p\in P$.
\end{theorem} Observe, on the other hand, that  a differential polynomial $F$-algebra $R[x;D]$ can be, in a natural way, embedded into the
 factor ring $<R,x>/I$, where  $<R,x>$ is the free product of $R$ and the polynomial ring $F[x]$ and $I$ is the ideal generated by relations $xr-rx-D(r)$. Notice that $\gamma _{t}(I)\subseteq I$ for  
every  $t\in F$,
 where $\gamma _{t}:<R,x>\rightarrow <R,x>^{1}$ is the  ring homomorphism such that $\gamma _{t}(r)=r$ for all $r\in R$ and $\gamma _{t}(x)=x+t$. This shows that differential polynomial rings have a presentation similar to the presentation from Theorem \ref{inny}.

An outline of the proof for Theorem \ref{main} now follows:
\begin{itemize}
\item Let $F$ be a field, and let $A'$ be a free algebra in generators $a,b,x$, and $A^{(*)}$ be the ideal of $A'$ generated by $a$ and $b$.
We introduce ideal $I^{(*)}$  in $A^{(*)}$ which is generated by entries of powers of some  matrices $X_{1}, X_{2}, \ldots $.
 It is then shown that $A^{(*)}/I^{(*)}$ is Jacobson radical. 

\item We introduce the platinum  ideal $L(I^{(*)})$ of $A^{(*)}$. We define $L(I^{(*)})$ to be the smallest ideal such that $I^{(*)}\subseteq L(I^{(*)})$ and $\gamma _{t}(I^{(*)})\subseteq L(I^{(*)})$
 for every $t\in F$,
 where $\gamma _{t}:A'\rightarrow A'^{1}$ is the  ring homomorphism such that $\gamma _{t}(a)=a$, $\gamma _{t}(b)=b$ and $\gamma _{t}(x)=x+t$.
\item It is then shown that  $A^{(*)}/L(I^{(*)})$ is isomorphic to some differential polynomial ring $Z[y;D].$
\item Since $A^{(*)}/I^{(*)}$ is Jacobson radical, then $A^{(*)}/L(I^{(*)})$ is Jacobson radical. It follows that $Z[y; D]$ is Jacobson radical.
\item Next we introduce Assumption $1$, and show that if $F$ is a field which is the algebraic closure of a finite field then Assumption $1$ holds.
\item    It is then shown that if Assumption $1$ holds then some subrings of $A^{(*)}/L(I^{(*)})$ are not nil, which implies that $Z$ is not nil. 
\item The last two sections contain matrix theory-based problems, which are an important part of the proof. 
\end{itemize}
For  general information on polynomial identity algebras we refer the reader to \cite{drensky} and \cite{rowen}, and for diferential polynomial rings over associative noncommutative rings to \cite{kharchenko, rowe} and \cite{Carl}. 
 We prove Theorem \ref{aggi} in Section $2$. 
 Sections $3-9$ and $10-17$ are mathematically independent of each other and hence can be considered separately (in Sections $3$-$9$ we prove Theorem \ref{main} under the Assumption $1$, and in Sections $10-17$ we prove Assumption $1$ for algebras over some fields). Theorem \ref{inny} is proved in Section \ref{platinum}.
\section{Proof of Theorem \ref{aggi}}

Let $R$ be a ring.  Recall that an element $r\in R$ is  quasi-invertible in $R$  if there is $s\in R$ such that $r+s+rs=r+s+sr=0$.
 As every ring can be embedded in a ring with an identity element, this can be written as $(1+r)(1+s)=1$.
 Such an element $s$ is called a quasi-inverse of $r$.
We start with the following well-known fact
\begin{lemma}\label{known2}
 Let $ Q$ be a ring, and let $a\in Q$ be quasi-invertible, and let $b,c\in Q$ be quasi-inverses of $a$; then $b=c$.  
\end{lemma}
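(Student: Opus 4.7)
The plan is to exploit the unitization trick that is already flagged in the excerpt: adjoin a formal identity to obtain a unital overring $Q^{1}$, and then deduce the lemma from the familiar uniqueness of two-sided inverses in a monoid.

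First I would translate the hypotheses. The assumption that $b$ is a quasi-inverse of $a$ bundles together both equalities $a+b+ab=0$ and $a+b+ba=0$, and similarly for $c$. Inside $Q^{1}$ these are exactly the two statements
\[
(1+a)(1+b) \;=\; (1+b)(1+a) \;=\; 1, \qquad (1+a)(1+c) \;=\; (1+c)(1+a) \;=\; 1.
\]
So the element $1+a$ of $Q^{1}$ admits $1+b$ and $1+c$ as two-sided multiplicative inverses.

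Second, I would invoke the one-line uniqueness argument for inverses in the multiplicative monoid of $Q^{1}$, which uses nothing beyond associativity:
\[
1+b \;=\; (1+c)(1+a)(1+b) \;=\; (1+c)\cdot 1 \;=\; 1+c.
\]
Since both sides lie in the coset $1+Q$, subtracting the identity gives $b=c$.

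I do not anticipate any real obstacle; the only points worth mentioning in a write-up are (i) that the stated definition of quasi-inverse is genuinely two-sided, which is what licenses the use of associativity in the key line above, and (ii) that the passage to $Q^{1}$ is harmless even when $Q$ already happens to have an identity, because $Q^{1}$ can always be taken to be the formal unitization. If one preferred to avoid $Q^{1}$ altogether, the same calculation can be carried out directly in $Q$ by expanding $cab$ in two ways using the identities $ab=-a-b$ and $ca=-a-c$, obtaining $b+cab = a+b+c-cb = c+cab$ and hence $b=c$; but the unitization version is cleaner and better matches the framing in the paragraph preceding the lemma.
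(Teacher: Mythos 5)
Your argument is the same one the paper uses: pass to the unitization $Q^1$, observe that $1+b$ and $1+c$ are two-sided inverses of $1+a$, and apply the one-line associativity computation for uniqueness of inverses in a monoid. The optional remark about a unitization-free version by expanding $cab$ two ways is a correct bonus, but it is not needed and the main argument matches the paper exactly.
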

\begin{proof} $Q$ is a subring of a ring  $Q^{1}$ with identity. Then $1+b=(1+b)((1+a)(1+c))=((1+b)(1+a))(1+c)=1+c$, so $b=c$.
\end{proof}

Let $F$ be a field of characteristic $p>0$ and let $R$ be an $F$-algebra. Let $D$ be a locally nilpotent derivation on $R$. 
  Let $a\in R$, then $D^{n}(a)=0$ for some $n$. Observe that using rule $x\cdot D^{n}(a)-D^{n}(a)\cdot x=D^{n+1}(a)$, it can be proved by induction that $D^{n}(a)=\sum_{i=0}^{n}\alpha _{i}x^{n-i}ax^{i}$ where $\alpha _{i}=(-1)^{i}{n \choose i}$ (it can also be inferred using rule $(z- q)^{n}=\sum _{i=0}^{n}\alpha _{i}z^{j-i}{q}^{i},$ where $z$ denotes multiplication from the left by $x$, and $ {q}$ multiplication from the right by $x$).
 Then $D^{p^{m}}(a)=x^{p^{m}}\cdot a-a\cdot x^{p^{m}}.$  Notice that the  binomial coefficients ${n \choose i}$ are well defined for fields of finite characteristic.
  
{\bf Proof of Theorem \ref{aggi}}:
Let notation be as above, and let $a\in R$. 
 $D$ is a locally nilpotent derivation, so there is $m$ such that $0=D^{p^{m}}(a)=x^{p^{m}}\cdot a-a\cdot x^{p^{m}}$. 
If $R[x; D]$ is Jacobson radical, then $ax^{p^{m}}$ is quasi-invertible in $R[x, D ]$. Let $s$ be the quasi-inverse of $ax^{p^{m}}$; then  $s=\sum_{i=0}^{n}a_{i}x^{i}$ for some $a_{i}\in R$. Let $S$ be a subring of $R$ generated by elements $a, a_{0}, a_{1}, \ldots , a_{n}$ and elements $D^{i}(a)$, $D^{i}(a_{0}), D^{i}(a_{1})\ldots , D^{i}(a_{n})$ for $i=1,2, \ldots $.
 Then $D$ is a derivation on $S$ and $S[x; D]$ is a subring of $R[x; D]$. Notice that element $ax^{p^{m}}$ 
 is quasi-invertible in $S[x; D]$. 
 Recall that $D$ is a locally nilpotent derivation, so there is $k>m$ such that $0=D^{p^{k}}(a_{i})=x^{p^{k}}a_{i}-a_{i}x^{p^{k}}$ for $0\leq i\leq n$.
 Then  $x^{p^{k}}$ commutes with all elements of $S$, since $x^{p^{k}}D^{j}(a_{i})-D^{j}(a_{i})x^{p^{k}}=D^{p^{k}+j}(a_{i})=0$.
 Therefore, $D$ is a nilpotent derivation on $S$, since $D^{p^{k}}(r)=x^{p^{k}}\cdot r-r\cdot x^{p^{k}}=0$ for every $r\in S$.
Notice that $S[x; D]$ is a subring of a ring $Q$, 
 where $Q$ is  
 the set of all series $\sum_{i=0}^{\infty}c_{i}x^{i}$ with $c_{i}\in S$ with natural addition and multiplication 
$xc-cx=D(c)$ for $c\in S$. The multiplication on $Q$ is well defined because $D$ is a nilpotent derivation on $S$.
  Recall that $x^{p^{m}}\cdot a-a\cdot x^{p^{m}}=0$, hence $(ax^{p^{m}})^{i}=a^{i}\cdot x^{i\cdot p^{m}}$. 
Observe that for $c=a\cdot x^{p^{m}}$ we have  $(1+c)(1-c+c^{2}-c^{3}+\ldots) =1$.
 Therefore, $a'=\sum_{i=1}^{\infty }(-1)^{i}c^{i}=\sum_{i=1}^{\infty}(-1)^{i}a^{i}x^{i\cdot p^{m}}$ is a quasi-inverse of $a\cdot x^{p^{m}}$ in $Q$. By Lemma \ref{known2}, we get $s=a'$, hence $a'\in S[x;D]$. It follows that 
$a^{i}x^{i\cdot p^{m}}=0$ for almost all $i$; hence $a$ is nilpotent.

\section{Definitions and the Jacobson radical}
 Let $F$ be a field. Throughout this paper we will assume that $F$ is a countable and infinite field. Notice in particular that the algebraic closure of any finite field is countable. 
 Let $A'$ be a free noncommutative $F$- algebra generated by elements $a, b$ and $x$; $A'$ is a free algebra in the category of non-unital algebras, so it does not contain elements with non-zero constant term. We assign gradation $1$ to elements $a$ and $b$ and  we assign gradation $0$ to element $x$. By $R$ we denote the subalgebra of $A'$ generated by $a$ and $b$, and by $A$ we denote a subalgebra of $A'$ generated by $ax^{i}, bx^{i}$ for $i\geq 0 $. Notice that $A=RA'+R$, and hence $A$ is a left ideal in $A'$. 
 By $A'(n)$  we will denote the linear space spanned by all elements with gradation  $n$ in $A'$.
 In general, if $T$ is a linear subspace of  $A'$, then we denote $T(n)=T\cap A'(n)$. In particular, $A(n)$ denotes 
 the linear space spanned by all elements with gradation  $n$ in $A$. For a given ring $Q$ we denote by $Q^{1}$ the usual extension with an identity of the ring $Q$.
  By $\langle x\rangle $ we will denote the ideal generated by $x$ in $A'$.
 
Denote  $ A^{(*)}=A+xA+x^{2}A+\ldots =\sum_{i=0}^{\infty }x^{i}A$, notice that $A^{(*)}$ is the ideal of $A'$ generated by $a$ and $b$. Given an ideal $I$ in $A$ 
 we denote $I^{(*)}=I+xI+x^{2}I+\ldots =\sum_{i=0}^{\infty }x^{i}I$.

 The following Lemma is a reformulation of Lemma $7.2$ in \cite{bull}. 
\begin{lemma}\label{p} Let $r\in A$. Then there is a matrix $X_{r}$ of some finite size with entries in $A(1)$ and such that for every $n>0,$  
  $r+Q_{r, n}$ is quasi-invertible in algebra $A/Q_{r, n}$ where $Q_{r, n}$  is the ideal generated by coefficients of matrix $X_{r}^{n}$ in $A$. If $r\in R$, then the quasi-inverse of $r+Q_{r, n}$ equals $s+Q_{r, n}$ for some $s\in R$. If $r\in \langle x\rangle $, then there is $\alpha (X_{r})$ such that $X_{r}^{i}$ has all entries in $\langle x\rangle $ for every $i>\alpha (X_{r})$. 
\end{lemma}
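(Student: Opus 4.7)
Write $r = \sum_{i=1}^N m_i$ as a finite sum of monomials in the gradation-$1$ generators $\{ax^{k}, bx^{k} : k \geq 0\}$ of $A$, say $m_i = g_{i,1} g_{i,2} \cdots g_{i,\ell_i}$, and let $L = \max_i \ell_i$. The plan is to construct $X_r$ as the labelled adjacency matrix of a finite directed graph $G_r$ whose closed walks at a distinguished vertex realize the monomials $m_i$ and their concatenations. Take a distinguished vertex $s_0$ together with intermediate vertices $(i,j)$ for $1 \leq i \leq N$ and $1 \leq j \leq \ell_i - 1$, and add edges $s_0 \to (i,1)$ labelled $g_{i,1}$, $(i,j) \to (i,j+1)$ labelled $g_{i,j+1}$, and $(i,\ell_i - 1) \to s_0$ labelled $g_{i,\ell_i}$ (with $s_0 \to s_0$ labelled $g_{i,1}$ when $\ell_i = 1$). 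Define $X_r$ to be the resulting adjacency matrix, whose entries lie in $A(1)$ and consist of generators appearing in $r$.

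Since $A$ is positively graded with $A(0) = 0$ and $Q_{r,n}$ is a homogeneous ideal (being generated by the degree-$n$ entries of $X_r^n$), quasi-invertibility of $r+Q_{r,n}$ in $A/Q_{r,n}$ coincides with the condition that the formal series $-r + r^2 - r^3 + \cdots$ terminates there, i.e.\ with nilpotence of $r$ modulo $Q_{r,n}$. The main combinatorial task is therefore to show that for each $n > 0$ there exists $k(n)$ with $r^{k(n)} \in Q_{r,n}$; once this is done, the truncation $s = -r + r^2 - \cdots + (-1)^{k(n)-1} r^{k(n)-1}$ satisfies $r + s + rs = (-1)^{k(n)-1} r^{k(n)} \in Q_{r,n}$ and symmetrically on the other side, supplying the quasi-inverse. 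The entry $(X_r^n)_{s_0, s_0}$ equals the sum $\sum m_{i_1} \cdots m_{i_s}$ over compositions with $\sum \ell_{i_j} = n$, and the other entries admit analogous descriptions as sums of walks; the plan is then to chain ideal-products of several such entries inside $A$ to produce enough relations to force the desired power of $r$ into $Q_{r,n}$.

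For the two auxiliary conclusions: if $r \in R$ then every $g_{i,j}$ belongs to $\{a,b\}$, so the entries of $X_r$ lie in $R$ and the quasi-inverse above---being a polynomial in $r$---lies in $R$; if $r \in \langle x \rangle$, setting $\alpha(X_r) = L$ works, because any walk in $G_r$ of length $n > L$ must include a full monomial cycle (a walk which completes no monomial stays inside a single chain for at most $L - 1$ steps before being forced to pass through $s_0$), and each completed monomial contributes an $x$-containing letter to the label product. The hard part will be the nilpotence assertion in the second paragraph: the entries of $X_r^n$ are sums of compositions rather than individual monomial products, so these cannot be recovered by linear combination alone, and one must instead chain ideal-products of several entries together with elements of $A$---the delicate combinatorial argument which is essentially the content of Lemma~7.2 of \cite{bull}, of which the present lemma is the announced reformulation.
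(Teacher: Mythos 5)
Your proposal follows essentially the same route as the paper's proof: both constructions of $X_r$ and the hard combinatorial step showing that some power of $r$ lands in $Q_{r,n}$ are attributed to Definition~7.1 and Lemma~7.2 of \cite{bull}, which is exactly what the paper does. You go further than the paper in supplying a concrete graph-theoretic model for $X_r$ and in deriving the $r\in R$ assertion directly (the quasi-inverse is a polynomial in $r$, hence lies in $R$), whereas the paper reruns the argument inside $R$ and then invokes uniqueness of quasi-inverses via Lemma~\ref{known2}; both routes are fine.

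One quantitative step is off, however. Your claim that $\alpha(X_r)=L$ suffices for the $r\in\langle x\rangle$ part does not hold: a walk in $G_r$ can begin at an interior vertex $(i,j)$, exit its chain through $s_0$ after $\ell_i-j$ steps, and then enter a second chain $i'$ and stop at $(i',j')$ before completing it. Such a walk has length $(\ell_i-j)+j'$, which can be as large as $2(L-1)$, and its label is a proper suffix of $m_i$ concatenated with a proper prefix of $m_{i'}$; neither piece need contain an $x$-letter even though the full monomials do. So you need $\alpha(X_r)\geq 2L-2$ (any fixed bound of that order works, since for longer walks a full cycle is forced, and every $m_i\in\langle x\rangle$). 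This is easily repaired and does not affect the argument's structure. A smaller point: the phrase that quasi-invertibility in $A/Q_{r,n}$ ``coincides'' with nilpotence of $r$ there is an overstatement; you only use the implication that nilpotence forces quasi-invertibility, which is the correct direction, so nothing breaks, but the equivalence as stated is not established.
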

\begin{proof}
  To every $r\in A$ we can assign matrix $X_{r}$ of some finite size with entries in $A(1)$, like in Definition $7.1$ in \cite{bull}. Let $n$ be a natural number.  We can apply Lemma $7.2$ from \cite{bull} to $S=A(1)$  and $r=\sum_{i=1}^{\gamma }s_{i}$ with $s_{i}\in S^{i}=A(i)$. Recall that we used the following notation in  \cite{bull}, $v_{0}=1$
 and $v_{i}$ is the sum of all products $s_{j_{1}}s_{j_{2}}\ldots s_{j_{k}}$ where $j_{1}+\ldots +j_{k}=i$ and $k$ is arbitrary.
 Observe now that by Lemma $7.2$ in \cite{bull} $r$ is quasi-invertible in $A/Q(r,n)$.

 Observe also that if $r\in A\cap \langle x\rangle $, then for a sufficiently large $n$ matrix $X_{r}^{n}$ has all entries from  $A\cap \langle x\rangle $ by the Remark on page 925 in \cite{bull}.
 Notice also that all coefficients of $X_{r}^{n}$ are from $A(n)$.

 If $r\in R$,  then the entries of matrix  $Q_{r,n}$ are in $R$, hence by the above reasoning applied to ring $R$ instead of $A$ we get that $r+Q_{n}$ is quasi-invertible in $R/Q_{n},$ where $Q_{n}$ is the ideal generated by entries of $X_{r}^{n}$ in $R$. Observe that $Q_{n}\subseteq Q_{r,n}$. By Lemma \ref{known2},  $r+Q_{r,n}$ has a quasi-inverse of the form $s+Q_{r,n}$ for some $s\in R$, as required.
\end{proof}
 The field $F$  is countable, hence the set of elements of $A$ is countable (since $A$ is finitely generated). It follows that the set of all matrices  
 $X_{r}$ for $r\in A'$ is countable. We can ennumerate the matrices $X_{r}$ with either $r\in A\cap A'xA'$ or $r\in R$ as $X_{1}, X_{2}, \ldots $. 

The main result of this section is the following: 
\begin{theorem}\label{Jacobson} Let notation be as above, in particular let matrices $X_{1}, X_{2}, \ldots $ be as above.  Let $0<m_{1}<m_{2}< \ldots $
 be a sequence of natural numbers such that $20m_{i}$ divides $m_{i+1}$, and $m_{i}>\alpha (X_{i})$ (where $\alpha (X_{i})$ is as in Lemma \ref{p}). 
Let $S'_{i}$ be the linear space spanned by all entries of the matrix $X_{i}^{m_{i}}$ and  let
\[S_{i}=\sum_{j=1}^{\infty } A(j\cdot 20m_{i}-2m_{i})S'_{i}A(m_{i})A^{1}.\]
 Let $I$ be the ideal of $A$ generated by the entries of matrices $X_{k}^{30m_{k}}\cdot x^{i}$ for all $k>0$ and all $i\geq 0$
(where the multiplication $X_{k}^{30m_{k}}\cdot x^{i}$ is component-by-component).
  Then $I$  is a homogeneous ideal of $A$, $I$ is contained in $\sum_{i=1}^{\infty }S_{i}$  and  $A/I$ is Jacobson radical.  Moreover, $IA'\subseteq A'$, and 
 if $g+h\in I$ and $g\in R$ and $h\in A\cap \langle x\rangle $ then $g,h\in I$.
\end{theorem}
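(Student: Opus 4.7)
The plan is to construct $I$ by a countable induction, building an ascending chain of homogeneous ideals $I^{(0)}\subseteq I^{(1)}\subseteq\ldots$ of $A$, each a left ideal of $A'$ respecting the vector space decomposition $A=R\oplus(A\cap\langle x\rangle)$, with $I^{(k)}\subseteq\sum_{i\leq k}S_i$. At stage $k$ I enlarge $I^{(k-1)}$ by elements of $S_k$ so that the element $r_k$ associated to the matrix $X_k$ becomes quasi-invertible modulo $I^{(k)}$. Setting $I=\bigcup_k I^{(k)}$ then yields a homogeneous left ideal of $A'$ with $I=(I\cap R)\oplus(I\cap\langle x\rangle)$; this splitting property immediately gives the final clause of the theorem.

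To show that $A/I$ is Jacobson radical, I exploit the fact that $A\cap\langle x\rangle$ is a two-sided ideal of $A$ with $A/(A\cap\langle x\rangle)\cong R$. Since the class of Jacobson radical rings is closed under extensions, it suffices to prove that both $R/(I\cap R)$ and $(A\cap\langle x\rangle)/(I\cap\langle x\rangle)$ are Jacobson radical, which in turn reduces to showing that every element of $R$ and every element of $A\cap\langle x\rangle$ is quasi-invertible modulo $I$. This is precisely the set of $r$'s for which matrices $X_r$ have been enumerated.

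The main obstacle lies at stage $k$. By Lemma~\ref{p}, since $m_k>\alpha(X_k)$, the element $r_k$ is quasi-invertible modulo $Q_{r_k,m_k}$, and when $r_k\in A\cap\langle x\rangle$ the entries of $X_k^{m_k}$ all lie in $\langle x\rangle$, preserving the splitting. However, $Q_{r_k,m_k}=A^1\cdot S'_k\cdot A^1$ is \emph{not} contained in $S_k$, which forces left multipliers of degree $j\cdot 20m_k-2m_k$ and right multipliers in $A(m_k)\cdot A^1$. My proposal is to take $I^{(k)}$ to be $I^{(k-1)}$ together with the homogeneous left $A'$-ideal of $A$ generated by products $u\cdot e\cdot v$ with $u\in A(j\cdot 20m_k-2m_k)$, $e\in S'_k$, $v\in A(m_k)$, $j\geq 1$; these generators visibly lie in $S_k$. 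The hard part, requiring real technical work, is showing that such left-padded generators nevertheless suffice to yield a quasi-inverse of $r_k$ modulo $I^{(k)}$: one must refine the proof of Lemma~\ref{p} to exhibit a modified quasi-inverse $s_k$ whose residual $r_k+s_k+r_k s_k$ decomposes into the permitted $u\cdot e\cdot v$-products by absorbing a high-degree prefix of the required form. The divisibility condition $20m_i\mid m_{i+1}$ is essential here: it guarantees that generators introduced at later stages $j>k$ are degree-aligned along the $20m_k$-arithmetic progression, and so cannot inadvertently shortcut the quasi-inverse constructed at stage $k$.
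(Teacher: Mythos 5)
Your overall architecture mirrors the paper's: build $I$ out of ideals coming from matrix-entry data so that each enumerated $r_k$ becomes quasi-invertible modulo $I$, keep everything homogeneous, and reduce an arbitrary $r\in A$ to the cases $r\in R$ and $r\in A\cap\langle x\rangle$. Your appeal to closure of the class of Jacobson radical rings under extensions is a correct reframing of the paper's explicit computation $(1+r)(1+u')=1+v(1+u')$, and the splitting remark at the end is fine. However, at the step you yourself flag as "the hard part" there is a genuine gap, and the fix you propose (refining Lemma~\ref{p} to produce a padded quasi-inverse) is not the route. You correctly notice that $Q_{r_k,m_k}$, the ideal generated by entries of $X_k^{m_k}$, is not contained in $S_k$ --- entries of $X_k^{m_k}$ have degree $m_k$ while $S_k$ sits in degrees at least $20m_k$. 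But Lemma~\ref{p} gives quasi-invertibility of $r_k$ modulo $Q_{r_k,n}$ for \emph{every} $n$, so the remedy is simply to raise the exponent: the paper takes $n=30m_k$. An entry of $X_k^{30m_k}$ has degree $30m_k$ and, because entries of $X_k$ have degree one, factors through $A(l)\,S'_k\,A(m_k)\,A(28m_k-l)$ for \emph{any} $0\leq l\leq 28m_k$. That window of length $28m_k>20m_k$ is exactly what lets one absorb a left multiplier of arbitrary degree $d$: choose $l$ with $d+l=j\cdot 20m_k-2m_k$ for suitable $j\geq 1$. Consequently the full ideal $Q_{r_k,30m_k}$ is contained in $S_k$, so the paper can take $I$ to be the single explicit ideal of $A$ generated by all entries of the matrices $X_k^{30m_k}$, $k=1,2,\ldots$, and quasi-invertibility of every $r_k$ modulo $I$ is immediate from Lemma~\ref{p} applied as stated.

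By contrast, your proposed generators $u\cdot e\cdot v$ with $e$ an entry of $X_k^{m_k}$ (degree exactly $m_k$) and $u\in A(j\cdot 20m_k-2m_k)$ carry no such flexibility: multiplying on the left by $w$ of arbitrary degree $d$ produces $(wu)\,e\,v$ whose left factor has degree $d+j\cdot 20m_k-2m_k$, which generically misses the required arithmetic progression, so the two-sided ideal you generate in $A$ is not actually contained in $\sum_i S_i$. Even setting that aside, you have no argument that $r_k$ is quasi-invertible modulo this more restricted set, and that is precisely the difficulty that taking a higher power of $X_k$ makes vanish. Finally, the remark that $20m_i\mid m_{i+1}$ is "essential" to keep later stages from shortcutting earlier quasi-inverses misreads the mechanism: since $I$ is defined in one stroke and $Q_{r_k,30m_k}\subseteq I$ for each $k$, there is nothing for later stages to spoil; the divisibility hypothesis is used elsewhere in the paper, not in the proof of Theorem~\ref{Jacobson}.
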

\begin{proof}  By Lemma \ref{p}  all entries of matrices $X_{k}$ are in $A(1)$, hence  $I$ is a homogeneous ideal of $A$.  Let $k>0$, observe first that the ideal $I_{k}$ of $A$ generated by entries of the matrices  
$X_{k}^{30m_{k}}$  is contained in the subspace $S_{k}$.
  It follows because entries of every matrix $X_{k}$ have degree one. Namely, if $n>i+2m_{k}$ then  every entry of matrix
 $X_{k}^{n}$ belongs to $A(i)S'_{k}A(m_{k})A$ for every $0\leq i$. Similarly every entry of matrix
 $X_{k}^{n}\cdot x^{i}$ belongs to $A(i)S'_{k}A(m_{k})AA'\subseteq A(i)S'_{k}A(m_{k})A.$  
 Observe also that, by Lemma \ref{p}, all elements $r\in R$ and all elements $r\in A\cap \langle x\rangle $ are quasi-invertible in $A/I$.
Notice also that $IA'\subseteq I$, as $X_{k}^{n}\cdot x^{i}\cdot r$ has entries in $I$ for every $r\in A'$.

 We will now show that for every $r\in A$ element $r+I$ is quasi-invertible in $A/I$.
Let $r=u+v$, where $u\in R$ and $v\in A\cap \langle x\rangle $. 
 Since $u\in R$ then by Lemma \ref{p}, there is $u'\in R$ such that 
$(1+u)(1+u')+I=1+I$. Notice that element $(1+r)+I$ has right inverse if and only if element 
$(1+r)(1+u')+I$ has right inverse in $(A/I)^{1}$. We see that $(1+r)(1+u')+I=(1+u+v)(1+u')+I=1+v(1+u')+I$.
 By assumption $1+v(1+u')+I$  has right inverse by Lemma \ref{p}, because $v(1+u')\in A\cap \langle x\rangle $ since $v\in A\cap \langle x\rangle $. 
It follows that $1+r+I$ has  a right inverse in $(A/I)^{1}$.
In a similar way we show that $1+r+I$ has  a left inverse in $(A/I)^{1}$. Therefore $r+I$ is quasi-invertible in $A/I$ (similarly as in Lemma \ref{known2}).

The last assertion from the thesis of our theorems follws because $m_{i}> \alpha (X_{i})$, and so the ideal generated by entries of matrix $X_{i}^{30m_{i}}$
 is either contained in $\langle x\rangle $ or is generated by elements from $R$. 
\end{proof}
 
Recall that $ A^{(*)}=\sum_{i=0}^{\infty }x^{i}A$. Notice that $A'= A^{(*)}+xF[x]$ where $F[x]$ is the polynomial ring over $F$ (since $A'$ does not contain elements with non-zero constant terms). 
   Given an ideal $I$ in $A$ we denote  $I^{(*)}=I+xI+x^{2}I+\ldots =\sum_{i=0}^{\infty }x^{i}I$.
 
\begin{lemma}\label{mily} Let $ A^{(*)}$ be as above.
 Let $I$ be an ideal in $A$ which is also a right ideal in $A'$ (so $IA'\subseteq I$). Let $I^{(*)}=I+xI+x^{2}I+\ldots =\sum_{i=0}^{\infty }x^{i}I$. Then $I^{(*)}$ is an ideal in $A^{(*)}$ and $I\cap R=I^{(*)}\cap R$. In addition if $r+I$ is not a nilpotent in $A/I$ for some $r\in R$, then $r+I^{(*)}$ is not a nilpotent in $A^{(*)}/I^{(*)}$.
 Moreover, if $A/I$ is Jacobson radical then $A^{(*)}/I^{(*)}$ is Jacobson radical.
\end{lemma}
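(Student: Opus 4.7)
The plan is to treat the four assertions of the lemma in turn, with the Jacobson radical statement being the substantive one. Everything rests on the direct-sum decomposition
$$\bar A \;=\; A \oplus xA \oplus x^2A \oplus \cdots$$
as subspaces of the free algebra $A'$, forced by the fact that every monomial in $x^kA$ begins with $x^k$ followed by the letter $a$ or $b$, so the summands are pairwise disjoint. In particular, $I' = I \oplus xI \oplus x^2I \oplus \cdots$ is also a direct sum.

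For the ideal property, I would verify directly that $(x^k i)(x^j a) = x^k(ix^j)a \in x^kI$ (using $IA' \subseteq I$ and $IA \subseteq I$) and $(x^j a)(x^k i) = x^j(ax^k)i \in x^jI$ (using $AA' \subseteq A$ and $AI \subseteq I$) for $i \in I$, $a \in A$. The equality $I \cap R = I' \cap R$ is then immediate: since $R \subseteq A$, if $g = \sum_{k \geq 0} x^k i_k$ belongs to $R$, uniqueness in the decomposition $\bar A = A \oplus \bigoplus_{k \geq 1} x^k A$ forces $i_k = 0$ for $k \geq 1$ and $g = i_0 \in R$, so $g \in I \cap R$. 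The nilpotence claim follows by contraposition: if $r \in R$ with $r^n \in I'$, then $r^n \in I' \cap R = I \cap R \subseteq I$.

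The Jacobson radical assertion is the heart of the proof. Given $y = \sum_{k=0}^n x^k y_k \in \bar A$ with $y_k \in A$, I seek $s = \sum_{k=0}^n x^k s_k$ satisfying $y + s + ys \in I'$. A direct computation gives $y + s + ys = \sum_k x^k(y_k + s_k + y_k s)$, where $y_k s = \sum_\ell y_k x^\ell s_\ell \in A$ (note $y_k x^\ell \in A$ because $A$ absorbs $A'$ on the right). By the direct-sum property of $I'$, the requirement $y + s + ys \in I'$ becomes the system of congruences
$$\sum_{\ell=0}^n \bigl(\delta_{k\ell} + y_k x^\ell\bigr)\, s_\ell \;\equiv\; -y_k \pmod{I}, \qquad k = 0, 1, \ldots, n,$$
or in matrix form $(E + N)\,s = -y$ over $A/I$, where $E$ is the identity and $N \in M_{n+1}(A/I)$ has entries $N_{k\ell} = y_k x^\ell + I$.

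The key step is that since $A/I$ is Jacobson radical, so is the matrix ring $M_{n+1}(A/I)$, by the classical identity that the Jacobson radical of $M_m(R)$ is $M_m$ of the Jacobson radical of $R$. Hence $N$ has a quasi-inverse $Q \in M_{n+1}(A/I)$ with $N + Q + NQ = 0$; setting $s := -(E+Q)y$, lifting componentwise to $A$, and assembling $s = \sum x^k s_k \in \bar A$ solves the system, as verified by substituting and using $N + NQ = -Q$. This produces a right quasi-inverse for every element of $\bar A/I'$, and the standard argument that right and left quasi-inverses coincide once both exist shows that $\bar A/I'$ is Jacobson radical. The main obstacle is spotting the matrix reformulation; once one sees that $y_k s$ is the $k$-th entry of the column $Ns$, the matrix-ring Jacobson radical identity completes the proof cleanly.
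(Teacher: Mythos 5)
Your proof is correct, and the first three assertions (the ideal property, $I\cap R = I'\cap R$, and the nilpotence contrapositive) follow the paper's argument closely, relying on the direct-sum decomposition $\bar A = A\oplus xA\oplus x^2A\oplus\cdots$ (note that the paper's ``left ideal'' in the line $A=RA'+R$ should read ``right ideal,'' which is what you correctly use when writing $y_kx^\ell\in A$). For the Jacobson radical claim, however, you take a genuinely different route. The paper works top--down: it invokes Lam's Lemma 4.1 to show that every $y\in A$ lies in the Jacobson radical of $A'/I'$, then uses the fact that the Jacobson radical is a two-sided ideal of $A'/I'$ to conclude that $x^ky+I'$ lies in it for every $k$, hence that all of $\bar A/I'$ does, and finally checks that quasi-inverses of elements of $\bar A$ already live in $\bar A$. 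You instead work bottom--up: you expand $y=\sum_k x^ky_k$, reduce right quasi-regularity of $y+I'$ to the componentwise linear system $(E+N)\bar s=-\bar y$ over $A/I$, and solve it by appealing to the identity $J(M_{n+1}(A/I))=M_{n+1}(J(A/I))=M_{n+1}(A/I)$. Both arguments are sound; the paper's is shorter and leans on the abstract ideal-theoretic structure of the Jacobson radical, while yours is more explicit and self-contained, trading the appeal to the two-sided-ideal property for the matrix-ring Jacobson radical identity. One small point worth keeping in mind: the matrix $E+N$ lives in $M_{n+1}((A/I)^1)$ rather than in $M_{n+1}(A/I)$ itself, but since $N$ lies in the radical $M_{n+1}(A/I)$ of that unital ring, $E+N$ is invertible there with inverse $E+Q$ and $Q\in M_{n+1}(A/I)$, so the lifted $s$ does land in $\bar A$ as you claim.
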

\begin{proof} Assume that $A/I$ is Jacobson radical.  Observe that $I^{(*)}$ is a two-sided ideal in $A'$. From Lemma $4.1$ on page $50$ in \cite{lam} (by interchanging the left and the right side), we see that an element $y$ is in the Jacobson radical of $ A'/I^{(*)}$ if $yq+I^{(*)}$ is quasi-invertible in $A'/I^{(*)}$ for every $q\in A'$. Clearly, if $y\in A$ then $yq\in A$ for every $q\in A'$. By assumption, if $r=yq\in A$ then 
$r+I$ is quasi-invertible in $A/I$ and hence $r+I^{(*)}$ is quasi- invertible in $A'/I^{(*)}$. Therefore every element $a+I^{(*)}$ for $a\in A$ is in the Jacobson radical of $A'/I^{(*)}$.
 Recall that the Jacobson radical is a two sided ideal.  Therefore every element $a'+I^{(*)}$ for $a'\in A^{(*)}$ is in the Jacobson radical of $A'/I^{(*)}$. Observe that if $b+I^{(*)}$ is a quasi-inverse of $a'+I^{(*)}$,
 then $b=-a'b-a'\in A^{(*)}+I^{(*)}\subseteq A^{(*)}.$ Therefore we can assume that $b\in A^{(*)}$. It follows that $A^{(*)}/I^{(*)}$ is Jacobson radical.  

We will now show that $I\cap R=I^{(*)}\cap R$. Let $T_{i}=x^{i}(RA'+R)=x^{i}A$ for $i\geq 0$. Recall that $A'$ is a free algebra, and $x\notin R$.  
 Therefore if $0\neq t_{i}\in T_{i}$ for $i\geq 0 $ then elements 
$t_{0}, t_{1}, \ldots $ are linearly independent over $F$.

Let $i\in I$, then  $i=i_{0}+i_{1}+\ldots +i_{n}$ for some  $i_{0}\in I, i_{1}\in xI, \ldots , i_{n}\in x^{n}I$.
 Observe that since $I\subseteq A$ then $i_{j}\in T_{j}$ for $j=1,2, \ldots , n$.  
 If $i\in R$ then  $i-i_{0}=i_{1}+i_{2}+\ldots +i_{n}$. Notice that $i-i_{0}\in T_{0}$.  The above observation  on elements $t_{i}$ implies that elements 
$i-i_{0}, i_{1}, i_{2}, \ldots , i_{n}$ are all equal zero, so $i=i_{0}\in I$. 
 Therefore $I\cap R=I^{(*)}\cap R$.

Suppose now that $r+I^{(*)}$ is nilpotent in $ A^{(*)}/I^{(*)}$. Then $r^{n}\in I^{(*)}$ for some $n$, so $r^{n}\in I$ by the above,
 and so $r+I$ is nilpotent in $A/I$. 
\end{proof}

\section{ Platinum ideals and platinum subspaces}\label{platinum}

 In this section we introduce platinum spaces, which will be useful for constructing examples of differential polynomial rings.
 Let notation be as in the previous sections, in particular $A'$ is generated by elements $a,b$ and $x$, and $R$ is generated by elements $a$ and $b$. 

\begin{delfin}\label{ppp}
  Let $P$ be the smallest subring of $A'$ satisfying the following properties.
\begin{itemize}
\item $R\subseteq P$
\item If $c\in P$ then $xc-cx\in P$
\end{itemize} 
\end{delfin}

 For a $c\in R$ define $D (c)=xc-cx$. Then $D $ is a derivation on $P$. Therefore we can consider the differential polynomial ring 
 $P[y; D]$ where $yc-cy=D (c)$ for $c\in P$.

\begin{remark} Another way to define $P$ is to first to define the inner derivation $D$ associated with element $x$ as $D(r)=xr-rx$ for $r\in A'$, and then define $P$ as the intersection 
 of all subrings of $A'$ which contain $R$ and are closed under the action of $D$.
 It is then clear that $P\subseteq A'$.
\end{remark}

 Recall that $A'$ is a free algebra with free generators $a, b, x$.
Let $q\in F$ then let $\gamma _{q}: A'\rightarrow A'^{1}$ be a ring homomorphism such that 
\[ \gamma _{q}(a)=a, \gamma _{q}(b)=b, \gamma _{q}(x)=x+q.\]  

\begin{lemma}\label{staly}
 Let $q\in F$, then $ \gamma _{q}(p)=p$ for every $p\in P$.
\end{lemma}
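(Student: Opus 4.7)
The plan is to use the minimality of $P$: I would show that the set $P_0 = \{ p \in P : \gamma_q(p) = p \}$ is a subring of $A'$ that contains $R$ and is closed under the operation $c \mapsto xc - cx$, and then conclude $P_0 = P$ by Definition $1$.

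First I would check that $\gamma_q$ fixes every element of $R$. Since $\gamma_q$ is a ring homomorphism and $R$ is generated as an $F$-algebra by $a$ and $b$, and $\gamma_q(a) = a$, $\gamma_q(b) = b$ by definition, it follows that $\gamma_q(r) = r$ for all $r \in R$. Hence $R \subseteq P_0$.

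Next I would verify closure under $c \mapsto xc - cx$. Suppose $c \in P_0$, so $\gamma_q(c) = c$. Then
\[
\gamma_q(xc - cx) = \gamma_q(x)\gamma_q(c) - \gamma_q(c)\gamma_q(x) = (x+q)c - c(x+q) = xc - cx,
\]
where the constant $q$ cancels because it commutes with $c$. Thus $xc - cx \in P_0$. Since $\gamma_q$ is a ring homomorphism, $P_0$ is automatically closed under sums and products, so $P_0$ is a subring of $A'$. By the minimality of $P$ in Definition $1$, $P \subseteq P_0$, giving $\gamma_q(p) = p$ for every $p \in P$.

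There is no real obstacle here; the argument is essentially a structural induction on the construction of $P$, and the key verification is the one-line computation above showing that the $q$-shift of $x$ disappears from the commutator $xc - cx$.
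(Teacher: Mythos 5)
Your proof is correct and takes essentially the same approach as the paper: both argue by structural induction on the definition of $P$ (equivalently, by showing the fixed set of $\gamma_q$ is a subring containing $R$ and closed under $c\mapsto xc-cx$, then invoking minimality), with the same key cancellation $(x+q)c-c(x+q)=xc-cx$.
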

\begin{proof}
 We proceed by induction using the definition of $P$. Observe that $ \gamma _{q}(r)=r$ for every $r\in R$. 
 If $u,v\in P$ and $ \gamma _{q}(u)=u$ and $ \gamma _{q}(v)=v$ then $ \gamma _{q}(u+v)=u+v$ and 
$ \gamma _{q}(uv)=uv$ and $ \gamma _{q}(xu-xu)=(x+q)\gamma _{q}(u)-\gamma _{q}(u)(x+q)=xu-ux$. 
\end{proof}
\begin{lemma}\label{useful1}  Let  notation be as in Definition \ref{ppp} and let $F$ be an infinite field.  Let $f: P[y; D]\rightarrow A'$ be a $F$-linear mapping such that 
$f(p)=p$  and $f(py^{i})=px^{i}$  for $p\in P$, $i\geq 1$. Then $f$ is injective and $f$ is a homomorphism of rings.
\end{lemma}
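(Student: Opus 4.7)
My plan is to handle the two claims of the lemma separately: that $f$ respects multiplication, and that $f$ is injective. The homomorphism part is a direct calculation; injectivity will exploit the shift-invariance of $P$ provided by Lemma~\ref{staly}.

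For the homomorphism claim, it suffices by $F$-linearity to verify $f((py^i)(qy^j)) = f(py^i)\,f(qy^j)$ on products of basis monomials. Iterating the relation $yc - cy = D(c)$ in $P[y;D]$ gives the standard identity $y^i q = \sum_{k=0}^{i} \binom{i}{k} D^k(q) y^{i-k}$, so
\[
(py^i)(qy^j) \;=\; \sum_{k=0}^{i} \binom{i}{k}\, p\,D^k(q)\, y^{i+j-k},
\]
which $f$ sends to $\sum_{k} \binom{i}{k}\, p\,D^k(q)\, x^{i+j-k}$. Since $x$ satisfies $xc - cx = D(c)$ for every $c \in P$ in $A'$ by the very definition of $D$, the identical iteration gives $x^i q = \sum_k \binom{i}{k} D^k(q) x^{i-k}$, and hence $f(py^i)\,f(qy^j) = px^i \cdot qx^j$ expands to the same sum.

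For injectivity, suppose for contradiction that some nonzero $\sum_{i=0}^n p_i y^i$ lies in $\ker f$, with $p_n \ne 0$, so that $\sum_{i=0}^n p_i x^i = 0$ in $A'$. I will extend scalars to $A'[q] := A' \otimes_F F[q]$ with $q$ a central indeterminate and introduce the $F[q]$-algebra automorphism $\gamma$ given by $a \mapsto a$, $b \mapsto b$, $x \mapsto x + q$. A line-by-line rereading of the proof of Lemma~\ref{staly} shows it applies verbatim with $q$ central in place of a scalar in $F$: the only step involving $q$ is $\gamma(xu - ux) = (x+q)u - u(x+q) = xu - ux$, which uses only that $q$ commutes with $u$. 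Consequently $\gamma(p) = p$ for every $p \in P$, and applying $\gamma$ to the relation gives
\[
0 \;=\; \sum_{i=0}^{n} p_i (x+q)^i \;=\; \sum_{k=0}^{n} q^k \sum_{i=k}^{n} \binom{i}{k}\, p_i\, x^{i-k}
\]
in $A'[q]$. Since $\{q^k\}_{k \ge 0}$ is $A'$-linearly independent in $A'[q]$, each coefficient must vanish; the coefficient of $q^n$ is exactly $p_n$, giving $p_n = 0$ and contradicting the choice of $n$. Therefore $\ker f = 0$.

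The only subtlety I foresee is justifying the use of Lemma~\ref{staly} with $q$ a central indeterminate rather than a scalar, but this is a purely formal extension of its inductive proof. Once accepted, both halves of the lemma reduce to essentially routine computations in the free algebra $A'$.
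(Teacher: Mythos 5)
Your proof is correct, and while the homomorphism half matches the paper's computation essentially line for line, the injectivity half takes a genuinely different route. The paper evaluates the identity $\sum_i c_i x^i = 0$ at $n+1$ distinct scalars $q_1, \ldots, q_{n+1} \in F$ via $\gamma_{q_j}$, assembles the results into a system $(c_0, \ldots, c_n)M = 0$ with $M$ a transposed Vandermonde matrix over $F[x]$, and invokes $\det M = \prod_{i>j}(q_i - q_j) \in F^\times$ to force $c_0 = \cdots = c_n = 0$. You instead adjoin one central indeterminate $q$, check that the induction in Lemma~\ref{staly} carries over verbatim to the $F[q]$-algebra automorphism $\gamma$ of $A'[q]$ (the key line $\gamma(xu-ux) = (x+q)u - u(x+q) = xu - ux$ only uses centrality of $q$, not $q \in F$), and then read off the coefficient of $q^n$ using the direct-sum decomposition $A'[q] = \bigoplus_{k\ge 0} A' q^k$. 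Both arguments are sound. Yours is the more economical one --- a single substitution instead of $n+1$, and no matrix inversion --- and it does not require $F$ to contain $n+1$ distinct scalars, a hypothesis the paper's Vandermonde step implicitly needs (harmless here, since $F$ is taken to be the algebraic closure of a finite field, hence infinite). A small cosmetic note: you get $p_n = 0$ and stop via contradiction; reading off the $q^k$ coefficients for $k = n, n-1, \ldots, 0$ in turn gives all $p_i = 0$ directly without the contrapositive framing, but this is a matter of taste.
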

\begin{proof}  
 Observe that $P$ embeds into $A'$ in a natural way as a subring, hence $f(p)$ is well defined as a ring homomorphism for $p\in P$. 
 Every element of $ P[y; D]$ can be uniquely written as a linear combination of elements $py^{i}$ with $p\in P$, hence $f$ is well defined as a linear mapping. We will show that $f$ is a ring homomorphism. Notice first that binomial coefficients $n\choose i$ are well definied in fields of finite characteristic.
 Observe that if $p,q\in P[y; D]$ then $py^{n}q=p\sum _{i=0}^{n}{n\choose i}D ^{i}(q)y^{n-i}$.

 Therefore 
$f(py^{n}\cdot qy^{j})=f(p\sum _{i=0}^{n}{n\choose i}D^{i}(q)y^{n-i}y^{j})=p\sum _{i=0}^{n}{n\choose i}D ^{i}(q)x^{n-i+j}.$
 On the other hand, 
$f(py^{n})f(qy^{j})=px^{n}\cdot qx^{j}=p\sum _{i=0}^{n}{n\choose i}D ^{i}(q)x^{n-i+j}.$
 Consequently,
 $f(py^{n}\cdot qy^{j})=f(py^{n})f(qy^{j})$.

We need to show that the kernel of $f$ is zero. Suppose that $f(\sum_{i=0}^{n}c_{i}y^{i})=0$ for some 
 $c_{i}\in P$. 
Since $f(\sum_{i=0}^{n}c_{i}y^{i})=0$ then $\sum_{i=0}^{n}c_{i}x^{i}=0$.

If $c=0$ in $A'$ then clearly $\gamma _{q}(c)=0$ for every $q\in F$ (since $\gamma _{q}$ is an homomorphism of rings).
 By Lemma \ref{staly} we get $0=\gamma_{q}(\sum_{i=0}^{n}c_{i}x^{i})=\sum_{i=0}^{n}c_{i}(x+q)^{i}$.
 We can write such equations for pairwise distinct  elements $q_{1}, q_{2}, \ldots , q_{n+1}\in F$ and then write 
 them as $(c_{0}, c_{1}, \ldots , c_{n})M=0$ where $M$ is a matrix with $i$-th column equal to 
$(1, (x+q_{i}), (x+q_{i})^{2}, \ldots , (x+q_{i})^{n}    )^{T}.$ Observe that $M$ is a transposition of a  Vandermonde matrix, and hence the determinant of $M$ is 
$det(M)=\prod_{i>j}(q_{i}-q_{j})\in F$. Hence there is matrix $N$ in $F[x]$ such that $MN=Id\cdot det(M)$, where $Id$ is the identity matrix. It follows that  $(c_{0}, c_{1}, \ldots , c_{n})M=0$ implies $(c_{0}, c_{1}, \ldots , c_{n})det(M)=0$, and so $c_{0}=c_{1}=\ldots =c_{n}=0$.
\end{proof}  
 Recall that $ {A}^{(x)}=A+xA+x^{2}A+\ldots $.
\begin{lemma}\label{writing}
Every element of  $A^{(*)}$  can be uniquely written in the form $P+Px+Px^{2}+\ldots ,$ 
moreover since  $A=RF[x]+RA^{(*)}$ every element of $A$ can be written in the form $R[x]+RP+RPx+RPx^{2}+\ldots $ 
(notice also that $RP\subseteq P\cap A$). 
\end{lemma}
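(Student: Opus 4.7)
The lemma has three assertions: every element of $\bar{A}$ admits a representation $p_0 + p_1 x + p_2 x^2 + \ldots$ with $p_i \in P$; the representation is unique; and the analogous statement holds for $A$ with coefficients in $RP$, together with $RP \subseteq P \cap A$. My plan is to handle existence and uniqueness separately, then deduce the $A$-statement by multiplying through by $R$.

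For existence, set $T = P + Px + Px^2 + \ldots$ I would verify that $T$ is a subalgebra of $A'$. Closure under left multiplication by $x$ is immediate from $x(p x^i) = p x^{i+1} + D(p) x^i$, and $D(p) \in P$ by the $D$-stability built into the definition of $P$. Multiplicative closure reduces to the commutation identity $x^i q = \sum_{k=0}^{i} \binom{i}{k} D^k(q) x^{i-k}$ in $A'$, proved by easy induction on $i$ using $xq - qx = D(q)$, whose coefficients $D^k(q)$ all lie in $P$ for the same reason. Since $T$ contains $1, a, b, x$, we have $T = A'$, and in particular $T \supseteq \bar{A}$.

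For uniqueness, I would invoke \lemref{useful1} directly: the $F$-linear map $f \colon P[y;D] \to A'$ sending $p y^i \mapsto p x^i$ is an injective ring homomorphism, so $\sum_i p_i x^i = 0$ in $A'$ forces $\sum_i p_i y^i = 0$ in $P[y; D]$, and the uniqueness of polynomial representations in $P[y; D]$ yields $p_i = 0$ for every $i$.

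For the assertion about $A$, use $A = R A'$ (noted in the text) together with the existence step $A' = T$, giving $A = R \cdot T = RP + RPx + RPx^2 + \ldots$ by associativity. Finally, $RP \subseteq P$ because $P$ is a subring containing $R$, and $RP \subseteq R A' = A$, so $RP \subseteq P \cap A$. The only mildly delicate point throughout is tracking that the commutation identity $x^i q = \sum \binom{i}{k} D^k(q) x^{i-k}$ produces coefficients lying in $P$ rather than in some larger subring, but this is precisely the defining property of $P$, so no real obstacle arises.
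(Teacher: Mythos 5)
Your uniqueness argument, which deduces the claim from the injectivity of the map $f$ in \lemref{useful1}, is a valid and clean shortcut; the paper instead repeats the $\gamma_t$/Vandermonde computation inline (the same computation that appears inside the proof of \lemref{useful1}). The multiplicative-closure computation via $x^i q = \sum_{k}\binom{i}{k}D^k(q)x^{i-k}$ is also correct and matches the paper's approach.

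However, the existence step contains a factual error that propagates into your treatment of the second assertion. You claim that $T = P + Px + Px^2 + \cdots$ contains $1$ and $x$, and conclude $T = A'$. Neither claim is true: $P$ is the smallest subring of $A'$ containing $R=\langle a,b\rangle$ and closed under $D$, so every element of $P$ has gradation at least one; in particular $1\notin P$ (and indeed $1\notin A'$, which is why the paper introduces $A'^1$), and since $x\notin P x^i$ for any $i$, also $x\notin T$. Thus $T\subsetneq A'$; concretely $A' = \bar{A} + F[x]$ and $T$ misses the $F[x]$ summand. What the lemma actually requires, and what your argument does deliver, is only the weaker inclusion $T\supseteq\bar{A}$: since $T$ is a subalgebra closed under left multiplication by $x$, it contains $x^i R$ and $Rx^i$ for all $i$, and these generate $\bar{A}$ — this is precisely the paper's reasoning, and you should state it that way rather than via the false equality $T=A'$. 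The error is not cosmetic for the third step: $A = R\cdot T$ does not follow from $A = RA'$ once $T\neq A'$, so the displayed derivation of $A = RP + RPx + \cdots$ collapses. Since $A\subseteq\bar{A}$ you could apply the decomposition already obtained for $\bar{A}$, or argue from $A = RA'$ and $A'\subseteq T + F[x]$, handling the $F[x]$-part (which contributes only $Rx^i$-terms) separately; in either case the route through $T=A'$ must be abandoned. The final observation $RP\subseteq P\cap A$ is fine as you stated it.
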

\begin{proof} It can be shown by induction on $n$ that $x^{n}P\subseteq P+Px+Px^{2}+\ldots =\sum_{i=0}^{\infty }Px^{i}$. Next observe that the set $P+Px+Px^{2}+\ldots $ is closed under multiplication and addition, hence it is a subring of $A'$ containing $x^{i}R$ and $Rx^{i}$ for every $i$, hence it contains $A^{(*)}$.

 Suppose now that some elements from $P+Px+Px^{2}+\ldots $ are linearly dependent over $F$; then $\sum _{i=0}^{n}p_{i}x^{i}=0$ for some $p_{i}\in P$. As $A'$ is a free algebra and $\gamma _{t}$ is a ring homomorphism for $t\in F$ 
 then  $\gamma _{t}(\sum _{i=0}^{n}p_{i}x^{i})=0$. By Lemma \ref{staly} we get $\sum _{i=0}^{n}p_{i}(x+t)^{i}=0.$
 We can write such equations for different elements $t=q_{1}, t= q_{2}, \ldots , t=q_{n+1}\in F$ and then 
  write these equations as $(p_{0}, p_{1},\ldots , p_{n})M=0$, where $M$ is a matrix with $i$-th column equal to 
$(1, (x+q_{i}), (x+q_{i})^{2}, \ldots , (x+q_{i})^{n})^{T}.$
Notice that $M$ is a transposition of  a Vandermonde matrix and the determinant of $M$ is det$(M)=\prod_{i>j}(q_{i}-q_{j})\in F$.  Therefore $M$ is invertible, with $MN=$Id for some matrix $N$ with entries in $F[x]$ (where Id denotes the identity matrix). Hence  $(p_{0}, \ldots , p_{n})M=0$ implies  $(p_{0},
 \ldots , p_{n})=(p_{o},\ldots , p_{n})MN=0$, so $p_{0}=p_{1}=\ldots =p_{n}=0$, as required.
\end{proof}

\begin{delfin}\label{subspace}
 Let $S$ be a linear subspace in $A'$. We will say that $S$ is a {\em platinum space}  if $\gamma _{q}(S)\subseteq S$ for every $q\in F$. 
\end{delfin}
\begin{delfin}
 Let $I$ be an ideal in $A'$. We will say that $I$ is a {\em platinum ideal} if  $\gamma _{q}(I)\subseteq I$ for every $q\in F$. In particular, a platinum ideal is an ideal which is a platinum subspace of $A'$. 
 
 We will  say that $A'/I$ is a {\em platinum ring} if $I$ is a platinum ideal of $A'$.   
\end{delfin}
\begin{remark}\label{before} Let $I$ be an ideal in $A'$ and let ${\bar I}=I\cap P$. 
 Observe that $\bar I$ is an ideal in $P$ and if $c\in \bar I$ then $xc-cx\in \bar {I}$ (because $xc, cx\in I$).

 Let $I$ be a platinum ideal in $A'$.  Denote $\bar {I}=I\cap P$. For $p\in P$ we define $D (p)=xp-px$.
 Then $P/\bar {I}$ is a ring with the derivation $D (c+\bar {I})=D(c)+\bar {I}$ where $D(c)=xc-cx$.
  By the elementary {\em Second Isomorphism Theorem} ring  $P/\bar {I}$ can be embedded in $A'/I$ via the mapping $h: P/\bar {I}\rightarrow A'/I$, where $h(c+\bar {I})=c+I$ for $c\in P$ (see \cite{Jordan} for some related results).

\end{remark}

\begin{theorem}\label{useful2} Let $F$ be an infinite field.  Let $I$ be a platinum ideal in $A'$ then $I\subseteq A^{(*)}$.  Denote $\bar {I}=I\cap P$.
Let $P^*=(P/\bar {I})[y; D]$ be the differential polynomial ring with $y(c+\bar {I})-(c+{\bar I})y= D (c+\bar {I})$ where $D (c+\bar {I})=xc-cx+\bar {I}$, for $c\in  P$. Then the mapping 
$f:P^{*}\rightarrow A'/I$ given by $f(p+\bar {I})=p+I$  and $f((p+\bar {I})y^{i})=px^{i}+I$ for $p\in P$, 
 is an injective homomorphism of rings; moreover  the image of $P^{*}$ equals $A^{(*)}/I$. Therefore, $P^*$ can be embedded into $A'/I$.
\end{theorem}
\begin{proof} We will first show that $I\subseteq A^{(*)}$ and hence $I$ is an ideal in $A^{(*)}$. Observe that $A'=A^{(*)}+xF[x]$, since $A'$ doesn't contain elements with constant terms. Let $i=u+v\in I$ where $u\in A^{(*)}$ and $v\in xF[x]$. We will show that $v=0$; suppose on the contrary that $v\neq 0$. Notice that since $F$ is an infinite field then $\gamma _{t}(v)$ contains a non-zero constant term from $F$  for some $t\in F$. On the other hand $A'$ doesn't contain any elements with 
non-zero constant terms, hence $\gamma _{t}(v)\notin A'$. Therefore $\gamma _{t}(i)=\gamma _{t}(u+v)\notin A'$, a contradiction since $I$ is a platinum ideal. 

 We will now show that the image of $P^{*}$ is $A^{(*)}/I$.  Notice that $P^{*}=(P/{\bar I})+(P/\bar {I})y+(P/\bar {I})y^{2}\ldots =\sum_{i=0}^{\infty }(P/\bar {I})y^{i}.$  Consequently by the definition of mapping $f$  the image of
 $P^{*}$ in $A'/I$ equals $(P+I)+(Px+I)+\ldots =\sum _{i=0}^{\infty }Px^{i}+I$.   Clearly $P+Px+\ldots \subseteq A^{(*)}$, since $A^{(*)}$ equals the ideal of $A'$ generated by $a$ and $b$. 
 By Lemma \ref{writing} we have  $A^{(*)}=\sum_{i=0}^{\infty }Px^{i}$, hence $A^{(*)}/I=$Im$(P^{*})$.

 We will now show that $f$ is an injective homomorphism of rings. By Remark \ref{before}, $f$ is well defined as a ring homomorphism on $P/\bar {I}$. 
Every element of $P^{*}$ can be uniquely written as a linear combination of elements $py^{i}$ with $p\in P$, $i\geq 0$, so $f$ is well defined as a linear mapping on $P^{*}$.
 To check that $f$ is a ring homomorphism we proceed similarly as in Lemma \ref{useful}.

We need to show that the kernel of $f$ is zero. Suppose that $f(\sum_{i=0}^{n}(c_{i}+\bar {I})y^{i})=0$ where 
 $c_{i}+\bar {I}\in P/\bar {I},$ $c_{i}\in P$. By the definition of $f$ we have  $\sum_{i=0}^{n}(c_{i}x^{i}+I)=0+I$ in $A'/I$, hence $\sum_{i=0}^{n}c_{i}x^{i}\in I$.
 Since $I$ is a platinum ideal we get that  $\gamma _{q}(\sum_{i=0}^{n}c_{i}x^{i})\in I$ for every $q\in F$. 
 By Lemma \ref{p}, that implies $\sum_{i=0}^{n}c_{i}(x+q)^{i}\in I$.
 Write such equations for different elements $q_{1}, q_{2}, \ldots , q_{n+1}\in F$, and then write them as $(c_{0}, c_{1}, \ldots ,c_{n+1})M=Q$ where $M$ is a matrix with $i$-th column equal to 
$(1, (x+q_{i}), (x+q_{i})^{2}, \ldots , (x+q_{i})^{n} )^{T}$ and $Q$ is a vector with all entries from $I$.
 Observe that $M$ is a transposition of a  Vandermonde matrix and hence the determinant of $M$ is 
det$(M)=\prod_{i>j}(q_{i}-q_{j})\in F$. Therefore, $M$ is invertible, with $MN=$Id for some matrix $N$ with entries in $F[x]$ (where Id is the identity matrix).
 Hence $(c_{0}, c_{1}, \ldots , c_{n})M=Q$ implies $(c_{0}, c_{1}, \ldots , c_{n})=QN$. 
Since $QN$ is a vector with all entries in $I,$ then $c_{0}, c_{1}, \ldots ,c_{n}\in I$.
 Since $c_{i}\in P$ then $c_{i}\in P\cap I=\bar {I}$ for $i=0, 1, 2, \ldots , n$. Therefore $c_{i}+\bar {I}=0+\bar {I}$ for every $i\leq n+1$, and so $\sum_{i=0}^{n}(c_{i}+\bar {I})y^{i}=0$, as required.   
\end{proof}  

{\bf Proof of Theorem \ref{inny100}}.
 Notice that Theorem \ref{useful2} is a special case of Theorem \ref{inny100} for $a_{1}=a$ and $a_{2}=b$. 
 Observe that  the number of generators of $A$
 doesn't influence the proof of Theorem \ref{useful2}, so the proof of Theorem \ref{inny100} is the same as the proof of Theorem \ref{useful2}.

\begin{delfin} For an element $r\in A'$ we define $L(r)$=span$_{F}\{\gamma _{t}(r):t\in F\}$.
 Given a linear space $S\subseteq A'$ we define $L(S)=$span$_{F}\{L(r):r\in S\}$.  
 Note that $L(S)$  is the linear space spanned by all elements $\gamma _{t}(s)$ for $t\in F$, $s\in S$. 
\end{delfin}
\begin{lemma}\label{jeden} Let $S$ be a linear space, then $L(S)$ is the smallest platinum space containing $S$.
\end{lemma}
\begin{proof}
 If $S\subseteq Q$ and $Q$ is a platinum space then $\gamma _{t}(S)\subseteq Q$ for every $t\in F$. Therefore $L(S)\subseteq Q$. 
 We need to show that $L(S)$ is a platinum space. Let $s\in L(S)$, then $s=\sum _{t\in W }\gamma _{t}(s_{t})$ where $W$ is a finite subset of $F$ and $s_{t}\in S$.
 Let $k\in F$, then $\gamma _{k}(s)=\sum _{t\in W}\gamma _{k}(\gamma _{t}(s_{t}))=\sum _{t\in W}\gamma _{k+t}(s_{t})\in L(S)$.
\end{proof}
\begin{lemma}\label{two}
Let $S\subseteq A'$ be a platinum space, then $S\subseteq A^{(*)}$. Suppose that $sx\in S$  for every $s\in S$. Then $S= S'+S'x+S'x^{2}+\ldots $ where $S'=P\cap S$. 
\end{lemma}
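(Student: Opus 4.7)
The easy inclusion $S'+S'x+S'x^{2}+\ldots\subseteq S$ is immediate from the hypotheses: $S'\subseteq S$ by definition, and since $sx\in S$ for every $s\in S$, induction gives $s'x^{k}\in S$ for every $s'\in S'$ and $k\geq 0$.

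For the reverse inclusion, the plan is to mimic the Vandermonde extraction already used in Lemmas \ref{writing} and \ref{useful2}. Take $s\in S$ and apply Lemma \ref{writing} to write
\[
s=\sum_{i=0}^{n} p_{i}\, x^{i}
\]
uniquely with $p_{i}\in P$. (In the applications $S$ lies in $\bar A$; should one need to allow an $F[x]$-component, the decomposition $A'=\bar A\oplus F[x]$ combined with $P\cap F[x]=0$ handles it immediately.) By Lemma \ref{staly} each $p_{i}$ is fixed by every $\gamma_{q}$, so for each $q\in F$,
\[
\gamma_{q}(s)=\sum_{i=0}^{n} p_{i}(x+q)^{i}=\sum_{k=0}^{n} q^{k}\, T_{k}\in S,
\qquad T_{k}:=\sum_{i\geq k}\binom{i}{k}\, p_{i}\, x^{i-k}.
\]
Choosing $n+1$ distinct scalars $q_{0},\ldots,q_{n}\in F$, the resulting $n+1$ relations form a linear system whose coefficient matrix $(q_{j}^{k})$ is a Vandermonde matrix over $F$, hence invertible; inverting expresses each $T_{k}$ as an $F$-linear combination of the $\gamma_{q_{j}}(s)\in S$, and therefore $T_{k}\in S$.

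The final step is a downward induction on $k$ to peel off $p_{k}$ from $T_{k}$, and this is where the hypothesis $Sx\subseteq S$ plays its essential role. At the top, $T_{n}=p_{n}\in S\cap P=S'$. Assuming inductively that $p_{k+1},\ldots,p_{n}\in S'$, each summand $p_{i}x^{i-k}$ with $i>k$ lies in $S$ by iterating $Sx\subseteq S$, so
\[
p_{k}=T_{k}-\sum_{i>k}\binom{i}{k}\, p_{i}\, x^{i-k}\in S,
\]
and $p_{k}\in P$ then gives $p_{k}\in S'$. Hence $s\in S'+S'x+\cdots+S'x^{n}$, as required. The one ingredient one has to trust is the existence of enough distinct scalars in $F$ to run the Vandermonde inversion, but this is automatic here since $F$ is the algebraic closure of a finite field, hence infinite.
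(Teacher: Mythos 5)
Your proof is correct and rests on the same ingredients as the paper's: Lemma~\ref{writing} for the $P$-decomposition, Lemma~\ref{staly} together with the platinum property, a Vandermonde inversion over the infinite field $F$, and the closure $Sx\subseteq S$ to absorb the $x$-tails. The only difference is organizational --- the paper runs an induction on the degree $n$, extracting a single constant coefficient by Vandermonde at each stage, whereas you invert one Vandermonde system to land all the $T_{k}$ in $S$ at once and then strip off the $p_{k}$ by a downward induction --- which is a slightly more direct packaging of the same argument rather than a genuinely different route (and your remark that the case $S\subseteq\bar A$ is the relevant one, with $P\cap F[x]=0$ covering the general $A'$ setting, is a correct reading of the implicit hypothesis).
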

\begin{proof} The proof that $S\subseteq A^{(*)}$ is the same as the proof that 
 $I\subseteq A^{(*)}$ in Theorem \ref{useful2}.
 Observe first that $S'x^{i}\subseteq S$ for every $i$. We will show that $S\subseteq S'+S'x+S'x^{2}+\ldots =\sum_{i=0}^{\infty }S'x^{i}.$ Let $r\in S$. By Lemma \ref{writing} we have  $r=\sum_{i=0}^{n}p_{i}x^{i}\in S$ for some $p_{i}\in P$. We will proceed by induction on $n$. If $n=0$  then 
 $r=p_{0}\in P\cap S$, as required. Suppose now that $n>0$ and  the result holds for all numbers smaller than $n$, and we will show that it holds for $n$. 
  Because $A$ is a platinum subspace, for every $\alpha \in F$ we have
$\sum_{i=0}^{n }p_{i}(x+\alpha )^{i}\in S$. 
Let $d=\sum_{i=0}^{n}p_{i}(x+\alpha )^{i}-r,$ then $d\in S$.
 Observe that $d=\sum_{i=0}^{n}(p_{i}(x+\alpha )^{i}-p_{i}x^{i})=\sum_{i=0}^{n-1}d_{i}x^{i}$ for some $d_{i}\in P$. By the inductive assumption $d_{0}\in S$, hence  
$\sum_{i=1}^{n}p_{i}\alpha ^{i}=d_{0}\in P\cap S$.
 This holds for every $\alpha \in F$.
By the Vandermonde matrix argument, we get $p_{1}, \ldots , p_{n}\in P\cap S$. Moreover,  $p_{0}=r-\sum_{i=1}^{n}p_{i}x^{i}\in  S$ and since $p_{0}\in P$ then  $p_{0}\in P\cap S$. 
\end{proof}  
 


\section{ Linear mappings $f$  and  $G$}

Let $A^{*}$ be the subalgebra of $A$ generated by elements $ax^{i}ax^{j}$ and $bx^{i}bx^{j}$ for all $i,j\geq 0$.
 Notice that the notation $A^{*}$ is distinct from the notation $A^{(*)}$, denoting different objects.
 
 Let $B'\subseteq A(2)$ be the linear $F$- space spanned by elements $ax^{i}bx^{j}$ and $bx^{i}ax^{j}$ for all $i,j\geq 0$.
 Let $B=\sum_{i=0}^{\infty }A(2i)B'A$. Observe that $A=A^{*}+B$ and $A^{*}\cap B=0$.

 By $F[x]$ we will denote the polynomial ring in variable $x$ over $F$.  Given a linear mapping $f$ by $\ker (f)$ and Im $(f)$ we denote the kernel and the image of $f$.

\begin{lemma}\label{one} Let $m$ be an even number and let $S\subseteq A^{*}(m)$ be a platinum space such that $sx\in S$ if $s\in S$. Then there is a linear mapping  
 $f:A^{*}(m)\rightarrow A^{*}(m)$ such that 
\begin{itemize}
\item [1.] $\ker (f)=S$,
\item [2.] $f(px^{i})=f(p)x^{i}$ for every $p\in A^{*}(m)\cap P$,
\item [3.] $f(p)\in P$ for every $p\in A^{*}(m)\cap P$.
\item [4.] Moreover, for every $s\in A^{*}(m)$ and  $t\in F$, 
\[f(\gamma _{t}(s))=\gamma _{t}(f(s)).\] 
\item[5.] There is a linear space $E\subseteq A^{*}(m)$ such that 
 $f(r)=r$ for $r\in E$, and $E\oplus S=A^{*}(m).$ Moreover Im $(f)\oplus  \ker (f)=A^{*}(m).$
\end{itemize}
\end{lemma}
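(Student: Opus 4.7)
The plan is to reduce the construction to a linear-algebra projection on $P\cap A^{*}(m)$ and then propagate it to all of $A^{*}(m)$ using the polynomial-in-$x$ decomposition supplied by Lemma~\ref{two}.

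First I would check that Lemma~\ref{two} applies to both $A^{*}(m)$ and $S$. The subalgebra $A^{*}$ is $\gamma_{t}$-invariant, since $\gamma_{t}(ax^{i}ax^{j})=a(x+t)^{i}a(x+t)^{j}$ expands as a polynomial in generators of the form $ax^{k}ax^{l}$ (and similarly for $b$), hence $A^{*}(m)$ is a platinum space. It is stable under right multiplication by $x$ because for every generator $(ax^{i}ax^{j})\cdot x=ax^{i}ax^{j+1}$ is again a generator (likewise for $b$), and the property passes to products; the hypothesis supplies the same stability for $S$. Applying Lemma~\ref{two} and setting $V:=P\cap A^{*}(m)$ and $S':=P\cap S\subseteq V$, I obtain the direct-sum decompositions
\[
A^{*}(m)=V\oplus Vx\oplus Vx^{2}\oplus\cdots,\qquad S=S'\oplus S'x\oplus S'x^{2}\oplus\cdots .
\]

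Next I would choose an $F$-linear complement $E'$ to $S'$ inside $V$, so that $V=E'\oplus S'$, and let $\pi\colon V\to V$ be the projection with image $E'$ and kernel $S'$. Define
\[
f\Bigl(\sum_{i\geq 0}p_{i}x^{i}\Bigr)\;=\;\sum_{i\geq 0}\pi(p_{i})\,x^{i}, \qquad p_{i}\in V,
\]
which is well defined by uniqueness of the decomposition. Items 2 and 3 are built into the definition. Item 1 follows because $f(s)=0$ iff every $\pi(p_{i})=0$ iff every $p_{i}\in S'$ iff $s\in S$. For item 5 I would set $E:=E'\oplus E'x\oplus E'x^{2}\oplus\cdots\subseteq A^{*}(m)$; then $f|_{E}=\mathrm{id}_{E}$, the splitting $V=E'\oplus S'$ lifts coefficient-wise to $A^{*}(m)=E\oplus S$, and evidently $\mathrm{Im}(f)=E$ while $\ker(f)=S$.

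The only point needing a direct calculation is item 4. For $s=\sum_{i}p_{i}x^{i}$ with $p_{i}\in V$, Lemma~\ref{staly} gives $\gamma_{t}(s)=\sum_{i}p_{i}(x+t)^{i}=\sum_{k}q_{k}x^{k}$ where $q_{k}=\sum_{i\geq k}\binom{i}{k}t^{i-k}p_{i}\in V$; this is the unique $V$-coefficient decomposition of $\gamma_{t}(s)$. Applying $f$ and using $F$-linearity of $\pi$ yields $f(\gamma_{t}(s))=\sum_{k}\pi(q_{k})x^{k}=\sum_{k}\bigl(\sum_{i}\binom{i}{k}t^{i-k}\pi(p_{i})\bigr)x^{k}$, which equals $\gamma_{t}(f(s))=\sum_{i}\pi(p_{i})(x+t)^{i}$ after the same expansion. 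There is no substantive obstacle; the only subtlety is verifying the stability of $A^{*}$ under $\gamma_{t}$ and under right multiplication by $x$ so that Lemma~\ref{two} may be invoked, after which everything reduces to linear algebra over $F$ together with the uniqueness of the decomposition in $\bar{A}$.
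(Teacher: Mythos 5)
Your proposal is correct and follows essentially the same route as the paper: decompose $A^{*}(m)$ and $S$ via Lemma~\ref{two}, choose a complement to $S'=P\cap S$ inside $P\cap A^{*}(m)$, and push the resulting projection coefficient-wise using the unique decomposition of Lemma~\ref{writing}. The paper uses Zorn's lemma to produce the complement (your $E'$ is its $Q$), and verifies (4) via the same $p_{i}(x+t)^{i}$ expansion. Your write-up is slightly more careful in two respects that the paper glosses over: you actually check that $A^{*}(m)$ satisfies the hypotheses of Lemma~\ref{two} ($\gamma_{t}$-stability and right-$x$-stability), and you verify (4) for arbitrary $s\in A^{*}(m)$ rather than only for $s\in S$ as the paper nominally does, which is what the statement of (4) requires. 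No gaps.
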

\begin{proof} By Lemma \ref{two}, $S=S'+S'x+\ldots $ where $S'=S\cap P\subseteq A^{*}(m)$. By Zorn's lemma, there exists a maximal linear subspace $Q$ of $A^{*}(m)\cap P$ such that 
 $S'\cap Q=0$.  Observe that then $Q+S'=A^{*}(m)\cap P$ and that $Q$ is a platinum space (as every subspace of $P$ is a platinum space, by Lemma \ref{staly}). Define $f(r)=0$ for $r\in S'$ and  $f(r)=r$ for $r\in Q$. Observe that $A^{*}(m)=\sum _{i=0}^{\infty }(A^{*}(m)\cap P)x^{i}$ by Lemma \ref{two}.
 Define $f(px^{i})=px^{i}$ for $p\in A^{*}(m)\cap P$.  
 By Lemma \ref{writing}, $f$ is a well defined linear mapping.  Notice that $E=Q+Qx+Qx^{2}+\ldots $ satisfies $(5)$. Observe that $f(s)=0$ for every $s\in S$. If $r=r_{1}+r_{2}$ with $r_{i}\in S$ and $r_{2}\in Q+Qx+Qx^{2}+\ldots $ then $f(r)=r_{2}$, so the kernel of $f$ equals $S$.  The image of $f$ is $E$, so (5) holds.

 We will now show that $f(\gamma _{t}(s))=\gamma _{t}(f(s))$.  
Let $s\in S$; then $s=\sum_{i} p_{i}x^{i}$ for some $p_{i}\in S'$.
Then $f(s)=\sum _{i}f(p_{i})x^{i}$. 
 Since $S$ is a platinum space then 
$\gamma _{t}(s)\in S$ for any $t\in F$. 
 Observe that by the definition of $\gamma _{t}$  we get 
\[\gamma _{t}(s)=\sum _{i} p_{i}(x+t)^{i},\]
 since $\gamma _{t}(p)=p$ for $p\in P$ by Lemma \ref{staly}.
 Therefore
$f(\gamma _{t}(s))=\sum _{i}f(p_{i})(x+t)^{i}$.
  
Observe now that \[\gamma _{t}(f(s))=\gamma _{t}(\sum _{i}f(p_{i})x^{i})=\sum_{i} f(p_{i}) (x+t)^{i}\] (by Lemma \ref{staly}, since $f(p_{i})\in P$).
 It follows that
$f(\gamma _{t}(s))=\gamma _{t}(f(s))$.
\end{proof}

\begin{remark} Notice that the fifth statement of Lemma \ref{one} can be also formulated by saying that the short exact sequence 
$0\rightarrow S\rightarrow A^{*}(m)\rightarrow E\rightarrow 0$ induced by $f: A^{*}(m)\rightarrow $ Im$(f)=E$ is split  by the inclusion map section $E\rightarrow A^{*}(m)$. 
\end{remark}
For a matrix $M$, let $S(M)$ be the linear space spanned by all entries of $M$, and $L(M)$ be the linear space spanned by all  matrices $\gamma _{t}(M)$ for $t\in F$ ( where if $M$ has entries $m_{i,j}$ then 
$\gamma _{t}(M)$ has respective entries $\gamma _{t}(m_{i,j})$). Observe that $L(S(M))=S(L(M))$. 

\begin{delfin}\label{second} (Definition of mapping $G$)
 Let $m$ be an even number and let $f:A^{*}(m)\rightarrow A^{*}(m)$ be a linear mapping satisfying properties (1)--(5) from Lemma \ref{one} (for some platinum space $S$).
 Define a linear mapping $G: A^{*}(10m)\rightarrow A^{*}(10m)$ as follows:

 If $v_{1}, \ldots , v_{10}\in A^{*}(m)$ are monomials (products of generators)  and  
  $v=v_{1}v_{2}\ldots v_{10},$ then we define   

\[G(v)=G(v_{1}\ldots v_{10})=v_{1}v_{2}\ldots v_{9}f(v_{10}).\]

 We can extend the mapping $G$ by linearity to all elements of $A^{*}(10m)$.

For every natural number $j>0$ we  extend  the mapping $G$ to the linear mapping $G:A^{*}(j\cdot 10m)\rightarrow A^ {*}(j\cdot 10m)$ in the following way:
 if $w=w_{1}\ldots w_{j}$ where $w_{i}$ are monomials and $w_{i}\in A^{*}(10m)$, 
 then  we define
\[G(w)=G(w_{1})G(w_{2})\ldots G(w_{j}).\]
 We can then  extend the mapping $G$ by linearity to all elements of $A^{*}(j\cdot 10m)$.

 Moreover, we  can also  extend the mapping $G$ to matrices with entries in $A^{*}(j\cdot 10m)$,
 so if $M$ has entries $a_{i,j}$ then $G(M)$ has respectively entries $G(a_{i,j})$. In similar fashion we can extend the mappings $f$ and $\gamma _{t}$ to matrices.
\end{delfin}
\begin{lemma}\label{G5} Let $m, G$ be as in the definition of the mapping $G$ above. Let $m_{i}, n$ be natural numbers such that  $n$ divides $m$, and $10m$ divides $m_{i}$. Let $M_{i}$ be a matrix with entries in $A^{*}(n)$;
 then 
\[L(S(G({M_{i}}^{m_{i}\over n})))=G(L(S({M_{i}}^{m_{i}\over n}))).\]
\end{lemma}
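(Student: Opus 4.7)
The plan is to reduce the claimed identity to the single commutation
\[
\gamma_t \circ G \;=\; G \circ \gamma_t \quad \text{on } A^*(j \cdot 10m), \qquad t \in F,\; j \geq 1.
\]
Granting this, the lemma follows quickly: $G$ acts entrywise on matrices, so $S(G(N)) = G(S(N))$ for any matrix $N$ whose entries lie in the domain of $G$; then, applying $\gamma_t$ for each $t\in F$ and taking spans,
\[
L(S(G(N))) = \mathrm{span}\{\gamma_t(G(s)) : s \in S(N),\; t \in F\} = G(L(S(N))),
\]
where the second equality uses linearity of $G$ together with the commutation to pull $G$ outside. Specialising to $N = M_i^{m_i / n}$ (whose entries lie in $A(m_i)$, with $10m \mid m_i$, so that $G$ applies) yields the statement.

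It therefore remains to establish the commutation, and the base case $j = 1$ is the substantive one. I would pick a monomial $v = v_1 \cdots v_{10}$ with $v_i \in A^*(m)$; because every monomial of $A^*$ factors uniquely as a product of the generators $ax^{\alpha}ax^{\beta}$, $bx^{\alpha}bx^{\beta}$, any monomial of $A^*(10m)$ factors uniquely into ten pieces of $A^*(m)$ (split after every $m/2$ generators). Since $\gamma_t$ is a ring homomorphism that fixes $a$ and $b$ and preserves the gradation, $\gamma_t(v) = \gamma_t(v_1) \cdots \gamma_t(v_{10})$ with each $\gamma_t(v_i)\in A^*(m)$. Expanding each $\gamma_t(v_i)$ as a sum of monomials of $A^*(m)$, uniqueness of factorisation lets me apply $G$ monomial-by-monomial; by linearity of $G$ and of $f$ the sum reassembles to
\[
G(\gamma_t(v)) = \gamma_t(v_1) \cdots \gamma_t(v_9) \cdot f\bigl(\gamma_t(v_{10})\bigr).
\]
Now property~(4) of Lemma~\ref{one} converts $f(\gamma_t(v_{10}))$ into $\gamma_t(f(v_{10}))$, and the homomorphism property of $\gamma_t$ collapses the right-hand side to $\gamma_t(v_1 \cdots v_9\, f(v_{10})) = \gamma_t(G(v))$. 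Extending by linearity settles the case $j=1$. The inductive step for $j>1$ is formal: the definition $G(w_1\cdots w_j) = G(w_1)\cdots G(w_j)$ on monomial factors $w_i \in A^*(10m)$, combined with the base case applied to each $w_i$ and the homomorphism property of $\gamma_t$, gives $G(\gamma_t(w_1\cdots w_j)) = \gamma_t(G(w_1\cdots w_j))$.

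The step I expect to be most delicate is the verification just sketched: $G$ is defined via a specific choice of factorisation of a monomial, yet $\gamma_t$ produces a sum of monomials whose $x$-exponents generally differ from those of $v$, and one must check that $G$ still behaves coherently on the expanded sum. What rescues this is the uniqueness of the $A^*(m)$-factorisation of $A^*(10m)$-monomials together with the observation that $\gamma_t$ shifts $x$-variables only inside the fixed positional pattern determined by $v_1,\ldots,v_{10}$; every monomial appearing in $\gamma_t(v_1)\cdots \gamma_t(v_{10})$ therefore factorises through precisely the same ten slots, so applying $G$ term-by-term collapses the sum cleanly into $\gamma_t(v_1)\cdots \gamma_t(v_9)\cdot f(\gamma_t(v_{10}))$.
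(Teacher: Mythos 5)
Your proof is correct and follows essentially the same route as the paper's: both reduce the lemma to the commutation of $\gamma_t$ with $G$ on degree-$10m$ blocks, invoke property~(4) of Lemma~\ref{one} (namely $f\circ\gamma_t=\gamma_t\circ f$) in the base case, and then extend to higher blocks via the multiplicative definition of $G$. The only real difference is stylistic: the paper works compactly at the matrix level, writing $G(\gamma_t(M^{10}))=G(\gamma_t(M^9)\gamma_t(M))=(\gamma_t(M))^9 f(\gamma_t(M))$ without spelling out why $G$ (which is defined monomial-by-monomial through a chosen factorisation) behaves coherently when entries expand into sums of monomials under $\gamma_t$, whereas you make this point explicit via the uniqueness of $A^*(m)$-factorisation of $A^*(10m)$-monomials. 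That added care is not wasted — it is the genuinely delicate point you correctly flag — but it confirms the same underlying calculation rather than offering a different one.
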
 
\begin{proof} Recall that $S(M)$ denotes the linear space spanned by entries of matrix $M$, hence $S(L(M))=L(S(M))$ and $S(G(M))=G(S(M))$ for any matrix $M$ with entries in $C$.
 Consequently it is sufficient to show that $L(G({M_{i}}^{m_{i}\over n}))=G(L({M_{i}}^{m_{i}\over n}))$.
 We will first show that 
$G(L(M^{10}))=L(G(M^{10}))$ for any $M$ with entries in $A^{*}(m)$.
 Let $t\in F$, then $G(\gamma _{t}(M^{10}))=G(\gamma _{t}(M^{9})\gamma _{t}(M))=(\gamma _{t}(M))^{9}f(\gamma _{t}(M))$. 
 By assertion (4) from Lemma \ref{one} we have $f(\gamma _{t}(M))=\gamma _{t} (f(M)).$
 Therefore $G(\gamma _{t}(M^{10}))=\gamma _{t}(M^{9})f(\gamma _{t}(M))=\gamma _{t}(M^{9})\gamma _{t}(f(M))=\gamma _{t}(G(M^{10}))$.  Consequently $G(L(M^{10}))=L(G(M^{10}))$, as required.  

By the definition of $G$, for the same $M$, and for any $t\in F$, and any number $k$, 
\[G(\gamma _{t}(M^{10k}))=G(\gamma _{t}(M^{10}))^{k}=\gamma _{t}(G(M^{10}))^{k}=\gamma _{t} (G(M^{10k})).\]
  Therefore, $G(L(M^{10k})=L(G(M^{10k}))$. 
The result now follows when we take $M=M_{i}^{m\over n}$ and $k={m_{i}\over 10m}$ and substitute in the above equation.
\end{proof}

\begin{lemma}\label{pierwszy}
  Let $M$ be a finite matrix with entries in $A(j)\cap R$ for some $j$. For almost all $n$ the dimension of the space $R\cap S(L(M^{n}))=\bar {S}(M^{n})$ is smaller than $\sqrt n$. Notice also that since the entries of $M$ are taken from $R$ which is $\gamma _{t}$ invariant for all $t\in F$ then the space $S(L(M^{n}))$ is finite dimensional for every $n$.
\end{lemma}
\begin{proof}
  Since all entries of $M$ are in $R$ then $S(L(M^{n}))=S(M^{n}).$ 
Let $M$ be an $m$ by $m$ matrix, then the dimension of $\bar {S}(M^{n})$ is at most $m^{2}$, which for sufficiently large $n$ is smaller than $\sqrt n $. 
\end{proof}



\section{Supporting lemmas}

 Let $n, m_{1}$ be natural numbers such that $20n$ divides $m_{1}$. Let $M_{1}$ be a matrix with entries in $A^{*}(n)$.
 Let $f$ be a mapping satysfying properties (1)--(5) from Lemma \ref{one} for $m=2m_{1}$ and for the space $S=S(L(M_{1}^{{m_{1}\over n}}))A^{*}(m_{1}).$
 We can then define  mapping $G: A^{*}(j\cdot 10\cdot 2m_{1})\rightarrow A^{*}(j\cdot 10\cdot 2m_{1})$ as in the previous section (for every $j$).

 
In the next three lemmas we will use the following notation. 
 Let $m$ be a natural number and let $V\subseteq A^{*}(m)$ be a linear space.
Denote  
\[E(V, m)=\sum_{j=1}^{\infty } A^{*}(j\cdot 20m-2m)VA^{*}(m){A^{*}}^{1}.\]

We begin with the following lemmas:

\begin{lemma}\label{ker} 
  Let $j, n, m_{1}$ be natural numbers such that $20n$ divides $m_{1}$. Let $M_{1}$ be a matrix with coefficients in $A^{*}(n)$.
 Let $G$ be defined as  at the beginning of this section.
 Then the kernel of $G:A^{*}(j\cdot 20m_{1})\rightarrow A^{*}(j\cdot 20m_{1})$ is equal to $E(V, m_{1})\cap A^{*}(j\cdot 20 m_{1})$, where $V=S(L(M_{1}^{m_{1}\over n}))$.
\end{lemma}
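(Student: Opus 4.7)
The plan is to exploit that $A^{*}$ is free as an $F$-algebra on the generating set $Y=\{ax^{i}ax^{j},\,bx^{i}bx^{j}:i,j\ge 0\}$, and to realise $G$ as a tensor power of an idempotent built from $f$. Each element of $Y$ is a distinguished word in the free algebra $A'$, and distinct products of elements of $Y$ produce distinct words in $A'$ (the factorisation can be recovered by pairing up consecutive occurrences of $a$'s and $b$'s); hence $A^{*}\cong F\langle Y\rangle$. Consequently $A^{*}(2k)$ has a basis of length-$k$ words in $Y$, and concatenation gives a canonical linear isomorphism $A^{*}(2i)\otimes_{F}A^{*}(2k)\cong A^{*}(2(i+k))$. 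Iterating, $A^{*}(20m_{1})\cong A^{*}(18m_{1})\otimes_{F}A^{*}(2m_{1})$ and $A^{*}(j\cdot 20m_{1})\cong A^{*}(20m_{1})^{\otimes j}$.

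Under these identifications, I would unwind the definition to verify that $G|_{A^{*}(20m_{1})}=\mathrm{id}_{A^{*}(18m_{1})}\otimes f$: for a monomial $w=v_{1}\cdots v_{10}$ with $v_{i}\in A^{*}(2m_{1})$ the definition gives $G(w)=(v_{1}\cdots v_{9})f(v_{10})$. By Lemma~\ref{one}(1), $\ker f=V\cdot A^{*}(m_{1})$ with $V=S(L(M_{1}^{m_{1}/n}))$, so $\ker\bigl(G|_{A^{*}(20m_{1})}\bigr)=A^{*}(18m_{1})\cdot V\cdot A^{*}(m_{1})$. Lemma~\ref{one}(5) says $f$ restricts to the identity on a linear complement $E$ of $\ker f$ in $A^{*}(2m_{1})$, so $f$ (and hence $G|_{A^{*}(20m_{1})}$) is an idempotent whose image has complement equal to its kernel.

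For general $j$, the identity $G(w_{1}\cdots w_{j})=G(w_{1})\cdots G(w_{j})$ for monomials $w_{i}\in A^{*}(20m_{1})$ shows that $G$ corresponds to the $j$-fold tensor power $\bigl(G|_{A^{*}(20m_{1})}\bigr)^{\otimes j}$. For any idempotent $T$ on $V_{1}=P\oplus K$ with $T|_{P}=\mathrm{id}$ and $T|_{K}=0$, one has $V_{1}^{\otimes j}=\bigoplus_{I\subseteq\{1,\ldots,j\}}W_{1}^{I}\otimes\cdots\otimes W_{j}^{I}$ with $W_{i}^{I}=P$ for $i\notin I$ and $W_{i}^{I}=K$ for $i\in I$, and $T^{\otimes j}$ acts as $\mathrm{id}$ on the $I=\emptyset$ summand and as $0$ on every other summand; in particular $\ker(T^{\otimes j})=\sum_{i=1}^{j}V_{1}^{\otimes(i-1)}\otimes K\otimes V_{1}^{\otimes(j-i)}$. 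Translating this back through the multiplication isomorphism yields
\[
\ker\bigl(G|_{A^{*}(j\cdot 20m_{1})}\bigr)=\sum_{i=1}^{j}A^{*}(i\cdot 20m_{1}-2m_{1})\cdot V\cdot A^{*}(m_{1})\cdot A^{*}((j-i)\cdot 20m_{1}).
\]

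To conclude I would match this with $E(V,m_{1})\cap A^{*}(j\cdot 20m_{1})$: since $V\subseteq A^{*}(m_{1})$, the $k$-th summand $A^{*}(k\cdot 20m_{1}-2m_{1})\,V\,A^{*}(m_{1})\,(A^{*})^{1}$ of $E(V,m_{1})$ has minimum gradation $k\cdot 20m_{1}$, so only $k\le j$ contributes at total gradation $j\cdot 20m_{1}$, and the trailing $(A^{*})^{1}$-factor must have gradation $(j-k)\cdot 20m_{1}$, which equals $F$ for $k=j$ and $A^{*}((j-k)\cdot 20m_{1})$ for $k<j$. The resulting sum coincides with the one above. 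The main obstacle I expect is justifying the tensor identification $A^{*}(j\cdot 20m_{1})\cong A^{*}(20m_{1})^{\otimes j}$ rigorously from the freeness of $A^{*}$ on $Y$; once this is in place, the kernel computation reduces to a routine fact about tensor powers of split idempotents.
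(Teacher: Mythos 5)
Your proof is correct and takes essentially the same route as the paper's: both rely on the splitting $A^{*}(2m_{1})=E\oplus VA^{*}(m_{1})$ supplied by Lemma~\ref{one}(5) and propagate it multiplicatively to $A^{*}(j\cdot 20m_{1})$. The paper phrases this via the subspaces $D=\prod_{i=1}^{j}A^{*}(18m_{1})E$ and $T_{i}=A^{*}(i\cdot 20m_{1}-2m_{1})\,V\,A^{*}((j-i)\cdot 20m_{1}+m_{1})$, which is exactly your tensor-power-of-a-split-idempotent argument made concrete; your freeness-of-$A^{*}$-on-$Y$ observation just makes the implicit identification $A^{*}(j\cdot 20m_{1})\cong A^{*}(20m_{1})^{\otimes j}$ rigorous.
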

\begin{proof}  By assertion (5) from Lemma \ref{one} applied for $m=2m_{1}$ and $S=VA^{*}(m_{1}),$ there is 
 a linear space $E\subseteq A^{*}(2m_{1})$ and such that 
 $f(r)=r$ for $r\in E$ and $E\oplus S=A^{*}(2m_{1}).$ For $i=1,2, \ldots , j$, let $D=\prod_{i=1}^{j} A^{*}( 18m_{1})E$ and let \[T_{i}=A^{*}(i\cdot 20m_{1}-2m_{1})\cdot V\cdot A^{*}((j-i)\cdot 20m_{1}+m_{1}).\] 
 Observe that $D+\sum _{i=1}^{j} T_{i}=A^{*}(j\cdot 20m_{1})$.
If $r\in T_{i}$ for some $i$, then $G(r)=0$ by the definition of $G$.
 Observe that  $E(V, m_{1})\cap A^{*}(j\cdot 20 m_{1})=\sum _{i=1}^{j}T_{i}$, hence  $E(V, m_{1})\cap A^{*}(j\cdot 20 m_{1})$ is contained in the kernel of $G$.

Let $r$ be in the kernel of $f$. Write $r=t+d$, where $t\in \sum _{i=1}^{j}T_{i}$ and $d\in D$, then 
 by the definition of $G$,
$G(r)=G(t+d)=G(t)+G(d)=G(d)=d$. Recall that $r$ is in the kernel of $f$, so $d=0$ and hence $r\in \sum _{i=1}^{j}T_{i}.$
 It follows that the kernel of $G$ equals $E(V, m_{1})\cap A^{*}(j\cdot 20 m_{1})$.
\end{proof}
 
\begin{lemma}\label{pomocniczy}  
 Let $n, k$ be natural numbers with $n$ even,  and let  $m_{1}<m_{2}<\ldots<m_{k}$ be such that $20n m_{i}$ divides $m_{i+1}$ for all $1\leq i < k$ and $20n$ divides  $m_{1}$. Let $M_{i}$ be matrices with entries in $A^{*}( n)$ and let $G: A^{*}(j\cdot 20m_{1})\rightarrow A^{*}(j\cdot 20m_{1})$ be defined as at the beginning of this section, and let $j={m_{k}\over m_{1}}$. 
 Let $u\in A^{*}(20m_{k})$,  and denote $V_{i}=L(S(M_{i}^{m_{i}/n}))$.  
 Then $u\in \sum_{i=1}^{k }E(V_{i}, m_{i})$ if and only if  
   $G(u)\in \sum_{i=2}^{k }G(E(V_{i}, m_{i})).$
\end{lemma}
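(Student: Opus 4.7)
The plan is to derive both implications directly from Lemma~\ref{ker}, which identifies $\ker(G|_{A^*(j\cdot 20 m_1)})$ with $E(V_1,m_1)\cap A^*(j\cdot 20 m_1)$, where $V_1 = S(L(M_1^{m_1/n}))$. Since $j = m_k/m_1$, the relevant ambient space is $A^*(j\cdot 20 m_1) = A^*(20 m_k)$, which is exactly where $u$ lives. Thus the asymmetry between the index $i=1$ (which is absorbed into the kernel of $G$) and the indices $i\geq 2$ (which remain visible after applying $G$) is already encoded in Lemma~\ref{ker}; the present lemma is essentially a bookkeeping exercise around that fact.

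For the forward direction, suppose $u = \sum_{i=1}^{k} u_i$ with $u_i \in E(V_i, m_i)$. Each $E(V_i, m_i)$ is a graded subspace: it is a sum of homogeneous pieces of the form $A^*(j'\cdot 20 m_i - 2 m_i)\, V_i\, A^*(m_i)\,(A^*)^1$, and $V_i \subseteq A^*(m_i)$ because $\gamma_t$ preserves the grading (it sends $x\mapsto x+t$, and $x$ has grading zero). We may therefore replace each $u_i$ by its degree-$20m_k$ homogeneous component without leaving $E(V_i,m_i)$. Then Lemma~\ref{ker} gives $u_1\in E(V_1,m_1)\cap A^*(20m_k) = \ker G$, so $G(u_1)=0$, and $F$-linearity of $G$ yields $G(u) = \sum_{i=2}^k G(u_i) \in \sum_{i=2}^k G(E(V_i,m_i))$.

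For the converse, suppose $G(u) = \sum_{i=2}^k G(w_i)$ with $w_i \in E(V_i, m_i)$. Since $G$ preserves the grading and $G(u)\in A^*(20m_k)$, the same gradedness argument lets us restrict each $w_i$ to its degree-$20m_k$ component, still lying in $E(V_i,m_i)$. Then $G\bigl(u - \sum_{i=2}^k w_i\bigr) = 0$, so Lemma~\ref{ker} gives $u - \sum_{i=2}^k w_i \in E(V_1, m_1)\cap A^*(20 m_k)$. Defining $w_1 := u - \sum_{i=2}^k w_i$, we obtain the desired decomposition $u = \sum_{i=1}^k w_i \in \sum_{i=1}^k E(V_i,m_i)$.

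The only mildly delicate step is the grading bookkeeping, since the sums $\sum_{i=2}^k G(w_i)$ need not respect the homogeneous decomposition of $u$ a priori; I expect this to be the main obstacle, but it is resolved by the observation above that $E(V_i,m_i)$ is a graded subspace, so projecting to the degree-$20m_k$ component preserves membership in every $E(V_i,m_i)$ simultaneously. Beyond this, both directions are immediate consequences of Lemma~\ref{ker}.
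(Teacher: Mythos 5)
Your proof is correct and follows essentially the same route as the paper's: both directions reduce directly to Lemma~\ref{ker}'s identification of $\ker(G|_{A^*(20m_k)})$ with $E(V_1,m_1)\cap A^*(20m_k)$, with the grading of the spaces $E(V_i,m_i)$ justifying the restriction to degree-$20m_k$ components. The paper compresses the homogeneity bookkeeping into a single remark (``since all the considered spaces are homogeneous''), whereas you spell it out; otherwise the arguments coincide.
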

\begin{proof}  Suppose that $G(u)\in \sum_{i=2}^{k }G(E(V_{i}, m_{i})).$ It follows that  
$G(u-e)=0$ for some $e\in \sum_{i=2}^{k }E(V_{i}, m_{i})$. Consequently 
  $u-e\in \ker (G)$, and since by Lemma \ref{ker} $\ker (G)\subseteq E(V_{1}, m_{1})$, it follows that 
   $u\in \sum_{i=1}^{k }E(V_{i}, m_{i}).$

 To see the second implication, suppose now that $u\in \sum_{i=1}^{k}E(V_{i}, m_{i});$ then $G(u)\in \sum_{i=1}^{k}G(E(V_{i}, m_{i})).$  By Lemma \ref{ker}, $E(V_{1}, m_{1})\cap 
 A^{*}(20m_{k})\subseteq \ker (G)$, hence $G(L(V_{1}, m_{1}))=0$. It follows that  $G(u)\in \sum_{i=2}^{k }G(E(V_{i}, m_{i}))$, since all the considered spaces are homogeneous. 
\end{proof}
\begin{lemma}\label{helping}  Let notation be as in Lemma \ref{pomocniczy}.
For $i=2,3, \ldots $ denote \[M'_{i}=G(M_{i}^{20m_{1}\over n}), W_{i}=L(S({M'_{i}}^{m_{i}\over 20m_{1}})), T_{i}=G(E(V_{i}, m_{i})\cap A^{*}(20m_{k})).\]  Then for $i\geq 2$ we have 
$T_{i}\subseteq E(W_{i}, m_{i})\cap A^{*}(20m_{k}) \subseteq T_{i}+E(V_{1}, m_{1}).$
\end{lemma}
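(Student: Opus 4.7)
The strategy is to reduce both inclusions to two structural facts about $G$: (i) $W_i = G(V_i)$, and (ii) $G$ is an idempotent linear endomorphism of each $A^{*}(j \cdot 20m_1)$ that is multiplicative on products whose factors each have degree a multiple of $20m_1$. The description of $\ker(G)$ from Lemma~\ref{ker} will then handle the $E(V_1,m_1)$ error term.

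For (i), we use that $20m_1 \mid m_i$ for $i \geq 2$ (which follows from the hypothesis $20nm_1 \mid m_i$) to write $M_i^{m_i/n} = (M_i^{20m_1/n})^{m_i/(20m_1)}$, so the definition of $G$ on products of $20m_1$-blocks gives $G(M_i^{m_i/n}) = (M'_i)^{m_i/(20m_1)}$; applying Lemma~\ref{G5} then yields $W_i = L(S(G(M_i^{m_i/n}))) = G(L(S(M_i^{m_i/n}))) = G(V_i)$. For (ii), the idempotency $f^2 = f$ comes from Lemma~\ref{one}(5), which realizes $f$ as the projection onto $\mathrm{Im}(f)$ along $\ker(f)$; a direct check on $v_1 \cdots v_9 f(v_{10})$ extends this to $G^2 = G$ on $A^{*}(20m_1)$, and the multiplicative property $G(ab) = G(a)\cdot G(b)$ whenever $\deg(a)$ and $\deg(b)$ are multiples of $20m_1$ is immediate from the defining formula $G(w_1 \cdots w_j) = G(w_1)\cdots G(w_j)$. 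Idempotency on every $A^{*}(j \cdot 20m_1)$ then follows by propagation.

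For the first inclusion, take a generator $\alpha v \beta \gamma$ of $E(V_i, m_i) \cap A^{*}(20m_k)$ with $\alpha \in A^{*}(j' \cdot 20m_i - 2m_i)$, $v \in V_i$, $\beta \in A^{*}(m_i)$, $\gamma \in (A^{*})^{1}$. Each factor has degree a multiple of $20m_1$ (setting $G(1) := 1$ for the case $\gamma = 1$), so (ii) gives $G(\alpha v \beta \gamma) = G(\alpha) \cdot G(v) \cdot G(\beta) \cdot G(\gamma)$ with $G(v) \in G(V_i) = W_i$, showing $T_i \subseteq E(W_i, m_i) \cap A^{*}(20m_k)$. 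For the second inclusion, take $u = \sum_k \alpha_k w_k \beta_k \gamma_k \in E(W_i, m_i) \cap A^{*}(20m_k)$, use (i) to pick $v_k \in V_i$ with $G(v_k) = w_k$, and set $u' = \sum_k \alpha_k v_k \beta_k \gamma_k \in E(V_i, m_i) \cap A^{*}(20m_k)$, so $G(u') \in T_i$. Then $G(u) = \sum_k G(\alpha_k) \cdot w_k \cdot G(\beta_k) \cdot G(\gamma_k) = G(u')$, where the first equality uses (ii) together with $G(w_k) = G^2(v_k) = G(v_k) = w_k$. Hence $u - G(u') \in \ker(G\vert_{A^{*}(20m_k)}) = E(V_1, m_1) \cap A^{*}(20m_k)$ by Lemma~\ref{ker}, and so $u \in T_i + E(V_1, m_1)$, as required.

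The main obstacle will be justifying rigorously that $G$ distributes through the product $\alpha v \beta \gamma$ when the four factors come from several different homogeneous components: this rests on every factor being $20m_1$-block-aligned, which is precisely the content of the divisibility hypothesis $20nm_1 \mid m_i$ for $i \geq 2$. Once this alignment is in place, the remaining calculations are linear-algebra manipulations on blocks together with the idempotency of $G$.
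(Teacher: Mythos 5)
Your proof is correct and follows essentially the same approach as the paper: both hinge on the identity $W_i = G(V_i)$ (obtained via Lemma~\ref{G5}), the multiplicativity of $G$ across $20m_1$-block-aligned products, and Lemma~\ref{ker}'s identification of $\ker G$ with $E(V_1,m_1)$. The only slight stylistic difference is in the second inclusion: the paper decomposes each homogeneous factor $A^{*}(\cdot)$ as $G(A^{*}(\cdot)) + E(V_1,m_1)\cap A^{*}(\cdot)$ term by term, whereas you invoke idempotency of $G$ globally to write $u = G(u') + (u - G(u))$ with $u - G(u) \in \ker G$; these rest on the same underlying fact (assertion (5) of Lemma~\ref{one}) and yield the same conclusion.
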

\begin{proof} Recall that $20m_{1}$ divides $m_{i}$ for all $i\geq 2$; hence 
\[T_{i}=A^{*}(20m_{k})\cap \sum_{j=1}^{\infty }G(A^{*}(j20m_{i}-2m_{i}))G(L(S({M_{i}}^{m_{i}\over n})))G(A^{*}(m_{i}))G({A^{*}}^{1}).\] 
 
Observe that \[{M'_{i}}^{m_{i}\over 20m_{1}}=[G(M_{i}^{20m_{1}\over n})]^{m_{i}\over 20m_{1}}=G({M_{i}}^{m_{i}\over n})\]
 by the definition of mapping $G$. Therefore, and by Lemma \ref{G5} applied for $m=2m_{1}$,  
\[L(S({M'_{i}}^{m_{i}\over 20m_{1}}))=L(S(G({M_{i}}^{m_{i}\over n})))=G(L(S({M_{i}}^{m_{i}\over n}))).\] 
 It follows that 
\[T_{i}=A^{*}(20m_{k})\cap \sum_{j=1}^{\infty }G(A^{*}(j20m_{i}-2m_{i}))(L(S({M'_{i}}^{m_{i}\over 20m_{1}})))G(A^{*}(m_{i}))G({A^{*}}^{1}).\]\label{3}
 It follows that $T_{i}\subseteq E(W_{i}, m_{i})\cap A^{*}(20m_{k})$.

 We will now show that $E(W_{i}, m_{i})\cap A^{*}(20m_{k}) \subseteq T_{i}+E(V_{1}, m_{1}).$
  Recall that the mapping $G$ can be defined on $A^{*}(j\cdot 20m_{1})$ for any $j$, and that $20m_{1}$ divides $m_{i}$ for each $i> 1.$ 
 Observe now that by assertion $(5)$ from Lemma \ref{one}, $\ker (f)+ $Im $(f)=A^{*}(m)$. 
 Therefore, by the construction of mapping $G$ we get that $A^{*}(j\cdot 20m_{1})=($Im $(G)+\ker G)\cap A^{*}(j\cdot 20m_{1})$ for every $j$.
 
It follows that 
$A^{*}(m_{i})\subseteq G(A^{*}(m_{i}))+ \ker (G)\cap A^{*}(m_{i})$.
 By Lemma \ref{ker}, $A^{*}(m_{i})\subseteq G(A^{*}(m_{i}))+  E(V_{1}, m_{1})\cap A^{*}(m_{i})$.
 Similarly, $A^{*}(j\cdot 20m_{i}-2m_{i}))\subseteq G(A^{*}(j20m_{i}-2m_{i}))+E(V_{1}, n_{1})\cap A^{*}(j\cdot 20m_{i}-2m_{i})$.
 It follows that $E(W_{i}, m_{i})\cap A^{*}(20m_{k}) \subseteq T_{i}+E(V_{1}, m_{1}).$ 
\end{proof}
\begin{theorem}\label{rowne} Let notation be as in Lemma \ref{helping}. Let $u\in A^{*}(20m_{k})$. 
 Then  \[u\in \sum_{i=1}^{k }E(V_{i}, m_{i}),\] if and only if  
   \[G(u)\in \sum_{i=2}^{k }E(W_{i}, m_{i}).\]
\end{theorem}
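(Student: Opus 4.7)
The plan is to combine Lemmas \ref{pomocniczy} and \ref{helping} via one additional structural observation: that $G$ is an idempotent on $A^{*}(20m_{k})$. From Lemma \ref{one}(5), $f$ is the projection of $A^{*}(2m_{1})$ onto $E = \mathrm{Im}(f)$ along $S = V_{1}A^{*}(m_{1}) = \ker(f)$, so $f^{2}=f$. Unwinding the block definition of $G$ on a monomial $w = w_{1}\cdots w_{j}$ with each $w_{i}$ a monomial of $A^{*}(20m_{1})$, and using that a monomial of $A^{*}$ has a canonical decomposition into degree-$2$ pieces (hence into degree-$2m_{1}$ and degree-$20m_{1}$ blocks), one checks that $G^{2}(w)$ differs from $G(w)$ only by replacing $f$ with $f^{2}$ in the last factor of each length-$10$ block, and thus $G^{2}=G$. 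Consequently
\[
A^{*}(20m_{k}) \;=\; \bigl(\mathrm{Im}\,G \cap A^{*}(20m_{k})\bigr) \;\oplus\; \bigl(\ker G \cap A^{*}(20m_{k})\bigr),
\]
and by Lemma \ref{ker} the second summand equals $E(V_{1},m_{1})\cap A^{*}(20m_{k})$, while each $T_{i}$ from Lemma \ref{helping} sits inside the first summand.

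For the implication ``$u \notin \sum_{i=1}^{k} E(V_{i},m_{i}) \Rightarrow G(u)\notin \sum_{i=2}^{k} E(W_{i},m_{i})$'', I would argue by contrapositive. Suppose $G(u)\in \sum_{i=2}^{k} E(W_{i},m_{i})$. Since $G(u)\in A^{*}(20m_{k})$, the second inclusion of Lemma \ref{helping} gives a decomposition $G(u)=t+e$ with $t\in \sum_{i=2}^{k} T_{i}$ and $e\in E(V_{1},m_{1})\cap A^{*}(20m_{k}) \subseteq \ker G$. Applying $G$ and using $G^{2}=G$ together with $G|_{\mathrm{Im}\,G}=\mathrm{id}$, the error term vanishes: $G(u)=G^{2}(u)=G(t)+G(e)=t\in \sum_{i=2}^{k} T_{i}= \sum_{i=2}^{k} G(E(V_{i},m_{i}))$ (after homogenizing to degree $20m_{k}$). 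Lemma \ref{pomocniczy} then delivers $u\in \sum_{i=1}^{k} E(V_{i},m_{i})$.

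The reverse implication is the easier direction. Assume $u\in \sum_{i=1}^{k} E(V_{i},m_{i})$. Lemma \ref{pomocniczy} gives $G(u)\in \sum_{i=2}^{k} G(E(V_{i},m_{i}))$; intersecting with the homogeneous component $A^{*}(20m_{k})$ yields $G(u)\in \sum_{i=2}^{k} T_{i}$. The first inclusion of Lemma \ref{helping}, $T_{i}\subseteq E(W_{i},m_{i})\cap A^{*}(20m_{k})$, then places $G(u)$ in $\sum_{i=2}^{k} E(W_{i},m_{i})$. The contrapositive is the required ``$\Leftarrow$'' in the theorem.

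The main obstacle is the idempotency step. Verifying $G^{2}=G$ requires being careful about the interaction between the canonical pairing of consecutive generators in a monomial of $A^{*}$ and the artificial grouping of these generators into blocks of length $2m_{1}$ and $20m_{1}$; once one checks that applying $G$ to an element of $\mathrm{Im}\,G$ acts blockwise as $f$ applied to something already in $\mathrm{Im}\,f$, everything reduces to $f^{2}=f$, and the $E(V_{1},m_{1})$-slack in Lemma \ref{helping} is annihilated precisely because $\ker G \cap A^{*}(20m_{k})$ sits in a complementary summand to $\mathrm{Im}\,G \cap A^{*}(20m_{k})$.
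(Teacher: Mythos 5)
Your argument is correct and follows essentially the same route as the paper: decompose $G(u)$ via Lemma~\ref{helping} into a $\sum_{i\ge 2} T_i$ part and an $E(V_1,m_1)$ error term, kill the error term using the direct sum decomposition $\mathrm{Im}\,G\oplus\ker G$ of $A^{*}(20m_k)$ (with $\ker G = E(V_1,m_1)\cap A^{*}(20m_k)$ from Lemma~\ref{ker}), and conclude with Lemma~\ref{pomocniczy}. The only difference is that you establish the splitting by verifying $G^2=G$ explicitly (reducing to $f^2=f$ from Lemma~\ref{one}(5)), whereas the paper asserts $\mathrm{Im}\,G\cap\ker G=0$ directly from assertion (5) without spelling out the propagation from $f$ to $G$; your idempotency check supplies exactly the fact the paper takes for granted.
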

\begin{proof} We will first prove that $G(u)\in \sum_{i=2}^{k }E(W_{i}, m_{i})$ implies $u\in \sum_{i=1}^{k }E(V_{i}, m_{i})$. Assume on the contrary that   $G(u)\in \sum_{i=2}^{k }E(W_{i}, m_{i})$ and $u\notin \sum_{i=1}^{k }E(V_{i}, m_{i})$. Observe that by Lemma \ref{helping},  
  $E(W_{i}, m_{i})\cap A^{*}(20m_{k}) \subseteq \sum_{i=2}^{k}T_{i}+E(V_{1}, m_{1}),$  hence $G(u)\in  \sum_{i=2}^{k}T_{i}+E(V_{1}, m_{1}).$  
 Therefore there is $g\in \sum_{i=2}^{k}T_{i}$ and $h\in E(V_{1}, m_{1})$ and $G(u)=g+h$.
 By assertion (5) from Lemma \ref{one} we get that Im $(G)\cap \ker (G)=0$. Since $G(u)$ and $g$ are in Im$(G)$ and $h\in E(V_{1}, m_{1})$ is in the kernel of mapping $G$, 
then $G(u)-g=h$ implies $G(u)-g=0$. Therefore 
$G(u)\in\sum_{i=2}^{k }T_{i}$, a contradiction with Lemma \ref{pomocniczy}. 

We will now prove that $G(u)\notin \sum_{i=2}^{k }E(W_{i}, m_{i})$ implies $u\notin \sum_{i=1}^{k }E(V_{i}, m_{i})$. 
Suppose that $G(u)\notin \sum_{i=2}^{k }E(W_{i}, m_{i}).$ By Lemma \ref{helping}, $G(u)\notin \sum_{i=2}^{k }T_{i},$ where $T_{i}=G(E(V_{i}, m_{i})\cap A(20m_{k}))$. 
 Observe that by Lemma \ref{ker}, $G(E(V_{1}, m_{1}))=0$. 
Since $G$ is a linear mapping it implies $u\notin  \sum_{i=1}^{k }E(V_{i}, m_{i})$
(as otherwise we would have $G(u)\in \sum_{i=2}^{k }T_{i}.$) 
\end{proof}


\section{Assumptions $1$ and $2$ }

 Let notation be as in Section $5$. Recall that, for a matrix $M$, $S(M)$ denotes the linear space spanned by all entries of $M$, and $L(M)=\sum _{t\in F}\gamma _{t}(M).$ 
 Recall that by $\langle x\rangle $ we denote the ideal generated by $x$ in $A$.

The following statement will be called Assumption $1$ (for $F$-algebra $A$).

{\bf  Assumption 1.} Let $M$ be a matrix with entries in $A^{*}(j)$ for some $j$, and such that for almost all $\alpha $ matrix 
$M^{\alpha }$ has all entries in $\langle x\rangle $. Then there are infinitely many $n$, such that the dimension of the space $R\cap S(L(M^{n}))$ does not exceed $\sqrt {n}$.

 {\em Comment.} Notice that, since $A'$ is a graded algebra,  it is necessary to assume that all elements of the  matrix $M$ have the same degree, since otherwise 
 the entries of $M^{\alpha }$ would have many  homogeneous components and $\dim _{F}(R\cap S(L(M^{n})))$ could exceed $\sqrt {n}$.
  On the other hand, it would be possible to assume that  $M$ is a matrix with entries in $A(j)$ for some $j$; however, for our purpose it suffices to assume that $M$ is a matrix with entries in $A^{*}(j),$ for some $j$.
\begin{remark}\label{quote}
Suppose that Assumption $1$ holds, and let $k$ be a natural number. We can apply Assumption $1$ to matrix $M^{k}$ 
 to get the following implication of Assumption $1$:
 Let $m$ be a natural number. There 
are infinitely many $n$ divisible by $k$ such that the dimension of the 
space $R\cap S(L(M^{n}))$ is less than $\sqrt {n}.$
\end{remark}

\begin{delfin} Let $l, t, m, n$ be natural numbers and let $r_{0}, r_{1},\ldots , r_{t}\in A^{*}$. 
 We define \[e(n,m)(r_{0}, r_{1}, \ldots , r_{t})= \sum_{ i_{1}+\ldots +i_{m}=n}r_{i_{1}}r_{i_{2}}\ldots r_{i_{m}} .\]
\end{delfin}

The following lemma is similar to Lemma $7$ (b) in \cite{nil}. 

\begin{lemma}\label{known} Let $m, n, t, l$ be natural numbers, and let $r_{0}, r_{1},\ldots , r_{l}\in A^{*}(t)$. Denote $e(n,m)=e(n,m)(r_{0}, r_{1}, \ldots , r_{l})$; then for every $0<k<m$
\[e(n, m)=\sum_{i=0}^{n}e(i,k)e(n-i,m-k).\]
\end{lemma}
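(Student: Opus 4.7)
The plan is to prove the identity by directly manipulating the defining sum for $e(n,m)$. Recall that
\[ e(n,m) = \sum_{i_{1}+\ldots+i_{m}=n} r_{i_{1}} r_{i_{2}} \cdots r_{i_{m}}, \]
where each $i_{j}$ ranges over $\{0,1,\ldots,l\}$. The key observation is that the single summation condition $i_{1}+\ldots+i_{m}=n$ can be decomposed by grouping the first $k$ indices and the last $m-k$ indices, since $k<m$.

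First I would introduce an auxiliary index $i = i_{1}+\ldots+i_{k}$, which ranges from $0$ up to $n$ (and is $0$ unless the remaining part $i_{k+1}+\ldots+i_{m} = n-i$ is non-negative). Splitting the sum accordingly gives
\[ e(n,m) = \sum_{i=0}^{n} \Biggl( \sum_{i_{1}+\ldots+i_{k}=i} r_{i_{1}}\cdots r_{i_{k}} \Biggr) \Biggl( \sum_{i_{k+1}+\ldots+i_{m}=n-i} r_{i_{k+1}}\cdots r_{i_{m}} \Biggr), \]
where I have used associativity of multiplication in $A^{*}$ to separate the product into its first $k$ factors and last $m-k$ factors. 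By the very definition of $e$, the two inner sums are $e(i,k)$ and $e(n-i,m-k)$ respectively, yielding exactly the required identity.

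There is essentially no obstacle here; the statement is a purely combinatorial bookkeeping identity. The only mild subtlety is that the indices $i_{j}$ range over $\{0,1,\ldots,l\}$ rather than all non-negative integers, but this is harmless because both sides of the claimed identity implicitly treat $r_{s}$ as zero when $s>l$, so the same set of monomials appears on both sides with matching coefficients. Homogeneity (each $r_{j}\in A^{*}(t)$) is not used in the proof and serves only to guarantee that the ambient grading is respected; the identity itself holds in any associative ring for any finite family $\{r_{j}\}$.
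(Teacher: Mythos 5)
Your proof is correct, and it is the natural direct argument: decompose the single constraint $i_1+\cdots+i_m=n$ by conditioning on the partial sum $i=i_1+\cdots+i_k$, then apply associativity and distributivity to factor the inner sums as $e(i,k)\,e(n-i,m-k)$. Note that the paper itself does not prove Lemma~\ref{known}; it simply states that it is known and cites \cite{nil}, so there is no in-paper proof to compare against --- but this combinatorial bookkeeping is the standard argument, and your handling of the boundary issue (indices $i_j$ capped at $l$, empty sums contributing zero) is exactly right.
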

\begin{proof} It is easier to prove a more general result where 
 $r_{i}$ are free generators of a free algebra and we assign gradation $i$ to element $r_{i}$.
 Such a result can be proved for example by induction on $k$.
 The special case when $r_{i}\in A^{*}(t)$ implies Lemma \ref{known}. 
\end{proof}

\begin{lemma}\label{e}
 Let $l, t, m$ be natural numbers and let $r_{0}, r_{1},\ldots , r_{l}\in A^{*}(t)$.
 Denote $r'_{i}=e(i, m)(r_{0}, r_{1}, \ldots , r_{l})$.
 Then for every $n$ and every $i\leq lmn$,  
\[e(i,n)(r'_{0}, r'_{1}, \ldots , r'_{lm})=e(i,mn)(r_{0}, r_{1}, \ldots ,r_{l}).\]
\end{lemma}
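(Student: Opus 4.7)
The plan is to prove Lemma~\ref{e} by induction on $n$, using the splitting identity from Lemma~\ref{known} as the key tool. Denote $e'(i,n) = e(i,n)(r'_0, r'_1, \ldots, r'_{lm})$ and $e(i,k) = e(i,k)(r_0, r_1, \ldots, r_l)$. We want to show $e'(i,n) = e(i, mn)$ for every $n \geq 1$ and $i \leq mn$.

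For the base case $n = 1$, the identity $e'(i, 1) = r'_i = e(i, m)$ holds by the very definition of $r'_i$. For the inductive step, assume $e'(j, n) = e(j, mn)$ for all $j \leq mn$, and consider $e'(i, n+1)$. Apply Lemma~\ref{known} to the left-hand side with $k = 1$ to obtain
\[ e'(i, n+1) = \sum_{j=0}^{i} e'(j, 1)\, e'(i-j, n) = \sum_{j=0}^{i} r'_j \, e'(i-j, n). \]
By the inductive hypothesis and the definition of $r'_j = e(j, m)$, this becomes
\[ e'(i, n+1) = \sum_{j=0}^{i} e(j, m)\, e(i-j, mn). \]
Applying Lemma~\ref{known} again, this time to $e(i, m(n+1))$ with the split parameter $k = m$ (and $m(n+1) - m = mn$), gives exactly $\sum_{j=0}^{i} e(j, m)\, e(i-j, mn) = e(i, m(n+1))$, completing the induction.

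I don't anticipate a serious obstacle: the lemma is essentially an associativity statement about grouping a product of $mn$ factors $r_{i_k}$ into $n$ blocks of $m$ factors each, so the only real content is verifying that Lemma~\ref{known} can be applied in exactly the two places required. One could also give a purely combinatorial proof by expanding both sides as sums over tuples $(i_1, \ldots, i_{mn})$ with $\sum i_s = i$, but the induction via Lemma~\ref{known} is shorter and keeps the argument entirely within the formalism already established in the paper.
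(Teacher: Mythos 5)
Your proof is correct and follows essentially the same route as the paper: induction on $n$, invoking Lemma~\ref{known} once to peel off a block on the $r'$ side and once to recombine on the $r$ side. The only cosmetic difference is that you split off the first block (using the split parameter $k=1$, then $k=m$) whereas the paper splits off the last block (using $k=m(n-1)$); both are instances of the same associativity argument.
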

\begin{proof}
We will use induction on $n$. If $n=1$ then the result is clear. Suppose that $n>1$ and that the result holds for all numbers smaller than $n$.
  By Lemma \ref{known}, $e(i,mn)(r_{0}, r_{1}, \ldots ,r_{l})=\sum_{j=0}^{i}e(j,m(n-1))(r_{0}, r_{1}, \ldots ,r_{l})\cdot e(j,m)(r_{0}, \ldots ,r_{l})$. By the inductive assumption  and by Lemma \ref{known}, we get 
$e(i,mn)(r_{0}, r_{1}, \ldots ,r_{l})=\sum_{j=0}^{i}e(j,n-1)(r'_{0}, \ldots , r'_{lm})\cdot e(j,1)(r'_{0}, \ldots ,r'_{lm})= e(j,n)(r'_{0}, \ldots , r'_{l}).$ 
\end{proof}

Recall that $\langle x\rangle $ is the ideal of $A'$ generated by $x$.

\begin{lemma}\label{induction}  Let $F$ be a field, and suppose that Assumption $1$ holds for $F$-algebra $A$.
 Let $n, t$ be natural numbers and let $M$ be a matrix with coefficients in $A^{*}(n)$.
 Assume that  either all entries of $M$ are in $R$ or for almost all $q$ entries of $M^{q}$ are in $\langle x\rangle $.  
 Let $t\geq 1$,  $r_{0}, r_{1}, \ldots , r_{t}\in A^{*}(n)\cap R$, and denote $e(j, k)=e(j, k)(r_{0}, r_{1}, \ldots , r_{t})$.  Assume moreover that there are $r_{i},r_{i'}\neq 0$ such that $r_{i'}\notin F\cdot r_{i}$ for some $0\leq i,i'\leq t$. 
 Then there exist $m$ and $j, j'$ such that $20n$ divides $m$ and 
\[e(j,{20m\over n})\notin A^{*}(18m)VA^{*}(m)\] and 
\[e(j',{20m\over n})\notin F \cdot e(j,{20m\over n})+A^{*}(18m)VA^{*}(m),\]
 where $V=L(S(M^{m\over n}))\subseteq A^{*}(m).$ 
 Moreover, if $c>0$ is a natural number then we can assume that $20nc$ divides $m$.
\end{lemma}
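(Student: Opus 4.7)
The plan is to combine Assumption~1 with an $x$-grading of $A^{*}$ and a Vandermonde-style reformulation, and to derive a contradiction against the small bound on $V\cap R$ if the conclusion fails.

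First I would invoke Assumption~1 (or, in the case when all entries of $M$ lie in $R$, Lemma~\ref{pierwszy}) together with its Remark to pick an arbitrarily large $m$ with $20nc \mid m$ such that, setting $k=m/n$ and $V=L(S(M^k))$, we have $\dim_F(V\cap R)\le\sqrt k$. The algebra $A^{*}$ is naturally bi-graded by $a,b$-gradation and $x$-degree; each generator $ax^{i}a x^{j}$ (resp.\ $bx^{i}b x^{j}$) has $x$-degree $i+j$, so the $x$-degree-zero subalgebra of $A^{*}$ equals $R_{2}:=F\langle a^{2},b^{2}\rangle$. Since $\gamma_{t}$ preserves $A^{*}$, we have $V\subseteq A^{*}(m)$ and $V\cap R\subseteq R_{2}(m)$.

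Second I would pass to the $x$-degree-zero component of everything in sight. Because each $r_{i}\in R$ has $x$-degree zero, so does each $e(j,20k)$; meanwhile, a direct computation (using linear independence of distinct products of the generators $ax^{i}ax^{j}$, $bx^{i}bx^{j}$ in the free algebra $A'$) shows the $x$-degree-zero part of $A^{*}(18m)\,V\,A^{*}(m)$ is exactly $W:=R_{2}(18m)(V\cap R)R_{2}(m)$. Therefore it suffices to find $j,j'$ with $e(j,20k)$ and $e(j',20k)$ linearly independent modulo $W$. Now the Vandermonde trick from Lemmas~\ref{useful1} and \ref{writing} applies: setting $s(\zeta):=\sum_{i=0}^{t}r_{i}\zeta^{i}$, we have $s(\zeta)^{20k}=\sum_{j}\zeta^{j}e(j,20k)$, and the infinitude of $F$ yields $\mathrm{span}_{F}\{s(\zeta)^{20k}:\zeta\in F\}=\mathrm{span}_{F}\{e(j,20k):0\le j\le 20kt\}$.

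The delicate step, and the main obstacle, is to show this span has $F$-dimension at least $2$ modulo $W$. My plan is a proof by contradiction: suppose every $s(\zeta)^{20k}$ lay in $F\cdot s(\zeta_{0})^{20k}+W$ for some fixed $\zeta_{0}\in F$. Choosing $N>\sqrt k+1$ distinct scalars $\zeta_{1},\ldots,\zeta_{N}\in F$ and inverting the resulting Vandermonde matrix converts the family of relations $s(\zeta_{\ell})^{20k}-\lambda_{\ell}s(\zeta_{0})^{20k}\in W$ into more than $\sqrt k$ linearly independent elements of $R$ lying inside the subspace $W$, which in turn forces a corresponding number of linearly independent contributions from the factor $V\cap R$; this contradicts $\dim_{F}(V\cap R)\le\sqrt k$ from Assumption~1. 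The freedom in the Remark after Assumption~1 to take $k$ arbitrarily large and divisible by $20c$ both drives the counting argument and yields the divisibility clause $20nc\mid m$ in the statement.
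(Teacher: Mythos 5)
Your argument has a genuine gap in the final counting step, and the reduction that leads to it is also shakier than you allow. The decisive problem is the sentence ``more than $\sqrt k$ linearly independent elements of $R$ lying inside the subspace $W$, which in turn forces a corresponding number of linearly independent contributions from the factor $V\cap R$.'' This implication does not hold: $W=R_{2}(18m)(V\cap R)R_{2}(m)$ is obtained by multiplying $V\cap R$ on both sides by large free pieces, so its dimension is not controlled by $\dim_F(V\cap R)$. Already if $V\cap R$ is one-dimensional, spanned by a nonzero $v_{0}$, the space $R_{2}(18m)\,v_{0}\,R_{2}(m)$ in the free algebra $A^{*}$ has dimension on the order of $\dim R_{2}(18m)\cdot\dim R_{2}(m)$, which dwarfs $\sqrt k$. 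So producing many independent elements of $R$ inside $W$ gives no contradiction with $\dim_F(V\cap R)\le\sqrt k$, and the proof collapses at exactly the point where you need a lower bound on $\dim_F(V\cap R)$.

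A secondary concern is the claim that ``the $x$-degree-zero part of $A^{*}(18m)\,V\,A^{*}(m)$ is exactly $W$.'' When the entries of $M$ lie in $\langle x\rangle$, the space $V=L(S(M^{m/n}))$ is the platinum closure under the automorphisms $\gamma_{t}$, and $\gamma_{t}$ does not preserve $x$-degree. Consequently $V$ need not be $x$-homogeneous, so its $x$-degree-zero projection $\pi_{0}(V)$ can be strictly larger than $V\cap R$, and the $x$-degree-zero part of $A^{*}(18m)\,V\,A^{*}(m)$ is controlled by $\pi_{0}(V)$, not by $V\cap R$. The ``direct computation'' you invoke is not automatic and would need to be carried out carefully; as stated it begs the homogeneity question.

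What the paper actually does is different in kind: it applies Assumption~1 (and Lemma~\ref{pierwszy} for the $R$-entry case) directly at the level $m/n$, so the dimension bound $\sqrt{m/n}$ competes with the $({m/n}+1)$-dimensional span of $e(0,m/n),\dots,e(m/n,m/n)$, immediately producing $j,j'$ with $e(j,m/n)\notin T$ and $e(j',m/n)\notin T+F\cdot e(j,m/n)$, where $T=S(L(M^{m/n}))$. The extension to $20m/n$ then avoids any further dimension count: one takes $j$ minimal, uses the recursion $e(j,2m/n)=e(j,m/n)e(0,m/n)+\sum_{i<j}e(i,m/n)e(j-i,m/n)$ from Lemma~\ref{known}, and compares the $A^{*}(m)$-segment at the right end (and then the $A^{*}(18m)$-segment at the left end) of each product in the free graded algebra to conclude $e(j,20m/n)\notin A^{*}(18m)TA^{*}(m)$, with a parallel argument for $j'$ using $T'=T+F\cdot e(j,20m/n)$. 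If you want to repair your proof you would need to replace the final cardinality argument with something of this peeling/minimality flavour; counting dimensions of $W$ cannot work.
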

 \begin{proof} Consider elements $e(i,{20m\over n})$ for all $m$ divisible by $n$.  Observe that, for almost all such $m$,  the linear space spanned by $e(i,{20m\over n})$ for $i=0,1,2, \ldots $ has dimension larger than $\sqrt {20m\over n}+2$. By Assumption $1$ and Lemma \ref{pierwszy},
$\dim_{F}V\leq \sqrt {20m\over n}$  for infinitely many $m$ (moreover by Remark \ref{quote} we can assume that infinitely many $m$ are divisible by $20nc$). Consequently 
 there is $m$ divisible by $n$ and $j,j'$ such that  
$e(j,{m\over n})\notin V$ and 
$e(j',{m\over n})\notin V +F \cdot e(j', {m\over n}).$ Moreover, we can assume that $20nc$ divides $m$, by the Remark \ref{quote}.

 Let $j$ be minimal such that $e(j, {m\over n})\notin V$ and $j'$ minimal such that $e(j', m)\notin V+F\cdot e(j,{m\over n}).$
 We claim that $e(j,{20m\over n})\notin A^{*}(18m)VA(m)$ and $e(j', {20m\over n})\notin A^{*}(18m)VA(m) +F\cdot e(j,{20m\over n}).$

  Notice that $e(0,{m\over n})={r_{0}}^{{m\over n}}$. Recall that $A^{*}$ is a graded algebra, and therefore $e(j, {m\over n})e(0,{m\over n})\notin VA^{*}(m_{1})$; this can be seen by comparing the elements  from $A^{*}(m)$ at the end of each side. 
 By Lemma \ref{known},  $e(j, {2m\over n})=e(j,{m\over n})e(0,{m\over n})+\sum_{i<j}e(i,{m\over n})e(j-i,{m\over n})$.  
 By the minimality of $j$, we get $e(j, {2m\over n})\notin VA^{*}(m)$. Notice also that, by a similar argument $j$ is minimal such that $e(j, {2m\over n})\notin VA^{*}(m)$. Observe now that $e(0, {18m\over n})e(j, {2m\over n})\notin A^{*}(18m)VA^{*}(m);$ this can be seen by comparing the elements from $A^{*}(18m)$  
at the beginning of each side. By Lemma \ref{known}, $e(j, {20m\over n})=e(0, {18m\over n})e(j,{2m\over n})+\sum_{i<j}e(j-i,{18m\over n})e(i, {2m\over n}).$  Recall that $j$ was minimal such that $e(j, {2m\over n})\notin VA^{*}(m)$, therefore
 $e(j, {20m\over n})\notin A^{*}(18m)VA(m).$

Observe now that, since $e(j', {m\over n})\notin V+F\cdot e(j, {m\over n}),$ then by the same reasoning as above
 applied to the set $T'=V+F\cdot e(j,{m\over n})$ instead of the set $V$, we get 
\[e(j', {20m\over n})\notin A^{*}(18m)T'A^{*}(m).\]
 By the definition of $T'$ we have $e(j, {m\over n})\in T'$ and by the minimality of $j$, $e(i, {m\over n})\in V$ for $i<j$ . By Lemma \ref{known}, 
\[e(j,{20m\over n})\in A^{*}(18m)\sum _{i=0}^{j}e(i,{m\over n})A^{*}(m)\subseteq A^{*}(18m)T'A^{*}(m).\] Therefore $A^{*}(18m)VA(m)+F\cdot e(j, 20m) \subseteq  A^{*}(18m)T'A^{*}(m)$. 
 Hence $e(j', {20m\over n})\notin A^{*}(18m)VA^{*}(m)+F\cdot e(j, 20m),$ as required.
\end{proof}

We will now introduce  Assumption $2$. We introduce this Assumption to shorten the statements of  several theorems, where we will simply write {\em let Assumption $2$ hold} instead of writing the sentences from below. The Assumption $2$ simply says that numbers $m_{i}$ and matrices $M_{i}$ satisfy some conditions, these conditions are now described.

{\bf Assumption 2.}  
 Let $n, k$ be natural numbers with $n$ even,  and let  $m_{1}<m_{2}<\ldots<m_{k}$ be such that $20n m_{i}$ divides $m_{i+1}$ for all $1\leq i < k$ and $20n$ divides  $m_{1}$. Let $M_{i}$  for $i=1,2, \ldots , k+1$ be matrices with entries in $A^{*}( n)$ and 
 assume that either ${M_{i}}^{q}$ has entries in $\langle x\rangle $ for almost all $q$, or $M_{i}$ has entries in $R$.

We will use the following notation.
The mapping  $G: A^{*}(t\cdot 20m_{1})\rightarrow A^{*}(t\cdot 20m_{1})$ is defined as in Lemma \ref{ker} for $t={m_{k}\over m_{1}}$. 
 
 For $i=2,3, \ldots $ denote  $V_{i}=L(S(M_{i}^{m_{i}/n})), M'_{i}=G(M_{i}^{20m_{1}\over n})$, $W_{i}=L(S({M'_{i}}^{m_{i}\over 20m_{1}}))$ and $T_{i}=G(E(V_{i}, m_{i})\cap A^{*}(20m_{k})).$
\begin{lemma}\label{linearmapping} Suppose that Assumption $1$ holds for $F$-algebra $A$. Suppose that Assumption $2$ holds.  Let $k\geq 0$ be a natural number.
 Then the following conditions hold. 
\begin{itemize}
\item[1.]  If $k=0$, let $r_{0},r_{1},\ldots  , r_{l}\in A^{*}(n)$. 
Suppose that there are $j, j'$ such that \[\alpha \cdot r_{j}+\beta \cdot r_{j'}\neq 0,\] provided that $\alpha ,\beta \in F$ are not both zero. 
 Denote $e(j, k)=e(j, k)(r_{0}, r_{1}, \ldots , r_{t})$.
 Then there exists $m_{1}$ such that  $20n$ divides $m_{1}$ and 
\[\alpha \cdot  e(l,{20m_{1}\over n})+\beta \cdot e(l', {20m_{1}\over n}) \notin E(V_{1}, m_{1}).\]
\item[2.] If $k>0$, let  
 $r_{0}, r_{1}, \ldots , r_{l}\in R\cap A^{*}(20m_{k})$ for some $l\geq 1$. Suppose that there are $j, j'$ such that \[\alpha \cdot r_{j}+\beta \cdot r_{j'}\notin W,\] provided that $\alpha ,\beta \in F$ are not both zero, where $W= \sum_{i=1}^{k}E(V_{i}, m_{i})$.
 Denote $e(j, k)=e(j, k)(r_{0}, r_{1}, \ldots , r_{t})$.

 Then there exist $m_{k+1}$ such that  for some $l, l'$,  
\[\alpha \cdot  e(l,{m_{k+1}\over m_{k}})+\beta \cdot e(l', {m_{k+1}\over m_{k}}) \notin W',\]  provided that $\alpha ,\beta \in F$ are not both zero,
 where $W'= \sum _{i=1}^{k+1}E(V_{i}, m_{i})$ and $V_{k+1}=L(S({M_{k+1}}^{m_{k+1}\over n})).$ Moreover, $20nm_{k}$ divides $m_{k+1}$.
\end{itemize}
\end{lemma}
\begin{proof} We will proceed by induction on $k$. If $k=0$ then the result follows from Lemma \ref{induction}  applied for  $m=m_{1}$ and matrix $M=M_{1}$.
Let $k\geq 1$ and assume that the thesis is true for all numbers  smaller than $k$;  we will prove it for $k$.
 
Suppose that $k=1$. By the assumption there are $j, j'$  such that $\alpha \cdot r_{j}+\beta \cdot r_{j'}\notin E(V_{1}, m_{1})$ provided that $\alpha ,\beta $ are not both zero.   Let $f, G$ be as in Theorems \ref{ker} and \ref{rowne}; then by Theorem \ref{rowne}
\[\alpha \cdot G(r_{j}) +\beta \cdot G(r_{j'})\notin  \sum_{i=2}^{k}E(W_{i}, m_{i})=0,\] since $k=1$.
   Next we apply Lemma \ref{induction} for matrix $M=G(M_{1}^{20m_{1}\over n})$,  for elements $G(r_{i})$ instead of elements $r_{i}$, and  for $n=20m_{1}$, and we find  $m $ such that if $\alpha , \beta \in F$ are not both zero then
 
\[\alpha \bar {e}(j',{20m\over 20m_{1}}) +\beta  \bar {e}(j,{20m\over 20m_{1}})\notin A^{*}(18m)W_{1}A^{*}(m),\]
 where $\bar {e}(i,{20m\over 20m_{1}})=e((i,{20m\over 20m_{1}})(G(r_{1}), \ldots , G(r_{l}))$ and $W_{1}=S(L(G(M^{m\over 20m_{1}})))=S(L(G(M_{1}^{m\over n})))$.

 Observe that $\bar {e}(i,{m\over m_{1}})=G(e(i,{m\over m_{1}})).$
  Let $m_{2}=m$ 
 then 
\[\alpha G(e(j',{m_{2}\over m_{1}})) +\beta  G(e(j,{m_{2}\over m_{1}}))\notin A^{*}(18m_{2})W_{1}A^{*}(m_{2}),\]

 By Theorem \ref{rowne},
\[\alpha e(j',{m_{2}\over m_{1}}) +\beta e(j,{m_{2}\over m_{1}})\notin W',\]
 hence Lemma \ref{linearmapping} holds. 

Suppose that $k>1$ and assume that the result holds for all numbers smaller than $k$. 
 By the assumption there are $j, j'$  such that $\alpha \cdot r_{j}+\beta \cdot r_{j'}\notin W$ provided that $\alpha ,\beta $ are not both zero. 
  Let $f, G$ be as in Theorems \ref{ker} and \ref{rowne}; then by Theorem \ref{rowne}
\[\alpha \cdot G(r_{j}) +\beta \cdot G(r_{j'})\notin \sum_{i=2}^{k}E(W_{i}, m_{i}),\]  provided that $\alpha ,\beta \in F$ are not both zero.

 Recall that $M'_{i}=G(M_{i})^{20m_{1}\over n}$ and $W_{i}=L(S({M'_{i}}^{m_{i}\over 20m_{1}}))$ for $i=2,3, \ldots , k$.
 Observe that the number of matrices $M'_{i}$  is $k-1$, hence we can apply the inductive assumption to  matrices $M'_{2}, M'_{3}, \ldots , M'_{k}$. Namely we enumerate $M''_{i}=M'_{i+1}$, $m'_{i}=m_{i+1}$ for every $i$ and 
  we apply the inductive assumption to matrices $M''_{i}$ for $i\leq k-1$  and to  numbers $m'_{i}$ and to elements $\bar {r}_{i}=G(r_{i})$ and to $n=20m_{1}$. 
 We  obtain that 
 there is $m=m_{k+1}$ and  $j, j'$ such that  if $\alpha , \beta \in F$ are not both zero then 
\[\alpha \cdot G(e(j, {m_{k+1}\over m_{k}}))+\beta \cdot G(e(j',{m_{k+1}\over m_{k}})) \notin \sum_{i=2}^{k}E(W_{i}, m_{i}),\] 
 since $G(e(j, {m_{k+1}\over m_{k}}))=e(j, {m_{k+1}\over m_{k}})(G(r_{0}), G(r_{1}), \ldots , G(r_{l}))$. 
By Theorem \ref{rowne},  
we get $\alpha \cdot e(j, {m_{k+1}\over m_{k}})+\beta \cdot e(j',{m_{k+1}\over m_{k}})) \notin W',$ 
 unless $\alpha =\beta =0$.\end{proof}

 Let $X_{1}, X_{2}, \ldots $ be matrices as in Theorem \ref{Jacobson}.
 Let $Y_{1}, Y_{2}, \ldots $ be matrices  such that $Y_{i}$ has entries in $A^{*}(2)$ and $X_{i}^{2}=Y_{i}+M(B')$, where $M(B)$ is the set of matrices with entries in $M(B')$. Recall that  Let $B'\subseteq A(2)$ is the linear $F$- space spanned by elements $ax^{i}bx^{j}$ and $bx^{i}ax^{j}$ for all $i,j\geq 0$.

Recall that $e(i,n)(a^{2}, b^{2})$ is the sum of all products of  $n$ elements, such that $i$ of them are equal to  $b^{2}$ and $n-i$ of them are equal to $a^{2}$. 

\begin{theorem}\label{important3} Let $F$ be a countable field, and suppose that Assumption $1$ holds for $F$-algebra $A$. For every $i,n$ denote $e(i,n)=e(i,n)(a^{2},b^{2}).$
Let $Y_{1}, Y_{2}, \ldots $ be as above, then there are natural numbers $m_{1}<m_{2}<\ldots $ such that  
  $20n m_{i}$ divides $m_{i+1}$ for all  $i$ and $40$ divides $m_{1}$. Moreover, for every $n$ there are $j,j'$
 such that $\alpha e(j,n)+\beta e(j',n)\notin T$, provided that $\alpha ,\beta \in F$ are not both zero, where $T=\sum_{i=1}^{\infty }E(V_{i}, m_{i})$ and $V_{i}=L(S(Y_{i}^{m_{i}\over 2})).$ 
\end{theorem}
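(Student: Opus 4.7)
The plan is to iterate Lemma \ref{linearmapping}, building the sequence $m_{1}<m_{2}<\ldots $ together with, at each stage $k$, a finite family $r_{0}^{(k)},\ldots ,r_{L_{k}}^{(k)}\in R\cap A^{*}(20m_{k})$ and a pair of witness indices $j_{k},j_{k}'$ for which every nonzero $F$-linear combination $\alpha r_{j_{k}}^{(k)}+\beta r_{j_{k}'}^{(k)}$ lies outside $W_{k}:=\sum_{i=1}^{k}E(V_{i},m_{i})$.

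For the base case ($k=0$) I would take $r_{0}^{(0)}=a^{2}$, $r_{1}^{(0)}=b^{2}$, both in $A^{*}(2)\cap R$, and note that with $W_{0}=0$ the hypothesis of Lemma \ref{linearmapping} is trivially satisfied because $a^{2},b^{2}$ are $F$-linearly independent in the free algebra $A'$. Applying Lemma \ref{linearmapping} (the $k=0$ case) to the matrix $M_{1}:=Y_{1}$, with grading parameter $n=2$ and using the Moreover clause of Lemma \ref{induction} with $c$ chosen large enough to force $40\mid m_{1}$, produces $m_{1}$ and a pair $\ell,\ell'$. I then set $r_{i}^{(1)}:=e(i,10m_{1})(a^{2},b^{2})\in R\cap A^{*}(20m_{1})$ and $j_{1}=\ell$, $j_{1}'=\ell'$, which meets the inductive hypothesis at level $1$.

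For the inductive step, assume $m_{1},\ldots ,m_{k}$ and the family $r_{i}^{(k)}$ with its witnesses have been constructed. I would apply Lemma \ref{linearmapping} (the $k\ge 1$ case) to the matrix $M_{k+1}:=Y_{k+1}$ and to the family $r_{i}^{(k)}$, selecting $c$ in its Moreover clause so large that $200\,n\,m_{k}$ divides $m_{k+1}$ (and any further divisibility required by the theorem is satisfied). This yields $m_{k+1}$ and new indices $\ell ,\ell '$ with
\[\alpha \, e(\ell,m_{k+1}/m_{k})(r_{0}^{(k)},\ldots)+\beta \, e(\ell',m_{k+1}/m_{k})(r_{0}^{(k)},\ldots)\notin W_{k+1}\]
for all $(\alpha ,\beta )\neq (0,0)$. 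Defining $r_{i}^{(k+1)}:=e(i,m_{k+1}/m_{k})(r_{0}^{(k)},\ldots ,r_{L_{k}}^{(k)})$ and $j_{k+1}=\ell$, $j_{k+1}'=\ell'$ closes the induction. A repeated application of Lemma \ref{e} shows that each $r_{i}^{(k+1)}$ can be rewritten as $e(\tilde i,10m_{k+1})(a^{2},b^{2})$ under an appropriate reindexing, so the witness pair translates into a pair of indices $(j_{k+1},j_{k+1}')$ for which every nonzero $F$-combination of $e(j_{k+1},10m_{k+1})$ and $e(j_{k+1}',10m_{k+1})$ lies outside $W_{k+1}\subseteq T$. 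Since $k$ is arbitrary and, via the Moreover clause, the $m_{i}$'s can be chosen to be divisible by any preassigned natural number at each step, one obtains witnesses $j,j'$ for every relevant $n$ of the form $10m_{k}$; a final observation using the telescoping of Lemma \ref{e} upgrades this to the "for every $n$" statement of the theorem.

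The principal obstacle is the bookkeeping needed to pass from the iterated output $e(\ell ,m_{k+1}/m_{k})(r_{0}^{(k)},\ldots)$ of Lemma \ref{linearmapping} back to single-layer form $e(\cdot ,10m_{k+1})(a^{2},b^{2})$ via Lemma \ref{e}; the reindexing is compatible with $F$-linear combinations, but this compatibility must be checked carefully so that the witnesses produced at stage $k$ correspond to genuine indices in the collapsed form. A milder issue, routine once the Moreover clause of Lemma \ref{induction} is invoked, is to arrange the simultaneous divisibility conditions $40\mid m_{1}$ and $200\,n\, m_{i}\mid m_{i+1}$ while still being able to apply Lemma \ref{linearmapping} at each stage.
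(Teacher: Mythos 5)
Your proposal is correct and takes essentially the same approach as the paper: both iterate Lemma \ref{linearmapping} with $M_i=Y_i$, starting from $r_0=a^2,r_1=b^2$ at level $k=0$, and both use Lemma \ref{e} to identify $e(i,m_{k+1}/m_k)(r_0^{(k)},\ldots)$ with $e(i,10m_{k+1})(a^2,b^2)$ so that no reindexing is actually needed. The only cosmetic difference is in the final reduction of ``for every $n$'' to ``for $n=10m_k$'': the paper cites the decomposition identity of Lemma \ref{known}, whereas you attribute the telescoping to Lemma \ref{e}; the underlying argument is the same.
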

\begin{proof} Notice that $e(j,n)\subseteq A^{*}(2n)$ for every $n$. 
 Observe first that if $u(j,n)\in T$ for some $n$ and all $j\leq n$, then 
 by Lemma \ref{known}  for every $n'>n$ we have $e(j,n')\in T,$ for all $j\leq 2n'$. 
 Therefore, it is sufficient to prove that there are $m_{1}< m_{2}< \ldots $ and $j_{1}, j_{2}, \ldots $ and $j'_{1}, j'_{2}, \ldots $ such that for every $k$,   
 \[\alpha \cdot e(j_{k},10m_{k})+ \beta \cdot e(j' _{k},10m_{k})\notin T,\] provided that $\alpha , \beta \in F$ are not both zero.
  Notice that \[T\cap A^{*}(20m_{k})=\sum_{i=1}^{k}E(V_{i}, m_{i}),\] since all spaces $E(V_{i}, m_{i})$ are homogeneous. 

 We will construct numbers $m_{1}, m_{2}, \ldots $ inductively using Lemma \ref{linearmapping}. By Lemma \ref{linearmapping}, applied for $n=2$ and matrices $M_{i}=Y_{i}$, there is $m_{1}$ and $j, j'$ such that 
$\alpha \cdot e(j, 10m_{1})+\beta \cdot e(j', 10m_{1})\notin T,$ provided that $\alpha, \beta \in F$ are not both zero, moreover 40 divides $m_{1}$. 

Suppose now that for some $k\geq 1$ we constructed elements $m_{1}, \ldots , m_{k}$ such that if $\alpha , \beta \in F$ are not both zero then
 \[\alpha \cdot e(j_{k},10m_{k})+ \beta \cdot e(j' _{k},10m_{k})\notin T.\]
 By Lemma \ref{linearmapping} there are $l, l'$ such that 
\[\alpha \cdot  e(l,{m_{k+1}\over m_{k}})( r_{0}, \ldots , r_{10m_{k}})+\beta \cdot e(l', {m_{k+1}\over m_{k}})( r_{0}, \ldots , r_{10m_{k}}) \notin T,\]
  where $r_{i}=e(i, 10m_{k})$.
  By Lemma \ref{e}, we get 
$e(i,10m_{k+1})=e(i,{m_{k+1}\over m_{k}})( r_{0}, \ldots , r_{10m_{k}}).$
 Consequently, $\alpha \cdot  e(l, 10 m_{k+1})+\beta \cdot e(l', 10m_{k+1}) \notin T,$  therefore we constructed $m_{k+1}$  satisfying the thesis of our theorem. Continuing in this way we construct all elements $m_{i}$.
\end{proof}


\section{ Nility}

Let $A^{*}$ be a subalgebra of $A$ generated by elements $ax^{i}ax^{j}$ and $bx^{i}bx^{j}$ for all $i,j\geq 0$.

 Let $B'\subseteq A(2)$ be a linear $F$- space spanned by elements $ax^{i}bx^{j}$ and $bx^{i}ax^{j}$ for all $i,j\geq 0$.
 Let $B=\sum_{i=0}^{\infty }A(2i)B'A$. Observe that $A=A^{*}+B$ and $A^{*}\cap B=0$.

 In this chapter we  denote $e(i,n)=e(i,n)(a^{2}, b^{2})$ to be the sum of all products of  $n$ elements, such that $i$ of them are equal to  $b^{2}$ and $n-i$ of them are equal to $a^{2}$, so $e(k,n)\in A^{*}(2n).$
 Recall that an ideal in $A$ is homogeneous if it is homogeneous with respect to the gradation given by assigning gradation $1$ to elements $a$ and $b$ and gradation $0$ to element $x$.
\begin{lemma}\label{nil} 
 Suppose that $J$ is a homogeneous  ideal in $A$ such that  $R/R\cap J$ is a nil algebra. 
 Then there is $m>0$ such that $e(k, m)\in J+B$, for every $0\leq k\leq m$. 
\end{lemma}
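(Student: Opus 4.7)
The plan is to pass to the commutative abelianization $\rho : R \to F[a, b]$ of the free algebra $R = F\langle a, b\rangle$ and exploit that under $\rho$ the noncommutative symmetric sum $e(k, m)(a^2, b^2)$ collapses to the single monomial $\binom{m}{k} a^{2(m-k)} b^{2k}$, which is easy to handle.

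First I would note that $\ker\rho$ is the two-sided ideal of $R$ generated by the commutator $ab - ba$, and verify that $\ker\rho \subseteq B$. The latter holds because $ab, ba \in B'\subseteq B$, while the decomposition $A = A^*\oplus B$ (together with $A^*\cdot A^*\subseteq A^*$ and $A^*B + BA^* + BB \subseteq B$) makes $B$ a two-sided ideal of $A$; hence $\ker\rho = R^1(ab-ba)R^1 \subseteq R^1 B R^1 \subseteq B$. Next, because $R/(R\cap J)$ is nil, both $\bar a$ and $\bar b$ are nilpotent in the quotient, so there is $N$ with $a^N, b^N \in R\cap J$. The image $\hat K := \rho(R\cap J)$ is an ideal of $F[a, b]$ containing $a^N$ and $b^N$, and therefore contains the entire ideal $(a^N, b^N)$.

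Now fix any $m \geq N$ and $k \in \{0, \ldots, m\}$. Under $\rho$, all $\binom{m}{k}$ noncommutative summands of $e(k, m)(a^2, b^2)$ collapse to a single monomial, yielding $\rho(e(k, m)) = \binom{m}{k}\, a^{2(m-k)} b^{2k}$. If $\binom{m}{k} = 0$ in $F$, then $e(k, m) \in \ker\rho \subseteq B \subseteq J+B$. Otherwise, by pigeonhole either $2(m-k) \geq N$ or $2k \geq N$, so $a^{2(m-k)}b^{2k} \in (a^N, b^N) \subseteq \hat K$, hence $\rho(e(k, m)) \in \rho(R\cap J)$. Picking $j \in R\cap J$ with $\rho(j) = \rho(e(k, m))$ gives $e(k, m) - j \in \ker\rho \subseteq B$, so $e(k, m) = j + (e(k, m) - j) \in J + B$.

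For the \emph{moreover} part I would use the identity $e(k, m+1) = e(k, m)\,a^2 + e(k-1, m)\,b^2$ (the $k_1 = m$, $m_1 = 1$ case of Lemma~\ref{known}, with the usual conventions $e(-1, m) = e(m+1, m) = 0$). Writing $e(k, m) = u + v$ with $u \in J$, $v \in B$, one has $e(k, m)\,a^2 = u a^2 + v a^2 \in J + B$ since $J$ is an ideal and $BA \subseteq B$; the same applies to $e(k-1, m)\,b^2$, so $e(k, m+1) \in J + B$, and the conclusion for every $m' > m$ follows by induction. The only point requiring care is the structural verification that $B$ contains $ab - ba$ and is a two-sided ideal of $A$; once that is in hand, the rest of the argument is essentially mechanical.
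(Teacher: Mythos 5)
Your proof is correct, and it takes a genuinely different route from the paper's. The paper argues directly in $R$: since $R/(R\cap J)$ is nil, $(a+b^2)^n\in J$ for all $n\geq m$; expanding this as a sum of homogeneous pieces $v(k,n+k)$ (sums of products of $k$ copies of $b^2$ and $n-k$ copies of $a$, unpaired) and invoking the \emph{homogeneity} of $J$ shows each $v(k,2m)\in J$; finally $e(k,m)-v(k,2m)\in B$ because any leftover term has a mismatched odd-even pair. Your argument instead abelianizes, observing that the commutator ideal of $R$ lands inside $B$, uses nilpotency of $a$ and $b$ separately to get $a^N,b^N\in R\cap J$, and then pulls the commutative monomial $\binom{m}{k}a^{2(m-k)}b^{2k}$ back through $\rho$. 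Two small trade-offs: your route needs the extra case when $\binom{m}{k}$ vanishes in $F$ (the paper's argument never touches binomial coefficients), but in exchange your argument never uses the homogeneity of $J$ — the paper's does, essentially, to separate out the components $v(k,n+k)$ — so yours proves a marginally stronger statement. Your ``moreover'' step via the recurrence $e(k,m+1)=e(k,m)a^2+e(k-1,m)b^2$ (a special case of Lemma~\ref{known}) and the fact that $J$ and $B$ are right $A$-modules is also a cleaner induction than the paper's ``repeat the argument for $m'$.'' One structural point you flag and handle correctly: the claim $\ker\rho\subseteq B$ does rest on $B'\subseteq B$ and on $B$ being a two-sided ideal of $A$; with the natural reading of the definition of $B$ (so that the $i=0$ term contributes $B'$ itself) this is exactly the decomposition $A=A^*\oplus B$ with $A^*$ a subalgebra and $B$ absorbing, as you state.
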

\begin{proof}
 By assumption, there is a number $m$ such that for every
 $n\geq m$ we have  $(a+b^{2})^{n}\in J$.
Let $v(k,n)$ be the sum of all products of $k$ elements $b^{2}$ and $n-2k$ elements $a$; then $v(k,n)\in A(n).$
 Observe that 
 \[(a+b^{2})^{n}=\sum_{0\leq k\leq {n\over 2}}v(k, n+k)\in J,\] and since $J$ is homogeneous and $v(k, n+k)\in A(n+k)$ it follows that
 \[v(k,n+k)\in J\] for every natural $k\leq {n\over 2},$ and every $n\geq m$.

Therefore $v(k,2m)=v(k, (2m-k)+k)\in J$ for every $0\leq k\leq m$, since $2m-k\geq m$. 
 Observe that $e(k,m)-v(k,2m)\in B$, so $e(k,m)\in B+J$.
\end{proof}
\begin{theorem}\label{Jacobson2} Let notation be as in Theorem \ref{Jacobson}, and denote $Q=\sum_{i=1}^{\infty }S_{i}$. 
  Then there is a homogeneous ideal $J$ in $A$ which is a platinum ideal,   $A/J$ is Jacobson radical and $J$ is contained in $L(Q)$.  Moreover, $JA'\subseteq J$ so $J$ is a right ideal in $A'$.
\end{theorem}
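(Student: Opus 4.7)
The plan is to take the ideal $I$ produced by Theorem~\ref{Jacobson} and enlarge it to its ``platinum closure'' by averaging over all the automorphisms $\gamma_t$. Concretely, I would define
\[
J \;=\; \sum_{t\in F}\gamma_t(I),
\]
and then check each of the four conclusions separately.

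First I would verify that each $\gamma_t$ restricts to an $F$-algebra automorphism of $A$: since $\gamma_t(ax^i)=a(x+t)^i$ is a polynomial in the generators $ax^j$ of $A$, and similarly for $bx^i$, we have $\gamma_t(A)\subseteq A$; as $\gamma_{-t}$ is an inverse, $\gamma_t$ is an automorphism of $A$. Because $\gamma_t$ fixes $a,b$ and only shifts $x$ (which has gradation $0$), it also preserves the gradation, so $\gamma_t(A(n))=A(n)$. From these two facts it will follow immediately that each $\gamma_t(I)$ is a homogeneous two-sided ideal of $A$ and a left ideal of $A'$ (using that $I$ itself has these properties by Theorem~\ref{Jacobson}). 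Hence $J$, as a sum of such ideals, is a homogeneous two-sided ideal of $A$ and a left ideal of $A'$.

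Next I would check the platinum property. For any $s\in F$,
\[
\gamma_s(J) \;=\; \sum_{t\in F}\gamma_s\gamma_t(I) \;=\; \sum_{t\in F}\gamma_{s+t}(I) \;=\; J,
\]
so $J$ is a platinum ideal. The containment $J\subseteq L(Q)$ is essentially by definition: since $I\subseteq Q$ by Theorem~\ref{Jacobson}, we get $\gamma_t(I)\subseteq \gamma_t(Q)\subseteq L(Q)$ for each $t$, hence $J\subseteq L(Q)$ by Lemma~\ref{jeden}.

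The last point is that $A/J$ is Jacobson radical. Since $I\subseteq J$, we have a surjective ring homomorphism $A/I\twoheadrightarrow A/J$, and by Theorem~\ref{Jacobson} the source is Jacobson radical; as the Jacobson radical property passes to homomorphic images (every element is already quasi-invertible in $A/I$, so its image is quasi-invertible in $A/J$), we conclude that $A/J$ is Jacobson radical. I do not expect any serious obstacle here: the only routine thing to double-check is that $\gamma_t$ genuinely stabilizes $A$ (and respects the gradation) so that ``$\gamma_t(I)$ is an ideal in $A$'' really does hold; everything else is bookkeeping with the formula $J=\sum_{t\in F}\gamma_t(I)$.
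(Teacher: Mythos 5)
Your proposal is correct and takes essentially the same approach as the paper: both set $J=\sum_{t\in F}\gamma_t(I)=L(I)$ for the ideal $I$ from Theorem~\ref{Jacobson} and verify the four properties directly, using $\gamma_s\gamma_t=\gamma_{s+t}$ for the platinum closure and the passage of quasi-invertibility to the quotient $A/I\twoheadrightarrow A/J$. The only difference is cosmetic: you first establish that $\gamma_t$ is a grading-preserving automorphism stabilizing $A$ and then conclude each $\gamma_t(I)$ is an ideal, whereas the paper verifies the ideal property by an explicit element computation (writing $r\gamma_t(s)=\gamma_t(\gamma_{-t}(r)s)$), which is the same fact unpacked.
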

\begin{proof} By Theorem \ref{Jacobson}, there is an ideal $I$ in $A$ such that $I\subseteq Q$. Denote $J=L(I)$; then $L(I)=\sum_{t\in F}\gamma _{t}(I)$. Observe that $I\subseteq Q$ implies $L(I)\subseteq L(Q)$. 
  We claim that $L(I)$ is an ideal in $A$, and a right  ideal in $A'$. We need to show that if $\alpha \in L(Q)$ then  
 $r\alpha , \alpha r'\in L(I)$, for every $r\in A$ and $r'\in A'$.
 Since $\alpha \in L(Q)$ then $\alpha =\sum _{t\in W}\gamma _{t}(s_{t})$ for some finite subset $W$ of $F$, and where $s_{t}\in I$. 
  Observe that $r\gamma _{t}(s)=\gamma_{t}(\gamma _{-t}(r)s_{t})\in \gamma_{-t}(I)\subseteq L(I)$ and 
$\gamma _{t}(s)r'=\gamma _{t}(s\gamma _{-t}(r'))\subseteq \gamma _{t}(I)\subseteq L(I)$, since by Theorem \ref{Jacobson}, $I$ is a right ideal in $A'.$
 By  Lemma \ref{jeden}, $L(I)$ is a platinum ideal in $A$. 
 Since $A/I$ is a Jacobson radical then $A/L(I)$ is a Jacobson radical, so we can set $J=L(I)$.
\end{proof}


\section{Assumption $1$ implies Theorem \ref{main}}

 The aim of this section is to prove the following.
\begin{theorem}\label{main2} Let $F$ be a field, and suppose that Assumption $1$ holds for $F$-algebra $A$.
 Then there is an $F$-algebra $Z$ and a derivation $D$ on $Z$ such that the differential polynomial ring $Z[y; D]$ is Jacobson radical but $Z$ is not nil.
\end{theorem}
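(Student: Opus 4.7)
The plan is to realize $Z$ as the quotient $P/\bar{I}'$ of the subring $P\subseteq A'$ from Section~4, with derivation $D(p)=xp-px$, and to identify $Z[y;D]$ with a quotient $\bar A/J_1$ using the isomorphism $\bar A\cong P[y;D]$ provided by Lemmas~\ref{useful1} and~\ref{writing}. First, enumerate the matrices $X_1,X_2,\ldots$ of Lemma~\ref{p} and write $X_i^2=Y_i+M(B')$, where $Y_i$ has entries in $A^*(2)$ and $M(B')$ has entries in $B'$. Choose a single sequence $m_1<m_2<\cdots$ that simultaneously satisfies the divisibility and size constraints of Theorem~\ref{Jacobson2} (namely $20m_i\mid m_{i+1}$ and $m_i>\alpha(X_i)$) and of Theorem~\ref{important3}. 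The inductive construction behind Theorem~\ref{important3} (via Lemma~\ref{linearmapping} and Lemma~\ref{induction}) leaves enough freedom to impose the extra constraint $m_i>\alpha(X_i)$ at each step. Then Theorem~\ref{Jacobson2} produces a homogeneous platinum ideal $J$ of $A$, contained in $L(Q)$ where $Q=\sum_i S_i$, such that $A/J$ is Jacobson radical.

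Next, extend $J$ to $\bar A$ by setting $J_1=J+xJ+x^2J+\cdots$. By (the appropriate sidedness version of) Lemma~\ref{mily}, $J_1$ is a two-sided ideal of $\bar A$, $\bar A/J_1$ is Jacobson radical, and $J_1\cap R=J\cap R$. Because $J$ is platinum, a binomial expansion of $(x+t)^k$ shows $\gamma_t(J_1)\subseteq J_1$, so $J_1$ is a platinum subspace of $A'$; clearly $J_1x\subseteq J_1$. Lemma~\ref{two} then yields $J_1=\bar{I}'+\bar{I}'x+\bar{I}'x^2+\cdots$ with $\bar{I}'=P\cap J_1$. Under the isomorphism $\bar A\cong P[y;D]$ (Lemma~\ref{useful1}) combined with the decomposition $\bar A=P+Px+Px^2+\cdots$ (Lemma~\ref{writing}), the ideal $J_1$ corresponds to $\bar{I}'[y;D]$, and so $\bar A/J_1\cong (P/\bar{I}')[y;D]$. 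Setting $Z:=P/\bar{I}'$, the ring $Z[y;D]$ is Jacobson radical.

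It remains to prove $Z$ is not nil. The key inclusion is $J\subseteq T+B$, where $T=\sum_i E(V_i,m_i)$ and $V_i=L(S(Y_i^{m_i/2}))$. Unfold $X_i^{m_i}=(Y_i+M(B'))^{m_i/2}=Y_i^{m_i/2}+(\text{cross terms})$: the pure term contributes to $E(V_i,m_i)$ after applying the platinum closure $L$, while every cross term has at least one $B'$ factor and lands in $B$, since $B$ contains every monomial with a mixed $(a,b)$-pair and is absorbed by $A$ on either side. One then checks $A(2k)V_iA(2l)\subseteq A^*(2k)V_iA^*(2l)+B$ using the splitting $A=A^*+B$ in even gradation and $V_i\subseteq A^*(m_i)$. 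Combining these, $L(Q)\subseteq T+B$, hence $J\subseteq T+B$. By Theorem~\ref{important3}, for every $n$ there is $j$ with $e(j,n)\notin T$; since $e(j,n)\in A^*$ and $A^*\cap B=0$, this strengthens to $e(j,n)\notin T+B\supseteq J+B$. The contrapositive of Lemma~\ref{nil} gives that $R/(R\cap J)$ is not nil. Any witness $r\in R$ with $r^n\notin J$ for all $n$ lies in $P$, and $R\cap J_1=R\cap J$ gives $r^n\notin\bar{I}'$ for all $n$, so $r+\bar{I}'$ is not nilpotent in $Z$.

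The main obstacle is the containment $J\subseteq T+B$: it calls for careful bookkeeping to see that both the prefix/suffix factors $A(j\cdot 20m_i-2m_i)$ and $A(m_i)$ in the definition of $S_i$ and the cross terms in $(Y_i+M(B'))^{m_i/2}$ split cleanly into an $A^*$-part that lands in $E(V_i,m_i)$ and a $B$-part that is absorbed. A subsidiary issue is reconciling the left/right-ideal convention between Theorem~\ref{Jacobson2} and Lemma~\ref{mily}; the symmetric version of Lemma~\ref{mily} is proved by the same argument with the roles of the two sides swapped.
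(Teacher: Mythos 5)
Your proposal is correct and follows essentially the same route as the paper: choose $m_1<m_2<\cdots$ compatible with both Theorem~\ref{Jacobson2} and Theorem~\ref{important3}, extend $J$ to $J_1=J+xJ+x^2J+\cdots$ via Lemma~\ref{mily}, identify $\bar A/J_1$ with a differential polynomial ring over $Z=P/(P\cap J_1)$, and show $Z$ is not nil by combining $J\subseteq T+B$, Theorem~\ref{important3}, and Lemma~\ref{nil}. The one place you deviate cosmetically is the identification step: you assemble the isomorphism $\bar A/J_1\cong (P/\bar I')[y;D]$ by hand from Lemma~\ref{useful1}, Lemma~\ref{writing}, and Lemma~\ref{two}, whereas the paper invokes Theorem~\ref{useful2} (its quotient version of Lemma~\ref{useful1}) directly and then observes that the image of $Z[y;D]$ in $A'/J'$ is exactly $\bar A/J'$. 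These are the same argument in different packaging. Your explicit remarks about reconciling the sidedness convention and about being free to demand $m_i>\alpha(X_i)$ in the inductive construction of Theorem~\ref{important3} are both legitimate points that the paper leaves implicit (the paper's recurring ``left ideal in $A'$'' is in fact a ``right ideal'' in the sense $IA'\subseteq I$, consistent with the hypothesis of Lemma~\ref{mily}).
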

\begin{proof} 
 Let $m_{1}, m_{2}, \ldots $ be as in Theorem \ref{important3} and denote  $T=\sum_{i=1}^{\infty }E(V_{i}, m_{i})$ and $V_{i}=L(S(Y_{i}^{m_{i}\over 2})).$ 
 By Theorem \ref{important3} for every $n$ there are $j, j'$
 such that $\alpha \cdot e(j,n)+\beta \cdot e(j', n)\notin T$, provided that $\alpha ,\beta \in F$ are not both zero.
 Observe that since $e(j,n)\in A^{*}$ and $T\subseteq A^{*}$ and $A^{*}\cap B=0$, it follows that 
for every $n$ there are $j, j'\leq n$
 such that \[\alpha \cdot e(j,n)+\beta \cdot e(j', n)\notin T+B,\] provided that $\alpha ,\beta \in F$ are not both zero.

 By Theorem \ref{Jacobson2} applied for such $m_{1}, m_{2}, \ldots $ we get that  
  there is a homogeneous ideal $J$ in $A$ which is a platinum ideal,   $A/J$ is Jacobson radical and $J$ is contained in $L(Q)$, where $Q=\sum_{i=1}^{\infty }S_{i}$  as in Theorem \ref{Jacobson}.  Moreover, $JA'\subseteq J$ so $J$ is a right ideal in $A'$.
 Denote $J^{(*)}=J+xJ+x^{2}J+\ldots =\sum_{i=0}^{\infty }x^{i}J$, then $J^{(*)}$ is a platinum ideal in $A'$. 
 Let $ A^{(*)}=A+xA+x^{2}A+\ldots $; then $A^{(*)}$ is an $F$-algebra and $J^{(*)}$ is an ideal in $A^{(*)}$. By Lemma \ref{mily}, $ A^{(*)}/J^{(*)}$ is Jacobson radical.
  In addition, if $r+J$ is not a nilpotent in $A/J$ for some $r\in R$, then $r+J^{(*)}$ is not a nilpotent in $ A^{(*)}/J^{(*)}$. 
 
 We will now show that $R/{R\cap J}$ is not a nil algebra.
 Observe now that $L(S_{i})\subseteq \sum_{j=1}^{\infty}A(j\cdot 20m_{i}-2m_{i})L(S'_{i})A(m_{i})A^{1}$, where $S'_{i}=S({X_{i}}^{m_{i}})$ (since $L(A(j))\subseteq A(j)$ for any $j$), and that  $X_{i}^{2}=Y_{i}+B_{i}$, where $B_{i}$ are matrices with entries in $B$. Moreover $A(j)\subseteq A^{*}(j)+B$ for every even $j$, by the definition of $B$.
 It follows that $L(S_{i}A^{1})\subseteq E(V_{i}, m_{i})+B$ for every $i$. By the first part of this proof we get that 
 for every $n$ there are $j, j'\leq n$ such that $\alpha \cdot e(j,n)+\beta \cdot e(j', n)\notin T+B$. On the other hand $ J\subseteq  L(Q)=\sum_{i=1}^{\infty }L(S_{i})\subseteq T+B$. 
It follows that \[\alpha \cdot e(j,n)+\beta \cdot e(j', n)\notin J\]  provided that $\alpha ,\beta \in F$ are not both zero. By Lemma \ref{nil} we get that $R/R\cap J$ is not a nil algebra. So there is $r\in R$ such that $r+J$ is not nilpotent in $A/J$. By Lemma \ref{mily}, $r+J^{(*)}$ is not nilpotent in $A^{(*)}/J^{(*)}$.

 Let $P$ be as in  Theorem \ref{useful2}; so if $a\in P$ then $ax-xa\in P$, $R\subseteq P$  and $P$ is the smallest subring of $A'$ with this property.
 We can apply Theorem \ref{useful2} to $I=J^{(*)}$ and let $Z=P/\bar {J}$ where $\bar J=P\cap J^{(*)}$. Then $Z[y; D]$ is a differential polynomial ring with $y(r+ \bar {J})-(r+ \bar {J})y=D(r)+\bar {J}$, where $D(r+\bar {J})=xr-rx+\bar {J}$.
 By Theorem \ref{useful2}, $Z[y;D]$ can be embedded  in $A'/J^{(*)}$ and 
 the image of $Z[y;D]$ equals $A^{(*)}/J^{(*)}$ where $A^{(*)}=A+xA+x^{2}A+\ldots $, hence the image of $Z[y; D]$ is Jacobson radical, and so $Z[y; D]$ is Jacobson radical.  
 
It remains to show that $Z$ is not a nil ring.  By Lemma \ref{nil} we get that $R/R\cap J$ is not a nil algebra. So there is $r\in R$ such that $r+J$ is not nilpotent in $A/J$.  By Lemma \ref{mily} we get  that $r^{n}\notin J^{(*)}$, and therefore $r^{n}\notin \bar {J}=J^{(*)}\cap P$ for $n\geq 1$. Therefore $r+\bar {J}$ is not nilpotent in $P/ \bar {J}$, so $P/\bar {J}$ is not nil. 
\end{proof}

\section{Assumption $3$} 

 Let $C$ be a matrix with entries $a_{i,j}\in F$ and let $r\in A$ then $Ca$ will denote the matrix with entries $a_{i,j}\cdot r$. 

Let  $M=\sum_{i=1}^{\xi }A_{i}a_{i}$, where for each $i$, $A_{i}$ is a matrix with coefficients in $F$, and $a_{1}, a_{2}, \ldots , a_{\xi }\in A$. 
  By $H$ we will denote the $F$-algebra generated by matrices $A_{i}$, and by $W$ we will denote the Wedderburn radical of $H$, which is  the sum of all nilpotent ideals in $H$. Notice that $W$ is the largest nilpotent ideal in $H$, since $H$ is finite dimensional. By $s$ we will denote the smallest  natural number such that  $W^{s}=0$.

 We will say that a matrix $M$ satisfies {\bf Assumption 3} if the following holds: 
 \begin{itemize}
\item [1. ]   $M=\sum_{i=1}^{\xi }A_{i}a_{i}$, where for each $i$, $A_{i}$ is a matrix with coefficients in $F$, and $a_{1}, a_{2}, \ldots , a_{\xi }$ are elements from $A$ which are linearly independent over $F$ and have the same degree -- say $\alpha $.  
Moreover,  $a_{i}\in A(\alpha )\cap \langle x\rangle $ for each $i\leq \xi $  (where $\langle x\rangle $ is the ideal generated by $x$ in $A'$).
 \item [2.] $A_{1}=e$, where $e$ is a matrix such that $e^{2}=e$,  $r-er\in W$,  $r-re\in W$.  where $W$ is as at the beginning of this section. 
\end{itemize}

 Let $A'^{1}[y]$ be the polynomial ring in variable $y$ over ring $A'^{1}$.

\begin{delfin}\label{w(m,n)} For a variable $y$ (commuting with all elements from $A'$) let $\gamma_{y }:A'\rightarrow A'^{1}[y]$ be a homomorphism of  algebras such that $\gamma_{y}(a)=a, \gamma_{y}(b)=b$ and $\gamma_{y }(x)=x+y $.
 
 For a variable $y$ (commuting with all elements from $A'$) we can write \[\gamma _{y}(M)=\sum_{i=0}^{t}y^{i}M_i,\]  where all entries of $M_{i}$ are homogeneous elements of $A$ of degree $\alpha $, similarly to the entries of matrix $M$. 

Let matrix  $M=\sum_{i=1}^{\xi }A_{i}a_{i}$ satisfy Assumption $3$. Let $\gamma _{y}(M)=\sum_{i=0}^{t}y^{i}M_i.$  Denote 

\[w(n,m)=\sum_{i_1+i_2+\ldots +i_m=n}M_{i_1}M_{i_2}\ldots M_{i_m}.\]
\end{delfin}

 The following lemma is known, and is another variant of  Lemma $7$ in \cite{nil}.
\begin{lemma}\label{known3}
 Let notation be as above. 
 Let $m, n$ be natural numbers; then for every $k<m$
\[w(n, m)=\sum_{i=0}^{n}w(i,k)w(n-i,m-k).\]
\end{lemma}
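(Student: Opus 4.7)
The plan is to prove this identity by a direct reindexing of the defining multi-index sum, exactly parallel to the argument for the scalar-coefficient version (Lemma \ref{known}). Fix $k<m$. Starting from
\[w(n,m)=\sum_{i_1+i_2+\cdots+i_m=n}M_{i_1}M_{i_2}\cdots M_{i_m},\]
the key observation is that any tuple $(i_1,\ldots,i_m)$ of nonnegative integers summing to $n$ decomposes uniquely by setting $i=i_1+\cdots+i_k$, so that $i_{k+1}+\cdots+i_m=n-i$ with $0\le i\le n$. Conversely, each $i\in\{0,1,\ldots,n\}$ together with a composition of $i$ into $k$ nonnegative parts and a composition of $n-i$ into $m-k$ nonnegative parts recovers a unique tuple.

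Next I would group the product into its first $k$ and last $m-k$ factors, using associativity of multiplication in the matrix ring (but not commutativity, since the matrices $M_j$ have entries in the noncommutative algebra $A'$):
\[M_{i_1}\cdots M_{i_m}=(M_{i_1}\cdots M_{i_k})(M_{i_{k+1}}\cdots M_{i_m}).\]
Summing first over tuples with $i_1+\cdots+i_k=i$ fixed, and then over $i$, gives
\[w(n,m)=\sum_{i=0}^{n}\Bigl(\sum_{i_1+\cdots+i_k=i}M_{i_1}\cdots M_{i_k}\Bigr)\Bigl(\sum_{j_1+\cdots+j_{m-k}=n-i}M_{j_1}\cdots M_{j_{m-k}}\Bigr),\]
and the two inner sums are by definition $w(i,k)$ and $w(n-i,m-k)$, yielding the claim.

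There is essentially no obstacle here: the identity is just the combinatorial content of associativity in the matrix ring $M_N(A')$, and is the same Cauchy-type convolution argument already used to prove Lemma \ref{known} for the generating series of the elements $e(j,m)$. The one point worth flagging, so as not to inadvertently appeal to commutativity, is that the bijection between tuples and pairs of compositions respects the linear order of positions $1,\ldots,m$, so that the factors on the left of each product correspond to the first $k$ indices; this is precisely why the ordering in $w(i,k)w(n-i,m-k)$ comes out correctly.
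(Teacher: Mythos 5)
Your proof is correct and is exactly the standard argument one would expect: the paper itself gives no proof of Lemma~\ref{known3}, merely citing it as a known variant of Lemma~\ref{known} (whose proof is referenced to \cite{nil}), and your bijection between $m$-tuples summing to $n$ and pairs (a $k$-composition of $i$, an $(m-k)$-composition of $n-i$) together with associativity is precisely that argument. Your caution about preserving the position order so that no commutativity is invoked is apt and correctly handled.
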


\begin{lemma}\label{basis} Let $F$ be an infinite field. Let matrix  $M=\sum_{i=1}^{\xi }A_{i}a_{i}$ satisfy Assumption $3$ and $\gamma _{y}(M)=\sum_{i=0}^{t}y^{i}M_i$. Then for each $j$, 
\[ M_{j}\subseteq \sum_{i=1}^{\xi }A_{i}A.\]  
Moreover, for every $n,m, w(m,n)\subseteq HA$ where $H$ is the algebra generated by matices $A_{1}, \ldots ,A_{\xi }$. 
\end{lemma}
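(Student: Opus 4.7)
The plan is to verify both claims by a direct calculation that exploits (a) linearity of $\gamma_{y}$ on $F$-entries and (b) centrality of $F$ in any matrix algebra over $A$. I will handle the two assertions in turn.

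For the first assertion, I would observe that $\gamma_{y}$ restricted to $F$ is the identity, so when we apply $\gamma_{y}$ entry-wise to any matrix $A_{i}$ with entries in $F$ we get back $A_{i}$. Consequently
\[
\gamma_{y}(M) \;=\; \sum_{i=1}^{\xi} A_{i}\,\gamma_{y}(a_{i}).
\]
Next, each $a_{i}\in A$, and since $A = R + RA'$ is stable in the sense that $\gamma_{y}(A) \subseteq A[y]$ (because $\gamma_{y}$ fixes $R$ and sends $x$ to $x+y$, so expanding $(x+y)^{k}$ shows that each $y^{j}$-coefficient of $\gamma_{y}(a_{i})$ lies back in $A$), I can write $\gamma_{y}(a_{i}) = \sum_{j\ge 0} y^{j} c_{i,j}$ with $c_{i,j}\in A$. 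Comparing coefficients of $y^{j}$ in $\gamma_{y}(M)=\sum_{j}y^{j}M_{j}$ yields
\[
M_{j} \;=\; \sum_{i=1}^{\xi} A_{i}\, c_{i,j} \;\in\; \sum_{i=1}^{\xi} A_{i}A,
\]
which is the first claim.

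For the moreover part, the essential point is that the entries of $A_{i}$ lie in $F$, and $F$ is central. Hence for every $c\in A$ and every $A_{i}$ we have $A_{i}c = cA_{i}$ (each $F$-entry commutes with $c$). Therefore, given any product $M_{i_{1}}M_{i_{2}}\cdots M_{i_{m}}$, expanding each factor using the previous step gives a sum of terms of the form
\[
A_{l_{1}}c_{l_{1},i_{1}}\,A_{l_{2}}c_{l_{2},i_{2}}\,\cdots\,A_{l_{m}}c_{l_{m},i_{m}}
\;=\; A_{l_{1}}A_{l_{2}}\cdots A_{l_{m}}\,\cdot\, c_{l_{1},i_{1}}c_{l_{2},i_{2}}\cdots c_{l_{m},i_{m}}
\]
after sliding all of the $A_{l}$'s to the left. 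The first factor lies in $H$ (a product of generators of $H$), and the second factor lies in $A$. Summing over $i_{1}+\cdots+i_{m}=n$ and over all choices of the $l_{k}$'s therefore places $w(n,m)$ inside $HA$, as required.

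The proof is essentially routine; no delicate step is hidden. The only thing one must be careful about is that the coefficients of $\gamma_{y}(a_{i})$ with respect to $y$ genuinely land in $A$ rather than just in $A'$, which is the reason I spelled out that expansion via $A = R + RA'$. Everything else is bookkeeping with the identities $\gamma_{y}(A_{i})=A_{i}$ and $A_{i}c = cA_{i}$.
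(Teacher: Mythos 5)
Your proof is correct, and for the first assertion it takes a genuinely different (and simpler) route than the paper. The paper substitutes $y=\alpha$ for $\xi$ distinct values $\alpha\in F$ and then inverts a Vandermonde matrix to recover each $M_{j}$ as an $F$-linear combination of the matrices $\gamma_{\alpha}(M)=\sum_{i}A_{i}\gamma_{\alpha}(a_{i})$, each of which visibly lies in $\sum_{i}A_{i}A$. You instead avoid any substitution: since $\gamma_{y}(A_{i})=A_{i}$, you pull the scalar matrices out, expand $\gamma_{y}(a_{i})=\sum_{j}y^{j}c_{i,j}$ and argue that the $c_{i,j}$ land in $A$ (using $A=RA'+R$ and $\gamma_{y}(R)=R$, $\gamma_{y}(A')\subseteq A'^{1}[y]$), then simply match $y^{j}$-coefficients to get $M_{j}=\sum_{i}A_{i}c_{i,j}$. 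This is cleaner, and unlike the Vandermonde argument it makes no use of the hypothesis that $F$ is infinite for this part. For the second assertion, where the paper says only ``by the definition of $w(n,m)$,'' you supply the missing but necessary observation that the $F$-entries of each $A_{l}$ are central in $A$, so one can commute each scalar $c_{l_{k},i_{k}}$ past the later $A_{l}$'s to collect a single factor $A_{l_{1}}\cdots A_{l_{m}}\in H$ on the left and a single factor in $A$ on the right. That is exactly what is needed, and you were right to spell it out.
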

\begin{proof}
 Recall that $\gamma _{y}(M)=\sum_{i=0}^{t}y^{i}M_i$; therefore by substituting $y=\alpha _{i}$ for  $\alpha  \in F$, we get 
  $\gamma _{\alpha  }(M)=\sum_{i=0}^{t}\alpha ^{i}M_i$. By applying this for $\alpha =t_{1}, \ldots , \alpha =t_{\xi }$ for various $t_{1}, \ldots , t_{\xi}\in F$, and using the fact that a Vandermonde matrix  is invertible that
  each $M_{i}$ is a linear combination of matrices $\gamma _{\alpha }(M)$ for various $\alpha \in F$.  Recall that $M=\sum_{i=1}^{\xi }A_{i}a_{i}$, hence $\gamma _{\alpha }(M)=\sum_{i=1}^{\xi }A_{i}\gamma _{\alpha }(a_{i}).$ Therefore each $M_{j}$ is a linear combination of matrices $A_{i}\gamma _{\alpha }(a_{i}) \subseteq A_{i}A$. By the definition  of elements $w(n,m)$ (Definition \ref{w(m,n)}) we get $w(n,m)\in HA$. \end{proof}

\begin{lemma} \label{wl}  Let $F$ be an infinite field and let $n$ be a natural number. 
 Let $Q$ be the linear  subspace of $A$  spanned by all entries of matrices  $w(n,m)$ for $n=0, 1,  \ldots $;  then  all entries of the matrix $M^{m}$ belong to $Q$.
 Moreover $Q=S(L(M^{m}))$.  
\end{lemma}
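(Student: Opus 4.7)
The plan is to reduce the statement to a Vandermonde-style linear algebra argument using the ring homomorphism property of $\gamma_y$. The key observation is that since $\gamma_y \colon A' \to A'^1[y]$ is a ring homomorphism (extended to matrices entrywise), we have
\[ \gamma_y(M^m) = \gamma_y(M)^m = \Bigl(\sum_{i=0}^{t} y^i M_i\Bigr)^m = \sum_{n\geq 0} y^n\, w(n,m), \]
and this is a polynomial in $y$ of degree at most $tm$ with matrix coefficients $w(n,m)$. This identity is the bridge between the ``$w$-description'' and the ``$\gamma_t$-description'' of the space.

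Specializing this identity at $y=0$ yields $M^m = w(0,m)$, so every entry of $M^m$ is an entry of $w(0,m)$ and therefore lies in $Q$; this handles the first assertion. Specializing at $y=t$ for any $t\in F$ gives
\[ \gamma_t(M^m) = \sum_{n\geq 0} t^n\, w(n,m), \]
so each matrix $\gamma_t(M^m)$ is an $F$-linear combination of the matrices $w(n,m)$, and hence its entries lie in $Q$. Taking the $F$-span of entries over all $t\in F$ gives $S(L(M^m)) \subseteq Q$.

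For the reverse inclusion $Q \subseteq S(L(M^m))$ I would use that $F$ is infinite together with the fact that the $y$-degree of $\gamma_y(M^m)$ is bounded by $tm$. Choose $tm+1$ pairwise distinct scalars $t_0, t_1, \ldots, t_{tm} \in F$ and write the system
\[ \gamma_{t_j}(M^m) = \sum_{n=0}^{tm} t_j^n\, w(n,m), \qquad j=0,1,\ldots,tm. \]
The coefficient matrix $(t_j^n)_{j,n}$ is a Vandermonde matrix over $F$, hence invertible, so each $w(n,m)$ is an $F$-linear combination of the matrices $\gamma_{t_j}(M^m)$. Therefore every entry of every $w(n,m)$ lies in $S(L(M^m))$, which gives $Q\subseteq S(L(M^m))$ and completes the proof.

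The argument is essentially routine once the identity $\gamma_y(M)^m = \sum_n y^n w(n,m)$ is in place; there is no real obstacle, only a bookkeeping point that needs care, namely verifying that $\gamma_y$ extended entrywise to matrices is still a ring homomorphism (so that $\gamma_y(M^m)=\gamma_y(M)^m$), which follows immediately from the fact that $\gamma_y$ is an $F$-algebra homomorphism and matrix multiplication is formed from sums of products of entries.
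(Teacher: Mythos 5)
Your proof is correct and follows essentially the same route as the paper's: expand $\gamma_y(M^m) = \sum_n y^n w(n,m)$, specialize at scalars of $F$ to get $S(L(M^m)) \subseteq Q$, and invert a Vandermonde system built from $tm+1$ distinct scalars to get the reverse inclusion. You are in fact slightly more careful than the paper in two small ways: you handle the first assertion explicitly by specializing at $y=0$ to see $M^m = w(0,m)$, and you correctly count $tm+1$ interpolation points where the paper's phrasing suggests only $tm$.
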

\begin{proof} By assumption $\gamma _{y} (M^{m})=(\sum_{i=0}^{t}y^{i}M_i)^{m},$ hence for any $\alpha \in F$, when substituting $y=\alpha $ we get 
\[\gamma _{\alpha }(M^{m})=(\sum_{i=0}^{t}\alpha ^{i}M_i)^{m}=\sum _{i=0}^{ m\cdot t}\alpha ^{i}w(i,m).\] Therefore every entry of a matrix 
$\gamma _{\alpha }(M^{m})$ is in the subspace generated by entries of matrices $w(i,n)$ for various $i$; hence $S(L(M^{m})\subseteq Q$. 

On the other hand,  let $\alpha _{1}, \alpha _{2}, \ldots , \alpha _{tm}$  be non-zero distinct elements of $F,$
 then we can substitute $\alpha =\alpha _{j}$ into the equation
\[ \sum _{i=0}^{t\cdot m}\alpha ^{i}\cdot w(i,m)=\gamma _{\alpha }(M^{m}).\]
 We can then use the Vandermonde matrix argument to show that 
 each $w(i,m)$ is a linear combination of elements  $\gamma _{\alpha _{i} }(M^{m})$ for $1\leq i\leq tm$.
 Therefore  $Q\subseteq  S(L(M^{m}))$.
\end{proof}



\section{Embedding platinum subspaces in bigger subspaces}

 Let notation be as in the previous section.  Let $M$ be a matrix satisfying Assumption $3.$ Let  $H$, $W$, $e$ be as in Assumption $3$.

\begin{delfin}
Recall that $H$ can be considered as a subalgebra of some matrix algebra over $F$. Let $H'$ denote algebra $H+FI$, where $I$ is the identity matrix. 

 Recall that \[w(n,m)=\sum_{i_1+ i_2+\ldots +i_m=n}M_{i_1}M_{i_2}M_{i_{3}}\ldots M_{i_m}.\]
Let $s$ be such that $W^{s}=0$.
 
Let $f_1, f_2, \ldots $ be elements from $H'.$  We define element $(f_{1}, f_{2}, \ldots) *w(m,n)$ for $j=1,2, \ldots $ in the following way:

\[(f_{1}, f_{2}, \ldots )*w(n,m)=\sum_{i_1+ i_2+\ldots +i_m=n}f_{1}M_{i_1}f_{2}M_{i_2}f_{3}M_{i_3}\ldots f_{m}M_{i_m}.\]
\end{delfin}
\begin{lemma}\label{zerowy} Let notation be as above and let $m$ be a natural number larger than $s$ (where $W^{s}=0$).
 Let $u=(f_1, f_2, \ldots )$ be such that $f_1, f_2, \ldots , f_{s}\in W\cup \{I-e\}$;  then $u*w(n,m)=0$, for every $n$.
\end{lemma}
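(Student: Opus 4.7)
The plan is to expand each summand $f_1 M_{i_1}\cdots f_m M_{i_m}$, separate the portion lying in the matrix algebra $H'$ (with entries in $F$) from the $A$-valued scalar portion, and then use $W^s=0$ to kill the $H'$-part. By Lemma~\ref{basis}, for every $i$ one may write $M_i = \sum_k A_k\, b_{k,i}$ with $b_{k,i}\in A$, treated as right scalars. Since the $A_k$ and each $f_j$ have entries in $F$, which is central in $A$, any such $A$-scalar $b_{k,i}$ commutes past any $H'$-matrix occurring to its right. Consequently each summand can be reorganized as
\[
f_1 M_{i_1}\,f_2 M_{i_2}\cdots f_m M_{i_m}
=\sum_{k_1,\dots,k_m}\bigl(f_1 A_{k_1}\,f_2 A_{k_2}\cdots f_m A_{k_m}\bigr)\, b_{k_1,i_1}b_{k_2,i_2}\cdots b_{k_m,i_m},
\]
so it is enough to show that the $H'$-valued factor $f_1 A_{k_1}\cdots f_m A_{k_m}$ vanishes for every choice of indices.

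For this, I would verify that $f_j A_{k_j}\in W$ whenever $1\le j\le s$. If $f_j\in W$, then $f_j A_{k_j}\in WH\subseteq W$ because $W$ is an ideal in $H$. If $f_j = 1-e$, then $f_j A_{k_j}= A_{k_j}-eA_{k_j}$, which lies in $W$ by the relation $r-er\in W$ from Assumption~$3$ applied to $r = A_{k_j}$. Since $m>s$, I can factor
\[
f_1 A_{k_1}\cdots f_m A_{k_m}
= \bigl(f_1 A_{k_1}\cdots f_s A_{k_s}\bigr)\bigl(f_{s+1}A_{k_{s+1}}\cdots f_m A_{k_m}\bigr),
\]
and the first bracket lies in $W^s = 0$. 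Hence every summand vanishes, and summing over the compositions $i_1+\cdots+i_m = n$ yields $u*w(n,m)=0$.

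The one step that needs slight care is the commutation argument that separates the $H'$-matrix part from the $A$-scalar part; once the centrality of $F$-entries in $A$ is invoked, the remainder is pure bookkeeping, so I do not foresee any substantive obstacle.
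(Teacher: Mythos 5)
Your proof is correct and follows the same underlying idea as the paper's, namely that the first $s$ blocks $f_j M_{i_j}$ land in $W$ so that $W^s=0$ forces every summand to vanish. The paper's own proof is a one-liner ("$f_k M_{i_k}$ is a matrix with entries in $W$") that elides the decomposition $M_i=\sum_k A_k b_{k,i}$ from Lemma~\ref{basis} and the commutation of the $A$-scalars past the $F$-matrices; you make both of these steps explicit, which is a cleaner and more rigorous rendering of the same argument.
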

\begin{proof} Observe that for every $k$, $f_{k}M_{i_{k}}$ is a matrix with entries in $W;$ hence $u*w(n,m)$ has entries in 
 $W^{s}=0$.
\end{proof}
\begin{delfin}\label{elements}  For $n\geq 1$ define $t_{n}=(f_1, f_2, \ldots )$, where $f_1=f_2=\ldots =f_{n-1}=I-e$, $f_n=e$ and $I=f_{n+1}=f_{n+2}=\ldots .$
 For $n\geq 0$ define $t'_{n}=(f_1, f_2, \ldots )$, where $f_1=f_2=\ldots = f_{n}=I-e$ and 
 $I=f_{n+1}=f_{n+2}=\ldots .$
\end{delfin}
\begin{lemma}\label{inclu}  Let notation be as above. Then for every $m,n$, 
\[ w(n, m)=\sum _{i=0}^{s}t_{i}*w(n,m).\]
\end{lemma}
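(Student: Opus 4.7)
The plan is to establish the identity by a telescoping argument based on the trivial decomposition $I = e + (1-e)$ inserted successively in each matrix slot of the product defining $w(n,m)$, and then to kill the accumulated remainder via Lemma \ref{zerowy}.

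Concretely, starting from $w(n,m) = (I, I, I, \ldots) * w(n,m)$, I would insert $I = e + (1-e)$ in slot $1$, writing
\[ w(n,m) = (e, I, I, \ldots)*w(n,m) + (1-e, I, I, \ldots)*w(n,m), \]
so that the first summand is $t_{1}*w(n,m)$ and the second has $1-e$ in slot $1$ and $I$ elsewhere. Applying the same decomposition in slot $2$ of that second summand peels off a $t_{2}*w(n,m)$ contribution and leaves a remainder with $1-e$ in slots $1$ and $2$. Iterating this step $s+1$ times (one more than the nilpotency index of $W$) yields
\[ w(n,m) = \sum_{i=0}^{s} t_{i}*w(n,m) + (\underbrace{1-e,\ldots,1-e}_{s+1},\,I,I,\ldots)*w(n,m), \]
with the natural shifted indexing on the $t_i$'s so that $t_0$ corresponds to placing the single $e$ in slot $1$.

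The last step is to invoke Lemma \ref{zerowy}: the remainder tuple starts with $s+1$ copies of $1-e \in W \cup \{1-e\}$, so in particular its first $s$ entries lie in $W \cup \{1-e\}$, and hence the remainder vanishes (assuming $m > s$, which is the only case where the statement is nontrivial; otherwise the tail has matrix part in $W^{m}$ and must be handled by a direct inspection that folds it into one of the $t_i$ terms). The one genuinely fiddly point is reconciling the telescope indexing with the author's convention for $t_i$, in particular assigning a meaning to $t_0$ compatible with the formula in the definition of $t_n$; both the telescope identity itself and the application of Lemma \ref{zerowy} are routine, the former being immediate linearity of $*$ and the latter being essentially a direct quotation of the hypothesis of that lemma.
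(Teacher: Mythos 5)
Your proof matches the paper's own argument exactly: the same telescope inserting $I = e + (1-e)$ slot by slot, yielding $w(n,m)=\sum t_i*w(n,m) + t'_{s+1}*w(n,m)$, with the residual tuple of $s+1$ copies of $1-e$ killed by Lemma~\ref{zerowy}. The indexing fiddliness you flag is real (the paper's statement uses $\sum_{i=0}^{s}$, its proof ends with $\sum_{i=0}^{s+1}$, and the telescope actually produces $\sum_{i=1}^{s+1}$ with $t_{s+1}*w(n,m)=0$), and the tacit restriction $m>s$ needed to invoke Lemma~\ref{zerowy} is likewise left implicit in the paper -- so your proposal is, if anything, more explicit about these caveats than the source.
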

\begin{proof} 
 Observe first that $w(n,m)=(I, I, \ldots )* w(n,m)=t_{1}*w(n,m)+t'_{1}*w(n,m)$;
 then $t'_{1}*w(n,m)=t_{2}*w(n,m)+t'_{2}*w(n,m)$, and for every $i$, 
$t'_{i}*w(n,m)=t_{i+1}*w(n,m)+t'_{i+1}*w(n,m)$. By summing all these equations for $i=1,2, \ldots , s$, and the equation 
 $w(n,m)=t_{1}*w(n,m)+t'_{1}*w(n,m)$, 
 we get $w(n,m)+\sum_{i=1}^{s}t'_{i}*w(n,m)=\sum_{i=1}^{s+1}t_{i}*w(n,m)+t'_{i}*w(n,m).$ By Lemma \ref{zerowy}, $t_{s+1}=t'_{s+1}=t_{s}=0.$
 It follows that  
 $w(n,m)=\sum _{i=0}^{s}t_{i}*w(n,m)$. 
\end{proof}

\begin{delfin} (Definition of set $D(M)$) 

 Let matrix $M$ satisfy Assumption $3$, and let $H$, $W$, $e$ be as in Assumption $3$. 

Fix  $E_{1}, E_{2}, \ldots , E_{\beta }$ -  a basis of $W$ for some $\beta $ (recall that $W$  is the Wedderburn radical of $H$).

Let $E=\{I-e, E_{1}, E_{2}, \ldots , E_{\beta }\}.$ 

 Let $ P_{1}, P_{2}, \ldots , P_{\beta '}$ be such that $E_{1}, \ldots , E_{\beta }, P_{1}, \ldots , P_{\beta '}$ span algebra $H$. 
   We can assume that for every $i$  \[P_{i}e=P_{i}.\] It follows because $H=He+H(I-e)\subseteq He+W$. Moreover $e=he$ for $h=e$. 

 Recall that $W^{s}=0$. 

Let $0\leq q<s$. We say that element $(f_{1}, f_{2} , \ldots )$ is good and has distance $q+1$  if   
 $f_{1}, \ldots , f_{q}\in E,$ $ f_{q+1}\in \{e, P_{1}, P_{2}, \ldots ,P_{\beta '}\}$ and $ I=f_{q+2}= f_{q+3}=\ldots .$

 The set of all good elements will be denoted $D(M)$. 
\end{delfin}
\begin{lemma} \label{finite}
Let $M$ be a matrix satisfying Assumption $3$. 
 The set $D(M)$ is finite. 
\end{lemma}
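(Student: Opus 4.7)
The proof is essentially a counting argument, since the definition of a good element restricts each coordinate to a finite set drawn from $E$, $\{e,P_{1},\ldots,P_{\beta'}\}$, or $\{I\}$. The plan is as follows.

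First I would unpack the definition of $D(M)$. A good element $(f_{1},f_{2},\ldots)$ of distance $q+1$ (with $0\le q<s$) has only finitely many coordinates that are not equal to $I$, namely the first $q+1$ of them. By Assumption $3$, $W$ is finite dimensional over $F$ only in the sense that it has a finite basis $E_{1},\ldots,E_{\beta}$ (which is what we use); so the set $E=\{1-e,E_{1},\ldots,E_{\beta}\}$ has exactly $\beta+1$ elements, and the set $\{e,P_{1},\ldots,P_{\beta'}\}$ has exactly $\beta'+1$ elements. In particular both are finite.

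Next I would bound the number of good elements of a given distance. For fixed $q$ with $0\le q<s$, each of $f_{1},\ldots,f_{q}$ has at most $|E|=\beta+1$ choices, $f_{q+1}$ has at most $\beta'+1$ choices, and $f_{q+2},f_{q+3},\ldots$ are forced to equal $I$. Consequently the number of good elements of distance $q+1$ is at most $(\beta+1)^{q}(\beta'+1)$, which is a finite number.

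Finally, since the distance $q+1$ ranges over the finite set $\{1,2,\ldots,s\}$, the set $D(M)$ is a finite union of finite sets:
\[
|D(M)|\;\le\;\sum_{q=0}^{s-1}(\beta+1)^{q}(\beta'+1)\;<\;\infty.
\]
Hence $D(M)$ is finite. There is no real obstacle here; the only subtle point is to note that even though each tuple $(f_{1},f_{2},\ldots)$ is formally an infinite sequence, being good forces all but finitely many entries to be the identity $I$, so the enumeration is genuinely finite.
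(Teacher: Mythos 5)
Your proof is correct and matches the paper's approach; the paper states the same idea in a single line (every element of $D(M)$ has distance at most $s$), while you make the finiteness of $E$ and of $\{e,P_1,\ldots,P_{\beta'}\}$ explicit and give the bound $\sum_{q=0}^{s-1}(\beta+1)^q(\beta'+1)$.
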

\begin{proof} It follows because every element in $D(M)$ has distance at most $s$.
\end{proof}

\begin{lemma}\label{inny}
Let $M$ be a matrix satisfying Assumption $3$.
 Then, for every $m,n$, $w(n,m)$ is a linear combination of elements $u*w(n,m)$ with $u\in D(M)$.  
\end{lemma}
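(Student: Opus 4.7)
My plan is to show that Lemma \ref{inny} is essentially an immediate consequence of Lemma \ref{inclu}: the telescoping identity already expresses $w(n,m)$ as a sum of elements of the form $t_i * w(n,m)$, and each of these $t_i$ is, in fact, a good element. Thus no further combinatorial work is needed beyond checking the definitions.

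More concretely, the first step is to apply Lemma \ref{inclu} to write
\[
w(n,m) \;=\; \sum_{i=1}^{s} t_{i} * w(n,m),
\]
where $t_i=(f_1,f_2,\ldots)$ has $f_1=\cdots=f_{i-1}=1-e$, $f_i=e$, and $f_{i+1}=f_{i+2}=\cdots=I$. The second step is to verify, using only the definition of good elements, that each such $t_i$ belongs to $D(M)$. Setting $q=i-1$, which lies in $[0,s)$ since $i\in\{1,\ldots,s\}$, one checks that $f_1,\ldots,f_q$ all equal $1-e$ and hence lie in $E=\{1-e,E_1,\ldots,E_\beta\}$; that $f_{q+1}=f_i=e\in\{e,P_1,\ldots,P_\beta\}$; and that $I=f_{q+2}=f_{q+3}=\cdots$. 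These are precisely the three conditions defining a good element of distance $q+1=i$. Hence $t_i\in D(M)$ for each $i$ in the range of summation.

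Combining these two observations, $w(n,m)$ is an $F$-linear combination (in fact a sum with coefficients $1$) of elements of the form $u * w(n,m)$ with $u\in D(M)$, which is exactly the claim. There is no real obstacle here; the only subtle point worth flagging is that Lemma \ref{inclu} implicitly requires $m$ to be larger than $s$ (since its proof relies on Lemma \ref{zerowy}, which has that hypothesis). Thus the same restriction is inherited by this lemma, but for the intended applications this is harmless because $m$ is always chosen large. The finiteness of $D(M)$ recorded in Lemma \ref{finite} then ensures the decomposition involves only finitely many terms, as needed in subsequent arguments.
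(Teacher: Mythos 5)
Your proof is correct and is essentially the paper's own argument, which simply cites Lemma \ref{inclu} and notes that each $t_i\in D(M)$; you have merely spelled out the verification that $t_i$ is good of distance $i$. Your observation that the implicit hypothesis $m>s$ is inherited from Lemma \ref{zerowy} is a fair and accurate caveat.
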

\begin{proof}
 It follows from Lemma \ref{inclu}, since every $t_{i}\in D(M)$.
\end{proof}

Fix $m$. Let $u\in D(M)$. Recall that $u*w(n,m)$ is a matrix with coefficients in $A$. By $u*w(n,m)_{k,l}$ we denote the element of $A$ which is at the $k,l$ entry of matrix $u*w(n,m)$.
By $[u*w(n,m)]_{k,l}$ we will
 mean the quintuple  $(u,n,m,k,l)$.

 Recall that by $I$ we denote the identity element in $H'$ (which can be also seen as the identity matrix when we embed $H$ into a matrix ring.)

\begin{delfin} (Definition of  ordering)

We first denote an ordering on elements of $E$ : $E_{1}<E_{2}< \ldots <E_{\beta }$.
 We then define $E_{\beta }<I -e<e<P_{1}$ and $P_{1}<P_{2}< \ldots  <P_{\beta '}$.
 We can now define a lexicographical  ordering on the good set $D(M)$.

In particular, if  the distance of $v$ is larger than the distance of $u$ then $v<u$;   for example $(I-e, e, I, \ldots )<(P_{1}, I, I, \ldots )$.

 Let $M$ be a matrix satisfying Assumption $3$. 
Fix $m$, and  define the following ordering on quintuples $[u*w(n,m)]_{k,l}$ with $u\in D(M)$, and $k,l\leq d$ where $M$ is a $d$ by $d$ matrix:
\begin{itemize}

\item[1.]  If the distance of $v$ is larger than the distance of $u$ then  $[v*w(n,m)]_{k,l}<[u*w(n',m)]_{k',l'},$ for every $n,n', k,k',l,l'\leq d$.

\item[2.]  If the distance of $v$ is the same as the  distance of $u$ and $n<n'$ then 
$[v*w(n, m)]_{k,l}<[u*w(n', m)]_{k',l'},$ for every $k,k',l,l'\leq d$.

\item[3.]  If the distance of $u$ is the same as the distance of $v$ and $v$ is smaller than $u$ then 
$[v*w(n, m)]_{k,l}<[u*w(n,m)]_{k',l'},$ for every $k,k',l,l'\leq d$.

\item[4.] If $(k,l)<(k',l')$ with respect of lexicographical ordering then   
$[u*w(n, m)]_{k,l}<[u*w(n,m)]_{k',l'},$ for every $u, n$.

\end{itemize}  
\end{delfin}
 Fix $m$. Notice that this is an ordering on the set of quintuples $[u*w(n,m)]_{k,l}$ with $u\in D(M)$ and $k,l\leq d$, where $M$ is a $d$ by $d$ matrix. 

\section{Sets $B_{m}(M)$ and $Z_{m}(M)$}

 We now define sets $B_{m}(M)$ and $Z_{m}(M)$. 

\begin{delfin} (Definition of  set $B_{m}(M)$) 
Let $M$ be a matrix satisfying Assumption $3$, and $m, n$ be natural numbers.  

We will say that quintuple $[u*w(n, m)]_{k,l}$ is in the set $B_{m}(M)$ if  $u*w(n,m)_{k,l}$  is a linear combination over $F$ of elements $v*w(n',m)_{k',l'}$
 such that   $[v*w(n',m)]_{k',l'}<[u*w(n,m)]_{k,l}$. 
\end{delfin}

\begin{delfin}\label{z} (Definition of  set $Z_{m}(M)$)
Let $M$ be a matrix satisfying Assumption $3$, and $m, n$ be  natural numbers.  

Recall that $R$ is the algebra generated by elements $a$ and $b$. 

We will say that quintuple $[u*w(n,m)]_{k,l}$ is in the set $Z_{m}(M)$  if there is element $r\in R$ such that:

\begin{itemize}  
\item[1.] This element $r$ is a linear combination of 
elements $v*w(n',m)_{k',l'}$
 such that   $[v*w(n',m)]_{k',l'}\leq [u*w(n,m)]_{k,l}$.

\item [2.] $r$ is not a linear combination of elements $v*w(n',m)_{k',l'}$
 with $[v*w(n',m)]_{k',l'}< [u*w(n,m)]_{k,l}$.
\end{itemize}

 Notice that for a given  quintuple $[u*w(n,m)]_{k,l}=(u,n,m,k,l)\in Z_{m}(M)$, there may be many elements $r$  
 satisfying Properties  [1] and [2] above. However, we fix one  such element $r$ and call it 
 $r(u, n, m,k,l)$. 

\end{delfin}

\begin{lemma}\label{zand} Fix $m$. Let $M$ be a matrix satisfying Assumption $3$.
  The sets $Z_{m}(M)$ and $B_{m}(M)$ are disjoint, that is  
$Z_{m}(M)\cap B_{m}(M)=0$.
\end{lemma}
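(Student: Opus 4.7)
The plan is to argue by contradiction directly from the definitions: if a quintuple $[u*w(n,m)]_{k,l}$ lies in $B_m(M)$, then the element $u*w(n,m)_{k,l}$ itself already lies in the $F$-span of the strictly smaller elements, which collapses the two subspaces used in the definition of $Z_m(M)$.

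\textbf{Step 1. Fix notation for the relevant subspaces.} For the fixed quintuple $[u*w(n,m)]_{k,l}$ define
\[ V_{<} = \operatorname{span}_F\bigl\{\, v*w(n',m)_{k',l'}\;:\;[v*w(n',m)]_{k',l'}<[u*w(n,m)]_{k,l}\,\bigr\}\subseteq A, \]
\[ V_{\leq} = \operatorname{span}_F\bigl\{\, v*w(n',m)_{k',l'}\;:\;[v*w(n',m)]_{k',l'}\leq[u*w(n,m)]_{k,l}\,\bigr\}\subseteq A. \]
Clearly $V_{\leq}=V_{<}+F\cdot u*w(n,m)_{k,l}$, so membership in $B_m(M)$ translates to $u*w(n,m)_{k,l}\in V_{<}$, and membership of the quintuple in $Z_m(M)$ is equivalent to the existence of some $r\in R$ with $r\in V_{\leq}$ and $r\notin V_{<}$.

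\textbf{Step 2. Derive the contradiction.} Assume for contradiction that $[u*w(n,m)]_{k,l}$ lies in $B_m(M)\cap Z_m(M)$. From $[u*w(n,m)]_{k,l}\in B_m(M)$ we have $u*w(n,m)_{k,l}\in V_{<}$; therefore the generator added in passing from $V_{<}$ to $V_{\leq}$ is already in $V_{<}$, hence $V_{\leq}=V_{<}$. But from $[u*w(n,m)]_{k,l}\in Z_m(M)$ one obtains $r\in R$ satisfying $r\in V_{\leq}\setminus V_{<}$, which is impossible once $V_{\leq}=V_{<}$. This contradiction shows that the two sets are disjoint, which is precisely $Z_m(M)\cap B_m(M)=0$.

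\textbf{What could go wrong.} The argument is essentially formal: the only substantive check is that the conditions in the definitions of $B_m(M)$ and $Z_m(M)$ really refer to $F$-linear combinations inside the same ambient space $A$ (so that ``$\leq$'' and ``$<$'' span the subspaces $V_{\leq}$ and $V_{<}$ introduced above) and that quintuples with different $(u,n,m,k,l)$ but producing equal elements of $A$ do not cause any ambiguity. They do not, because the conditions are stated in terms of membership of an element of $A$ in an $F$-span, which is insensitive to how a generating set is indexed. So there is no real obstacle; the lemma follows immediately from the definitions.
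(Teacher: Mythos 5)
Your proof is correct and follows essentially the same substitution argument as the paper: both show that if $u*w(n,m)_{k,l}$ already lies in the span of strictly smaller elements, then the ``$\leq$''-span and the ``$<$''-span coincide, contradicting condition (2) in the definition of $Z_m(M)$. The explicit introduction of the subspaces $V_{<}$ and $V_{\leq}$ just makes the substitution step tidier; it is not a different route.
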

\begin{proof} Suppose on the contrary, that  there is some element in $[u*w(n,m)]_{k,l}\in Z_{m}(M)\cap B_{m}(M)$, then there is  $r\in R$ which is a linear combination of some elements  $v*w(n',m)_{k',l'}$
 such that   $[v*w(n',m)]_{k',l'}\leq [u*w(n,m)]_{k,l}$. Because $[u*w(n,m)]_{k,l}\in B_{m}(M)$ then  
 $u*w(n,m)_{k,l}$  is a linear combination 
 of $v*w(n',m)_{k',l'}$
 such that   $[v*w(n',m)]_{k',l'}<[u*w(n,m)]_{k,l}$. Therefore 
$r$ is also a linear combination 
 of $[v*w(n',m)]_{k',l'}$
 such that   $[v*w(n',m)]_{k',l'}<[u*w(n,m)]_{k,l}$, a contradiction with the definition of  set $Z_{m}(M)$.
\end{proof}

\begin{lemma}\label{cardinality} Let notation be as above.
 Let $m$ be a natural number and let $R_{m}(M)$ be the linear space spanned by all elements from $R$ which are a linear combination of elements $u*w(n,m)_{k,l}$ for some $u\in D(M)$ and some $n,k,l$ with $k,l\leq d$ (where $M$ is a $d$ by $d$ matrix).  Then the dimension of the space $R_{m}(M)$ is the same as the cardinality of set $Z_{m}(M)$. 
\end{lemma}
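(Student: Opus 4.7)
The plan is to exhibit an explicit basis of $R_m(M)$ indexed by $Z_m(M)$. Let $V \subseteq A$ denote the $F$-linear span of the set $\{u*w(n,m)_{k,l} : u \in D(M),\ n \geq 0,\ k,l \leq d\}$, so that by definition $R_m(M) = R \cap V$. Write $\psi(q) = u*w(n,m)_{k,l}$ for a quintuple $q = [u*w(n,m)]_{k,l}$, and for each such $q$ let $V_{\leq q}$ (resp.\ $V_{<q}$) be the $F$-span of $\psi(q')$ over $q' \leq q$ (resp.\ $q' < q$). The ordering defined in the previous section well-orders the countable set of quintuples (there are finitely many distance levels, and inside each level the order type is $\omega$), so $V = \bigcup_q V_{\leq q}$ is an ascending filtered union.

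The key observation is that at each step the quotient $V_{\leq q}/V_{<q}$ is at most one-dimensional, spanned by the image of $\psi(q)$, and vanishes precisely when $q \in B_m(M)$. Since $R \cap V_{\leq q}/R \cap V_{<q}$ injects into $V_{\leq q}/V_{<q}$, it is zero or one-dimensional; comparing with the definition of $Z_m(M)$ shows that $\dim\bigl(R \cap V_{\leq q}/R \cap V_{<q}\bigr) = 1$ iff $q \in Z_m(M)$. Thus, as $q$ ranges over the well-ordering, the dimension of $R \cap V_{\leq q}$ either stays fixed or grows by one, and it grows by one precisely at the quintuples in $Z_m(M)$; Lemma \ref{zand} is what ensures these two cases are internally consistent, since $q \in Z_m(M)$ forces $q \notin B_m(M)$ so that the one-dimensional target really is nonzero.

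From this I would derive the conclusion in either of two equivalent ways. The first is by transfinite induction on $q$: one shows $\dim(R \cap V_{\leq q}) = |\{q' \leq q : q' \in Z_m(M)\}|$ for every $q$, handling successor steps with the increment observation above and limit steps with the fact that the dimension of a filtered union of subspaces is the supremum of the dimensions; taking the supremum over all $q$ gives $\dim R_m(M) = |Z_m(M)|$. The second, more hands-on, route is to choose for each $q \in Z_m(M)$ an element $r_q \in R \cap V_{\leq q} \smallsetminus R \cap V_{<q}$ (which exists by the defining property of $Z_m(M)$) and verify that $\{r_q : q \in Z_m(M)\}$ is a basis of $R_m(M)$: linear independence follows by picking the largest $q$ with nonzero coefficient in any putative relation and taking the image modulo $V_{<q}$, while spanning follows by transfinite induction, writing a given $r \in R \cap V_{\leq q}$ as $\alpha r_q$ plus an element of $R \cap V_{<q}$ whenever the class of $r$ is nonzero in the quotient.

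Nothing in the argument is genuinely difficult; the only subtlety is the bookkeeping at the limit ordinals of the well-ordering (i.e.\ at the transitions between distance levels), and keeping straight that the $\psi(q)$ themselves need not be linearly independent, so the filtration by $V_{\leq q}$, rather than a would-be basis, is the right object to track.
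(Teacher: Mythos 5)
Your proposal is correct and follows essentially the same approach as the paper's proof: assign to each $q \in Z_m(M)$ a representative $r_q \in R$ witnessing membership in $Z_m(M)$, and argue by induction along the quintuple ordering (which is a well-order of countable type, as you note) that these representatives span $R_m(M)$ and are linearly independent. Your ``second, more hands-on route'' is precisely the paper's argument; the filtration/dimension-counting reformulation in your first route is just a repackaging of the same induction. One small point worth flagging in your favor: the paper's proof establishes $R_m(M) = Q_m(M)$ where $Q_m(M)$ is spanned by the chosen $r_q$'s, but does not explicitly check that the $r_q$'s are linearly independent --- a step needed to pass from ``spanned by a set indexed by $Z_m(M)$'' to ``dimension equals $|Z_m(M)|$.'' You supply this check (take the maximal $q$ with nonzero coefficient in a putative relation and reduce modulo $V_{<q}$, contradicting property (2) of the definition of $Z_m(M)$), which tightens the argument.
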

\begin{proof} Let $r(u,n, m,k,l)\in R$ be as in Definition \ref{z}.
Let $Q_{m}(M)$ be the linear space spanned by elements $r(u,n, m,k,l)\in R$ for $(u,n,m,k,l)\in Z_{m}(M)$. We will show that $R_{m}(M)=Q_{m}(M)$. 
Observe first that if $s\in R_{m}(M)$ then  \[s=\sum_{(v,n',m,k',l')\leq (u, n, m, k, l)}\alpha _{(v,n',m,k',l')}\cdot v*w(n',m)_{k',l'},\]
 for some $(u, n, m, k, l)=[u*w(n,m)]_{k,l}$ and some $\alpha _{(v,n', m, k',l')}\in F.$ If we take a presentation of $s$ with $[u*w(n,m)]_{k,l}$ minimal possible, we in addition get  $\alpha _{(u,n,m,k,l)}\neq 0$ and $[u*w(n,m)]_{k,l}\in Z_{m}(M)$.

 Note that if $s=r(u,n,m,k,l)$, then $s\in Z_{m}(M)$. Suppose that $s\neq r(u,n,m,k,l)$; then by Definition \ref{z} there is $\alpha \in F$ such that 
\[s-\alpha \cdot r(u,n,m,k,l)=\sum_{(v,n',m,k',l')<(u, n, m, k, l)}\beta _{(v,n',m,k',l')}\cdot v*w(n',m)_{k',l'}\]
 for some $\beta _{(v,n',m,k',l')}\in F$. Therefore $s-r(u,n,m,k,l)\in R_{m}(M)$.

We will now show that $s\in Q_{m}(M)$ by induction with respect to the ordering of the quintuples $(u,n,m,k,l)\in Z_{m}(M)$.
 
 Let $[u_{0}*w(i,m)]_{t,t'}$ be the minimal quintuple in $Z_{m}(M)$ such that there is $s\in R_{m}(M)$ with $s\notin Q_{m}(M)$. Notice that $r(u_{0},i,m,t,t')\in R_{m}(M)$. Let $s\in R_{m}(M)$ have a presentation as above with $(u,n,m,k,l)=(u_{0},i,m,t,t')$.
  By the definition of $Z_{m}(M)$ we get that for some $\alpha $ the element 
 $s-\alpha \cdot r(u_{0},i,m,t,t')$ is a sum of elements associated to quintuples smaller than $(u_{0},i,m,t,t')$. Notice that 
$s-\alpha \cdot r(u_{0},i,m,t,t')\in R_{m}(M)$, and  by the minimality of 
 $s$ we get that $s-\alpha \cdot r(u_{0},i,m,t,t')\in Q_{m}(M).$
 Since $ r(u_{0},i,m,t,t')\in Q_{m}(M)$, it follows that $s\in Q_{m}(M)$.
%
\end{proof}

\section{Main supporting lemma}

 Let $M$ be a matrix which satisfies Assumption $3$, and let notation be as in the previous section.

\begin{lemma}\label{important5} Let $M$ be a matrix satisfying Assumption $3$. 
Let $m$ be a natural number and let $u=(f_{1}, f_{2}, \ldots )\in D(M).$ For every $k\leq m$, 
\[u*w(n,m)=\sum_{j=0}^{n}((f_{1}, f_{2}, \ldots , f_{k},I,I, \ldots )*w(j,k))\cdot ((f_{k+1}, f_{k+2}, \ldots )*w(n-j, m-k)).\]
\end{lemma}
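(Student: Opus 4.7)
The plan is to prove this by a direct re-indexing of the defining sum, analogous to Lemma \ref{known3} but keeping track of the inserted factors $f_i$. Recall the definition
\[(f_1, f_2, \ldots)*w(n, m) = \sum_{i_1 + \cdots + i_m = n} f_1 M_{i_1} f_2 M_{i_2} \cdots f_m M_{i_m}.\]
I would start from the right-hand side of the claim (or equivalently the left) and split the index of summation $(i_1, \ldots, i_m)$ with $i_1 + \cdots + i_m = n$ according to the partial sum $j := i_1 + \cdots + i_k$, which ranges over $0, 1, \ldots, n$.

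The key step is the observation that for each fixed $j$ the compositions $(i_1, \ldots, i_m)$ with $i_1 + \cdots + i_m = n$ and $i_1 + \cdots + i_k = j$ are in bijection with pairs of compositions: one of $j$ into $k$ parts $(i_1, \ldots, i_k)$, and one of $n - j$ into $m - k$ parts $(i_{k+1}, \ldots, i_m)$. Because the factors $f_i$ and $M_{i_\ell}$ all lie in the same associative ring, the product
\[f_1 M_{i_1} f_2 M_{i_2} \cdots f_k M_{i_k} \cdot f_{k+1} M_{i_{k+1}} \cdots f_m M_{i_m}\]
factors as the product of the first $k$ terms times the remaining $m - k$ terms. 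Summing over the two sub-compositions independently, the first bracket becomes $\sum_{i_1+\cdots+i_k=j} f_1 M_{i_1}\cdots f_k M_{i_k}$, which by definition is exactly $(f_1, \ldots, f_k, I, I, \ldots)*w(j,k)$ (the trailing $I$'s in the tuple are irrelevant since $w(j,k)$ only uses the first $k$ entries). The second bracket is $(f_{k+1}, f_{k+2}, \ldots)*w(n-j, m-k)$ by definition.

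There is no serious obstacle; this is essentially a bookkeeping argument, and the only thing to be careful about is confirming that the definition of $(f_1, \ldots, f_k, I, I, \ldots)*w(j, k)$ uses only the first $k$ entries of the tuple, so padding with $I$'s does not change the value. The whole proof can be presented in a single displayed chain of equalities obtained by the index split above.
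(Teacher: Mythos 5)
Your proposal is correct and takes essentially the same approach as the paper: the paper's one-line proof invokes Lemma~\ref{known3} together with the definition of $*$, and the underlying content of that invocation is precisely the reindexing of compositions $(i_1,\ldots,i_m)$ of $n$ by the partial sum $j=i_1+\cdots+i_k$ that you spell out.
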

\begin{proof} This follows from Lemma \ref{known3} and from the definition of operation $*$.
\end{proof}
\begin{lemma}\label{useful} Let notation be as in Lemma \ref{important5}. Suppose that $u$ has distance $k+1$. Then
\[u*w(n,m)=\sum_{j=0}^{n}((f_{1}, f_{2}, \ldots , f_{k})*w(j,k))\cdot f_{k+1}\cdot w(n-j, m-k).\]
Moreover, for every $t\leq m-k$,
\[w(n-j,m-k)=w(0,t)w(n-j,m-k-t)+\sum_{i=1}^{n-j}w(i, t)w(n-j-i,m-k-t).
]\]
\end{lemma}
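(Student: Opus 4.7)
The plan is to derive both equations as direct specializations of results already proved: Lemma \ref{important5} for the first equation, and Lemma \ref{known3} for the second. No new combinatorics should be required, so the proposal is essentially a verification that the hypotheses line up.

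For the first equation, I would apply Lemma \ref{important5} with this specific value of $k$, which makes sense because $u$ has distance $k+1$, so by the definition of distance we have $f_{k+1} \in \{e, P_1, \ldots, P_{\beta'}\}$ and $I = f_{k+2} = f_{k+3} = \ldots$. The output of Lemma \ref{important5} is
\[u*w(n,m)=\sum_{j=0}^{n}\bigl((f_{1}, \ldots , f_{k},I,I, \ldots )*w(j,k)\bigr)\cdot \bigl((f_{k+1}, f_{k+2}, \ldots )*w(n-j, m-k)\bigr).\]
The trailing $I$'s in the first factor are irrelevant since $w(j,k)$ is a sum of products of exactly $k$ matrices $M_{i}$, so $(f_{1}, \ldots, f_{k}, I, I, \ldots)*w(j,k) = (f_{1}, \ldots, f_{k})*w(j,k)$. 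For the second factor, since $f_{k+2} = f_{k+3} = \ldots = I$, the definition of the $*$ operation gives $(f_{k+1}, I, I, \ldots)*w(n-j, m-k) = f_{k+1}\cdot w(n-j, m-k)$ (the identities simply drop out of each product). Substituting both simplifications yields the first displayed equation.

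For the second equation, I would apply Lemma \ref{known3} with $n$ replaced by $n-j$, $m$ replaced by $m-k$, and the splitting index (the $k$ of Lemma \ref{known3}) replaced by $t$, assuming $t < m-k$ so that the hypothesis is met. That gives
\[w(n-j, m-k) = \sum_{i=0}^{n-j} w(i,t)\, w(n-j-i,\, m-k-t).\]
Separating the $i=0$ summand from the rest produces exactly the asserted identity.

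Since both halves are mechanical rewrites, there is no substantive obstacle. The only mild subtlety is making sure the length bookkeeping for the first equation is correctly interpreted: one must read $(f_{k+1}, f_{k+2}, \ldots)*w(n-j, m-k)$ as starting the indexing of the tuple afresh, so that $f_{k+1}$ pairs with the first factor $M_{i_{1}}$ of $w(n-j, m-k)$, which is consistent with how $*$ was defined and with the statement of Lemma \ref{important5}. With that convention in place the proof is just a two-line calculation for each half.
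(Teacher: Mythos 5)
Your proof is correct and takes essentially the same route as the paper: the first identity is a direct specialization of Lemma \ref{important5} (with the observation that trailing identity entries in the tuple drop out of the $*$-products), and the second identity is Lemma \ref{known3} applied to $w(n-j,m-k)$ with split index $t$, after separating off the $i=0$ term. Your elaboration of how the identities fall out of the $*$-operation makes explicit what the paper leaves implicit, but no new idea is involved.
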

\begin{proof} Since $u$ has distance  $k+1$ then  $u=(f_{1}, f_{2}, \ldots ,f_{k}, f_{k+1}, I, I, \ldots , I)$, where $f_{k+1}\in \{e, P_{1}, \ldots , P_{\beta '}\}$  and by assumptions on $P_{i}$'s we have $f_{k+1}e=f_{k+1}$. The first equation follows from Lemma \ref{important5}. 
   The second equation follows when we apply Lemma \ref{known3} to $w(n-j, m-k)$.
\end{proof}

 Let $M$ be a matrix satisfying Assumption $3$, and let notation be as in Assumption $3$. In particular, $H'$ is an algebra generated by matrices from $H$ and an identity matrix,  $W$ is the Wedderburn radical of $H$ and $W^{s}=0$. Recall that $I$ denotes the identity element in $H'$.

\begin{lemma}\label{previous} Let notation be as above. 
 Let $f_1, f_2, \ldots , f_{k}\in W\cup \{I-e\}$ and let $f_{k+1}\in H'$.
 Let $u=(f_1, f_{2}, \ldots , f_{k+1}, I, I, \ldots )$, and let $m,n>0$, $m>k+1,$ $m>s$. Then 
$u*w(n,m)$  is a linear combination of matrices of the form $u'*w(n,m)$ and $u''*w(n,m)$, where $u'\in D(M)$ and $u'$ 
 has distance $k+1$, and  where $u''$ is of the form $(g_{1}, \ldots , g_{k+1}, I, I, \ldots )$ where $g_{1}, g_{2}, \ldots , g_{k+1}\in E$. 
\end{lemma}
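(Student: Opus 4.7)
The plan is a two-step linearity reduction followed by a short case analysis based on the structure of $f_{k+1}$. Since $W$ is spanned over $F$ by $E_1,\ldots,E_\beta$ and $I-e\in E$, every element of $W\cup\{I-e\}$ lies in the $F$-linear span of $E=\{I-e,E_1,\ldots,E_\beta\}$. Because the operation $*$ is $F$-multilinear in each slot (immediate from its defining formula), it suffices to treat the case in which each $f_i$ with $i\le k$ is a single element of $E$. Similarly, since $H'=FI+H$ and $H$ is spanned by $E_1,\ldots,E_\beta,P_1,\ldots,P_{\beta'}$, I can write $f_{k+1}$ as an $F$-linear combination of $I$, the $E_i$, and the $P_j$, and by linearity again it suffices to consider the three separate subcases $f_{k+1}=P_j$, $f_{k+1}=E_i$, and $f_{k+1}=I$.

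In the first subcase the tuple $u=(f_1,\ldots,f_k,P_j,I,I,\ldots)$ satisfies the definition of $D(M)$ with distance $k+1$: its first $k$ entries lie in $E$ and its $(k+1)$-st entry lies in $\{e,P_1,\ldots,P_{\beta'}\}$. So $u$ itself plays the role of $u'$. In the second subcase, set $g_j=f_j$ for $j\le k$, $g_{k+1}=E_i\in E$, and $g_{k+2}=I\in H'$; then $u=(g_1,\ldots,g_{k+2},I,I,\ldots)$ is already in the form required for $u''$.

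The third subcase is where the one nontrivial identity enters: writing $I=e+(I-e)$ and substituting this expression into the $(k+1)$-st $f$-slot of the defining sum $\sum f_1M_{i_1}\cdots f_kM_{i_k}\cdot I\cdot M_{i_{k+1}}\cdots I\cdot M_{i_m}$ yields
\[
u*w(n,m)=u_a*w(n,m)+u_b*w(n,m),
\]
where $u_a=(f_1,\ldots,f_k,e,I,I,\ldots)$ and $u_b=(f_1,\ldots,f_k,I-e,I,I,\ldots)$. Here $u_a$ has distance $k+1$ and lies in $D(M)$, while $u_b$ has its first $k+1$ entries in $E$ (since $I-e\in E$), so the choice $g_{k+2}=I\in H'$ presents $u_b$ as a valid $u''$. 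Assembling the three subcases produces the required linear-combination expression.

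The hypotheses $m>k+1$ and $m>s$ play only supporting roles: $m>k+1$ ensures that position $k+1$ is actually occupied in the expansion of $w(n,m)$, and $m>s$ is compatible with the observation that whenever $k+1\ge s$ the term $u_b*w(n,m)$ is already zero by Lemma \ref{zerowy} (its first $k+1$ entries all belong to $W\cup\{I-e\}$), which is harmless since the conclusion allows any linear combination. There is no genuine obstacle; the only point requiring care is keeping the case bookkeeping consistent with the intricate definitions of $D(M)$, $E$, and $H'$.
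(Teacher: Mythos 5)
Your proof is correct and follows essentially the same route as the paper: decompose $f_{k+1}$ by $F$-multilinearity of $*$ into components lying in $\{e,P_1,\ldots,P_{\beta'}\}$ (yielding terms $u'\in D(M)$ of distance $k+1$) and components lying in $E$ (yielding terms of the $u''$ form with $g_{k+2}=I$). Your write-up is somewhat more explicit than the paper's, which implicitly absorbs both the reduction of $f_1,\ldots,f_k$ from $W\cup\{I-e\}$ to single elements of $E$ and the splitting $I=e+(I-e)$ into its single ``linear combination of $\alpha_i$ and $\beta_i$'' step.
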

\begin{proof} Observe that $f_{k+1}$ is a linear combination of elements $\alpha _{i}$ and $\beta _{i}$, where $\alpha _{1} \in E=\{E_{1}, \ldots , E_{\beta }, I-e\}$ and $\beta _{i}\in \{e, P_{1}, \ldots , P_{\beta '}\}$
(where notation is as in the definition of the ordering of $D(M)$). Therefore 
$u*w(n,m)$ is a linear combination 
of elements  $u(i)*w(n,m)$ and elements $u(i)*w(n,m)$, where $u(i)=(f_{1}, \ldots , f_{k}, \alpha _{i}, I, I, \ldots )$ and elements $u'(i)=(f_{1}, \ldots , f_{k}, \beta _{i}, I, I, \ldots )$, for some $i$. Observe that $u'(i)$ are in the set $D(M)$ and  have distance $k+1$. On the other hand,  elements $u(i)$ are of the form 
$(g_{1}, \ldots , g_{k+1}, I, I, \ldots )$, where $g_{1}, g_{2}, \ldots , g_{k+1}\in W\cup \{I-e\}$ and $g_{k+2}=I$, as required.
\end{proof}


\section{ Introducing sets $U_{k,t}$ and $V_{k,t}$}

Let $M=\sum _{i=1}^{\xi }A_{i}a_{i}$ be a matrix satisfying Assumption $3$. Let notation be as in Assumption $3$ and in the previous sections. Let $a_{\xi +1}, a_{\xi +2}, \ldots $ be such that $a_{1}, a_{2}, \ldots , a_{\xi}, a_{\xi +1}, \ldots $ is a basis of 
 $A(\alpha )$ (such elements exist by Zorn's lemma). Recall that $a_{1}, \ldots , a_{\xi }\in \langle x\rangle $; hence we can assume that every $a_{i}$ is either in $\langle x\rangle $ or in $R$. Recall that $\langle x\rangle $ is the ideal of $A'$ generated by $x$.

Denote $Q_{t}=\sum _{(i_1, i_2, \ldots , i_{t})\neq (1,1, \ldots ,1)}a_{i_1}a_{i_{2}}\ldots a_{i_t}A.$ 
Let $k, t$ be  natural numbers, and define:
\[V=A(k)a_{1}^{t}A, U=A(k)Q_{t}.\]
Recall that $M$ is a $d$ by $d$ matrix. 
 By $T(U)$ we will denote the set of all $d$ by $d$ matrices with all entries in $U$,  and by $T(V)$ we will denote the set of all $d$ by $d$ matrices whose entries are in $V$.
Recall that $s$ is such that $W^{s}=0$.

\begin{remark}\label{new}
Let $r\in R\cap A(m)$ for some $m>t+k$. Then $r\in U$. 
\end{remark}
\begin{proof} Denote $Q= A(k)(R\cap A(t))A(m-t-k)$, $Q'=A(k)(\langle x\rangle\cap A(t))A(m-t-k).$ Notice that $Q\cap Q'=0$, since $A$ is the free algebra generated by $ax^{i}$, $bx^{i}$ for $i\geq 0$. Let
$U_{1}=U\cap Q$ and $U_{2}=U\cap Q'.$ Observe that  $U\cap A(m)=U_{1}\oplus U_{2}$, 
 because every element among $a_{1}, a_{2}, \ldots $ is either in $R$ or in $\langle x\rangle $.

Notice that if $a\in A(m)$ then $a\in U_{1}+U_{2}+V$.
 Recall that  $U_{2}, V\subseteq \langle x\rangle$, since $a_{1}\in \langle x\rangle $. Let $r=r_{1}+r_{2}+r_{3}$, where $r_{1}\in U_{1}, r_{2}\in U_{2}, r_{3}\in V.$
 Observe that  $r-r_{1}\in Q$ and $r_{2}+r_{3}\in  Q'$; it follows that $r=r'\in U_{1}\subseteq U$.
\end{proof}

Let ${\mathcal G}:A(m)\cap V\rightarrow A(m-t)$  be the linear mapping defined for monomials and then extended by linearity to all elements from $A(m)\cap V$ as follows:
 
 ${\mathcal G}(w{a_1}^{t}w')=ww'$, where $w$ is a monomial from $A(k)$ and $w'$ is a monomial from $A(m-k-t)$. 
  We can then  extend mapping ${\mathcal G}$ to matrices: if $M$ is a matrix with entries $a_{i,j}$ then ${\mathcal G}(M)$ is the matrix with entries ${\mathcal G}(a_{i,j})$.  

\begin{lemma}\label{G}
 Let $n,m,k,t$ be natural numbers with $m>k+t, m>t+s$, $t\geq 1$. Let ${\mathcal G}$ be defined as before this theorem. Let $u=(f_{1}, f_{2}, \ldots )\in D(M)$ have  distance $k+1$, so 
 $f_1, \ldots, f_{k}\in E$ and $f_{k+1}=he$ for some $h\notin W$. 
 Then 
$u*w(n,m)=\bar {v}+\bar u$  for some $\bar {v}\in T(V)$, $\bar {u}\in T(U)$.  
Moreover,  ${\mathcal G}({\bar v})=u*w(n,m-t)+s$, where $s$ is a linear combination of elements of the form $u'*w(n-i,m-t)$ for $i>0$ and where $u'\in D(M)$ has either the same distance as $u$ or larger distance than $u$.
\end{lemma}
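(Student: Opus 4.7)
The plan is to decompose $u*w(n,m)$ by isolating the contribution in which a factor $a_{1}^{t}$ appears in a fixed middle position, and then to track this decomposition through the linear map $G$. First I would apply the first identity of Lemma~\ref{useful} to write $u*w(n,m)=\sum_{j}X_{j}\,f_{k+1}\,w(n-j,m-k)$ with $X_{j}=(f_{1},\ldots,f_{k})*w(j,k)$, and then substitute the second identity of the same lemma to obtain
\[u*w(n,m)=\sum_{j,i}X_{j}\,f_{k+1}\,w(i,t)\,w(n-j-i,m-k-t).\]

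Since $M_{0}=M=\sum_{l=1}^{\xi}A_{l}a_{l}$ and $A_{1}=e$ satisfies $e^{2}=e$, I would next expand every $w(i,t)$ along the basis $\{a_{l}\}$ of $A(\alpha)$, writing $w(i,t)=C_{i}\,a_{1}^{t}+w'(i,t)$, where $C_{i}\in H'$ is the matrix coefficient of $a_{1}^{t}$ (from the all-$1$ multi-index) and $w'(i,t)$ is an $F$-linear combination of products $a_{l_{1}}\cdots a_{l_{t}}$ with $(l_{1},\ldots,l_{t})\neq(1,\ldots,1)$; in particular $C_{0}=A_{1}^{t}=e$. Defining
\[\bar{v}=\sum_{j,i}X_{j}\,f_{k+1}\,C_{i}\,a_{1}^{t}\,w(n-j-i,m-k-t),\qquad \bar{u}=\sum_{j,i}X_{j}\,f_{k+1}\,w'(i,t)\,w(n-j-i,m-k-t),\]
one checks directly that every entry of $\bar{v}$ lies in $A(k)\,a_{1}^{t}\,A$; and because the hypothesis $m>k+t$ ensures that the trailing factor $w(n-j-i,m-k-t)$ is a nontrivial product, it supplies the right $A$-multiplier required by the definition of $Q_{t}$, so the entries of $\bar{u}$ lie in $A(k)\,Q_{t}=U$.

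Applying $G$ entrywise to $\bar{v}$ erases the middle $a_{1}^{t}$, giving
\[G(\bar{v})=\sum_{j,i}X_{j}\,f_{k+1}\,C_{i}\,w(n-j-i,m-k-t).\]
The $i=0$ summand collapses via $f_{k+1}C_{0}=f_{k+1}e=f_{k+1}$ to $\sum_{j}X_{j}\,f_{k+1}\,w(n-j,m-k-t)$, which by Lemma~\ref{useful} applied to the tuple $u$ at length $m-t$ is exactly $u*w(n,m-t)$. For each $i\geq 1$, Lemma~\ref{useful} read in reverse identifies the corresponding summand with $(f_{1},\ldots,f_{k},g_{i},I,I,\ldots)*w(n-i,m-t)$, where $g_{i}:=f_{k+1}C_{i}\in H'$.

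The principal obstacle is that $g_{i}$ is in general neither $e$ nor any $P_{j}$, so these tuples need not yet belong to $D(M)$. Here I would apply Lemma~\ref{previous} iteratively: one invocation expresses $(f_{1},\ldots,f_{k},g_{i},I,\ldots)*w(n-i,m-t)$ as a linear combination of tuples of distance exactly $k+1$ already in $D(M)$, together with tuples of the shape $(g_{1},\ldots,g_{k+2},I,\ldots)$ whose first $k+1$ entries lie in $E$ and whose $(k+2)$-nd entry lies in $H'$, to which Lemma~\ref{previous} applies anew. Each round increases the distance by one, while the hypothesis $m>t+s$ together with Lemma~\ref{zerowy} forces every tuple whose first $s$ coordinates lie in $E=W\cup\{I-e\}$ to vanish, so the iteration terminates after at most $s-k$ rounds. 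This exhibits the $i\geq 1$ correction as a linear combination of terms $u'*w(n-i,m-t)$ with $u'\in D(M)$ of distance at least that of $u$, completing the verification.
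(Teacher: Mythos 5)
Your proposal is correct and follows essentially the same route as the paper's own proof: expand $u*w(n,m)$ via Lemma~\ref{useful}, split $w(i,t)$ into its $a_{1}^{t}$ component (with matrix coefficient $q_{i}$, which you call $C_{i}$) and the complementary part landing in $T(U)$, apply $G$ to identify the $i=0$ contribution with $u*w(n,m-t)$, and resolve the $i\geq 1$ remainder tuples $(f_{1},\ldots,f_{k},f_{k+1}C_{i},I,\ldots)$ by iterating Lemma~\ref{previous} and killing the long-prefix tuples via Lemma~\ref{zerowy}. The notational differences (your $X_{j}$, $C_{i}$ versus the paper's $u'*w(j,k)$, $q_{i}$) do not change the argument.
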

\begin{proof} Observe that  $w(n,m)=\sum_{i=0}^{n}F'_{i}$ by Lemma \ref{useful}, where  \[F'_{i}=\sum_{j=0}^{n-i}w(j,k)w(i,t)w(n-i-j,m-k-t).\]
 By the definition of operation $*$ we get 
 $u*w(n,m)=\sum _{i=0}F_{i}$ where 
\[F_{i}=\sum_{j=0}^{n-i}[u'*w(j,k)]\cdot f_{k+1}\cdot w(i,t)\cdot w(n-i-j,m-k-t),\] 
 where $u'=(f_1,f_2, \ldots ,f_k, I, I\ldots )$. Recall also that $f_{k+1}=he$ for some $h\notin W$.

Observe that  \[w(j,t)=q_{j}{a_1}^{t}+u'_{j},\] 
 for some $u'_{j}\in T(U)$ and some matrix $q_{j}$ with entries in $F$. Recall that $w(0,t)={M_{0}}^{t}=M^{t}=(\sum_{i=1}^{\xi }A_{i}a_i)^{t}$ and that $A_{1}=e$. 
Therefore $q_{0}={A_{1}}^{t}=e$, so $w(0,t)=e{a_{1}}^{t} + u'_{0}$. 
It follows that 
$F_{0}=v_{0}+u_{0}$, where $u_{0}\in T(U)$ and \[v_{0}=\sum_{j=0}^{n}[u'*w(j,k)]\cdot f_{k+1}e{a_1}^{t}\cdot w(n-j,m-k-t)\in T(V).\] 
 Recall that $f_{k+1}e=f_{k+1}$, by assumption on $P_{i}$'s.   By Lemma \ref{known3}, \[{\mathcal G}(v_{0})= \sum_{j=0}^{n}[u'*w(j,k)]\cdot f_{k+1}e\cdot w(n-j,m-k-t)=u*w(n,m-t).\] 

Observe now that 
$F_{i}=v_{i}+u_{i}$, where $u_{i}\in T(U)$ and   \[v_{i}=\sum_{j=0}^{n-i}[u'*w(k,j)]\cdot f_{k+1}\cdot q_{i}{a_{1}}^{t}\cdot w( m-k-t, n-i-j) \in T(V).\]  
 By Lemma \ref{known}, 
 \[ {\mathcal G}(v_{i})=u(i)*w(,n-i, m-t)\] where $u(i)=(f_{1}, f_{2}, \ldots , f_{k}, f_{k+1}q_{i}, I, I, \ldots  )$.
 By applying Lemma \ref{previous}  several times we get that  $u(i)*w(n-i, m-t)$ is a linear combination of elements of the form $u'*w(n-i, m-t)$ for $i>0$ and where $u'$ is in $ D(M)$ and has   distance at least $k+1$, or $u'=(g_{1}, \ldots g_{l}, I, I, \ldots )$ for some $l>s$ and all $g_{1}, \ldots , g_{l}\in E$. In the latter case $u'*w(n-i, m-t)=0$ by Lemma \ref{zerowy}, hence the latter case can be omitted. 
 Observe now that $\bar {v}=\sum_{i=0}^{n}v_{i}$, so ${\mathcal G}(\bar {v})={\mathcal G}(v_{0})+\sum _{i}{\mathcal G}(v_{i})=u*w(n, m-t)+\sum _{i=1}^{n}u(i)*w(n-i,m-t)$ and the result follows (since $u$ has distance $k+1$). 
\end{proof}
\begin{lemma}\label{Gdluzszy}
 Let $m,k,t, n$ be natural numbers with $m>k+t$, $m>t+s$ and $t\geq s$, $t\geq 1$ (where $W^{s}=0$). Let $u=(f_{1}, f_{2}, \ldots )\in D(M)$ have  distance larger than  $k+1$, so 
 $f_1, \ldots, f_{k+1}\in E$. 
 Then 
$u*w(n,m)=\bar {v}+\bar u$  for some $\bar {v}\in T(V)$, $\bar {u}\in T(U)$.  

Let  ${\mathcal G}:A(m)\cap V\rightarrow A(m-t)$  be as defined and extended to matrices as in Lemma \ref{G}. 
Then ${\mathcal G}({\bar v})$ is a linear combination of elements of the form $u'*w(n', m-t)$ for $n'\geq 0$ and where $u'\in D(M)$ has distance larger than $k+1$. 
\end{lemma}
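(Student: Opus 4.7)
The plan is to follow the structure of the proof of Lemma~\ref{G}, but with a more delicate treatment of the middle block since here $f_{k+1}\in E$ rather than $f_{k+1}\in\{e,P_1,\ldots,P_{\beta'}\}$; the condition $t\geq s$ will do the bookkeeping that the simple identity $f_{k+1}e=f_{k+1}$ did before.

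First I would split $u*w(n,m)$ twice by Lemma~\ref{important5}, once at position $k$ and once at position $k+t$, to obtain
\[u*w(n,m)=\sum_{i,j}[u_1*w(j,k)]\cdot[v_1*w(i,t)]\cdot[v_2*w(n-i-j,m-k-t)],\]
where $u_1=(f_1,\ldots,f_k,I,\ldots)$, $v_1=(f_{k+1},\ldots,f_{k+t},I,\ldots)$, and $v_2=(f_{k+t+1},f_{k+t+2},\ldots)$. Writing each $M_l=\sum_j B_{l,j}a_j$ with scalar-matrix coefficients $B_{l,j}\in H$ (so $B_{0,1}=A_1=e$ since $M_0=M$), the middle factor splits as $v_1*w(i,t)=\tilde q_i\,a_1^{t}+\tilde u_i$, with $\tilde u_i$ having entries in $Q_t$ and
\[\tilde q_i=\sum_{i_1+\cdots+i_t=i}f_{k+1}B_{i_1,1}f_{k+2}B_{i_2,1}\cdots f_{k+t}B_{i_t,1}.\]
This yields $u*w(n,m)=\bar v+\bar u$ with $\bar v\in T(V)$ and $\bar u\in T(U)$. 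Applying $G$ and reassembling the two splits gives
\[G(\bar v)=\sum_i u'_i*w(n-i,m-t),\qquad u'_i=(f_1,\ldots,f_k,\tilde q_i f_{k+t+1},f_{k+t+2},\ldots,f_m,I,\ldots).\]

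Next I would analyse $\tilde q_i$ according to the distance $q+1$ of $u$, using the fact $E\cdot H\subseteq W$ (since $(I-e)h=h-eh\in W$ by Assumption~3 and $W$ is a two-sided ideal of $H$). If $q\geq k+t$, all of $f_{k+1},\ldots,f_{k+t}$ lie in $E$, so every factor $f_{k+l}B_{i_l,1}$ lands in $W$; the product therefore lies in $W^{t}\subseteq W^{s}=0$ by $t\geq s$, and these indices contribute nothing. If $k+1\leq q<k+t$, then $f_{q+1}\in\{e,P_1,\ldots,P_{\beta'}\}$ sits inside the extracted block while $f_{k+t+1}=\cdots=f_m=I$; hence $u'_i$ collapses to $(f_1,\ldots,f_k,\tilde q_i,I,I,\ldots)$ with $\tilde q_i\in W\subseteq E$, because the factor $f_{k+1}$ already supplies a $W$-entry.

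Finally, to rewrite each $u'_i*w(n-i,m-t)$ in the required form, I would iteratively decompose the $I$-entries at positions $k+1+r$ ($r\geq 1$) via $I=(I-e)+e$. Each expansion peels off a summand whose entry at that position is $e$, producing a $D(M)$-element of distance exactly $k+1+r\geq k+2$; the complementary summand carries $I-e\in E$ there and feeds the next iteration. After at most $s-k-1$ steps the residual has its first $s$ entries in $E$, so Lemma~\ref{zerowy} (whose hypothesis $m-t>s$ follows from $m>t+s$) forces it to vanish under $*w(n-i,m-t)$. Thus each $u'_i*w(n-i,m-t)$ is a finite linear combination of $u''*w(n-i,m-t)$ with $u''\in D(M)$ of distance $>k+1$. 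The main obstacle is ensuring, in the intermediate case $k+1\leq q<k+t$, that placing $\tilde q_i$ at position $k+1$ of $u'_i$ forces every subsequently produced $e$ to sit at position $\geq k+2$; this is exactly where the choice of split point $k+t$ together with $t\geq s$ becomes indispensable, because it guarantees both the vanishing in Case~A and the $W$-membership of $\tilde q_i$ that anchors the iterative decomposition in Case~B.
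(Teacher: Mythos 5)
Your proof is correct and follows essentially the same route as the paper's: split $u*w(n,m)$ around the middle $t$ blocks (you via a double application of Lemma~\ref{important5}, the paper via Lemma~\ref{useful}), extract the $a_1^{t}$-coefficient $\tilde q_i$ of the middle block and observe it lies in $W$ because the leading factor $f_{k+1}$ lies in $E$, apply $G$, and then iteratively expand the remaining $I$-entries into $E$-parts and $e$-parts---you directly via $I=(I-e)+e$, the paper via repeated use of Lemma~\ref{previous}---with the long residuals annihilated by Lemma~\ref{zerowy}. Two small corrections to your commentary: your Case~A ($q\geq k+t$) is in fact vacuous here, since every $u\in D(M)$ has distance $q+1\leq s\leq t\leq k+t$, so $q<k+t$ always holds; and the hypothesis $t\geq s$ is not what places $\tilde q_i$ in $W$ (that already follows from $f_{k+1}\in E$ together with Assumption~3), but rather what guarantees $k+t+1>s$, so that $f_{k+t+1}=f_{k+t+2}=\cdots=I$ and the tail factor of the split is the bare $w(\cdot,m-k-t)$, exactly as in the paper's expression.
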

\begin{proof} Observe that  $w(n,m)=\sum_{i=0}^{n}F'_{i}$ by Lemma \ref{useful}, where  \[F'_{i}=\sum_{j=0}^{n-i}w(j,k)w(i,t)w(n-i-j,m-k-t).\]
 By the definition of operation $*$ we get 
 $u*w(n,m)=\sum _{i=0}F_{i}$ where 
\[F_{i}=\sum_{j=0}^{n-i}[u'*w(j,k)]\cdot u''*\cdot w(i,t)\cdot w(n-i-j, m-k-t),\] 
 where $u'=(f_1,f_2, \ldots ,f_k, I, I\ldots )$ and
 $u''=(f_{k+1}, f_{k+2}, \ldots , f_{s}, I, I, \ldots )$  (as elements of $D(M)$ have distance at most $s$).

Observe that  \[u''*w(j,t)=q_{j}{a_1}^{t}+u'_{j}\] 
 for some $u'_{j}\in T(U)$ and some matrix $q_{j}$ with entries in $F$.
 Notice also that $q_{j}=f_{k+1}h$ for some matrix $h$ since $f_{k+1}$ is the first entry of $u''.$ It follows that $q_{j}\in W$ since $f_{k+1}\in W\cup \{I-e\}.$

  Observe now that 
 $F_{i}=v_{i}+u_{i}$, where  \[v_{i}=\sum_{j=0}^{n-i}[u'*w(j,k)]\cdot  q_{i}{a_{1}}^{t}\cdot w( n-i-j, m-k-t),\in T(V)\] and $u_{i}\in T(U)$. 
 By Lemma \ref{known3}, 
 \[{\mathcal  G}(v_{i})=u(i)*w(n-i, m-t)\] where $u(i)=(f_{1}, f_{2}, \ldots , f_{k}, q_{i}, I, I, \ldots  )$.
 Recall that $q_{i}\in W$.
 Because $f_1,\ldots , f_{k+1}\in W$, then  by Lemma \ref{previous} applied several times we get that  $u(i)*w(n-i,m-t)$ is a linear combination of elements of the form $u'*w(n-i,m-t)$ for $i>0$ and where either $u'$ is in $ D(M)$  and has  the distance larger than  $k+1$, or $u'=(g_{1}, \ldots g_{l}, I, I, \ldots )$ for some $l>s$ and all $g_{1}, \ldots , g_{l}\in E$. In the latter case $u'*w(m,n)=0$ by Lemma \ref{zerowy}, hence this case can be omitted. 
 Observe now that $\bar {v}=\sum_{i=0}^{n}v_{i}$, so ${\mathcal G}(\bar {v})=\sum _{i=0}^{n}{\mathcal G}(v_{i})=\sum _{i=0}^{n}u(i)*w(n-i,m-t)$; the result follows (since each $u(i)$ has distance larger than $k+1$). 
\end{proof}

Recall that $s$ is such that $W^{s}=0$.
\begin{theorem} \label{naj} Let $M$ be a matrix satisfying Assumption $3$. 
Let $t>s$, and $k,l>0, m>k+t, m>t+s,  n\geq 0$ be natural numbers. Let $u\in D(M)$. If element $[u*w(n,m)]_{j,l}\in Z_{m}(D)$ then $[u*w(n,m-t)]_{j,l}\in B_{m-t}(M)$.
\end{theorem}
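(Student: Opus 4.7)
The plan is to apply the linear map $G$ to a carefully chosen ``$V$-part'' of the element $r \in R$ guaranteed by $[u*w(n,m)]_{k,l} \in Z_m(M)$, exploiting $R \cap V = 0$ to single out the $u$-term in the $(m-t)$-ordering. First I would choose the parameter of $V$ and $U$ so that the decomposition lemmas apply to $u$: set $V = A(q) a_1^t A$ and $U = A(q) Q_t$, where $q+1$ is the distance of $u$. Then Lemma~\ref{G} applies to $u$ itself, while Lemma~\ref{Gdluzszy} applies to every $v \in D(M)$ of strictly larger distance. By the ordering rules (higher distance is smaller via rule~1, smaller $n$ is smaller via rule~2, etc.), every quintuple $\le [u*w(n,m)]_{k,l}$ comes from some $v$ of distance $\ge q+1$, so one of the two lemmas supplies a natural splitting $v*w(n',m) = \bar v_{v,n'} + \bar u_{v,n'}$ with $\bar v_{v,n'} \in V$ and $\bar u_{v,n'} \in U$.

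Writing $r = \alpha [u*w(n,m)]_{k,l} + \sum \beta_{(v,n',k',l')} [v*w(n',m)]_{k',l'}$ with $\alpha \ne 0$ and the sum over strictly smaller quintuples, I form the scalar elements $r^V := \alpha (\bar v_{u,n})_{k,l} + \sum \beta (\bar v_{v,n'})_{k',l'} \in V$ and $r^U \in U$, so $r = r^V + r^U$. To force $r^V = 0$ I use that $V \cap U = 0$: since $A'$ is free, the natural multiplication $A(q) \otimes A(t\alpha) \otimes A \to A(q) \cdot A(t\alpha) \cdot A$ is injective (monomials in a free algebra admit unique factorizations of prescribed lengths), and the basis $\{a_{i_1} \cdots a_{i_t}\}$ of $A(t\alpha)$ separates $a_1^t$ from the generators of $Q_t$; hence $V \cap U = 0$. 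Combined with $V \subseteq \langle x \rangle$ (since $a_1 \in \langle x \rangle$) and the decomposition $A = R \oplus (A \cap \langle x \rangle)$, the hypothesis $r \in R$ forces $r^V \in V \cap U = 0$.

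I then apply $G$ entrywise to the identity $r^V = 0$. By Lemma~\ref{G}, $G(\bar v_{u,n})_{k,l} = [u*w(n,m-t)]_{k,l} + s_{k,l}$, where the entries of $s$ are linear combinations of $[u'*w(n-i, m-t)]_{k',l'}$ with $u' \in D(M)$ of distance $\ge q+1$ and $i > 0$; each such entry is strictly below $[u*w(n,m-t)]_{k,l}$ in the $(m-t)$-ordering by rules 1 and 2. Similarly, Lemmas~\ref{G} and \ref{Gdluzszy} show that for each strictly-smaller term in the sum, $G(\bar v_{v,n'})_{k',l'}$ is a linear combination of entries strictly below $[u*w(n,m-t)]_{k,l}$; here one uses that the ordering rules depend only on distance, $n$, and $(k, l)$, so strict inequalities in the $m$-ordering transfer to the $(m-t)$-ordering. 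Solving the resulting identity $0 = G(r^V)$ for $[u*w(n,m-t)]_{k,l}$, whose coefficient is $\alpha \ne 0$, expresses it as a linear combination of entries strictly below it in the $(m-t)$-ordering, which is exactly $[u*w(n,m-t)]_{k,l} \in B_{m-t}(M)$. The main obstacle is the clean verification that $V \cap U = 0$ and the attendant fact that the formally built $r^V$ really is \emph{the} unique $V$-component of $r$, so that its vanishing can be read off from $r \in R$; once this is in hand, the rest is careful bookkeeping with the orderings.
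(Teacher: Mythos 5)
Your proof follows the same route as the paper's: decompose each $v*w(n',m)_{k',l'}$ appearing in the witness $r$ into its $V$- and $U$-parts via Lemmas~\ref{G} and \ref{Gdluzszy} (with $V = A(q)a_1^t A$, $U = A(q)Q_t$, where $q+1$ is the distance of $u$), show that the resulting $V$-part $r^V$ of $r$ vanishes, apply $G$ to that identity, and read off $u*w(n,m-t)_{k,l}$ as a combination of entries that are strictly smaller in the $(m-t)$-ordering. That is exactly the paper's argument, and your ordering bookkeeping in the second half (rule~1 for larger distance, rule~2 for smaller $n$, and transfer of strict inequalities from the $m$-ordering to the $(m-t)$-ordering) is correct.

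The one place that needs tightening is the justification that $r^V = 0$. The ingredients you cite --- $V \subseteq\, <x>$, $r \in R$, and $A = R \oplus (A \cap <x>)$ --- give only $R \cap V = 0$, which does not by itself force the $V$-component of $r$ in the direct sum $V \oplus U$ to vanish: a vector outside a subspace can still project nontrivially onto it along a transverse complement. What is actually needed, and what the paper asserts, is the stronger fact that $r \in U$. This follows from the block structure you already allude to: every monomial of $r \in R \cap A(m\alpha)$ factors uniquely into an $A(q)$-block, a middle $A(t\alpha)$-block, and a tail, and since the monomial involves only $a$ and $b$, its middle block lies in $R \cap A(t\alpha)$, which is spanned by products $a_{i_1}\cdots a_{i_t}$ with all $a_{i_j} \in R$; since $a_1 \in\, <x>$ it never occurs, so the middle block lies in $Q_t$ and $r \in A(q)Q_t = U$. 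Then $r^V = r - r^U \in U$, so $r^V \in V \cap U = 0$, which is the conclusion you wanted. With that adjustment in place, your argument coincides with the paper's.
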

\begin{proof}
 Since $u\in D(M)$ then $u$ has distance $k+1$ for some $k\geq 0$. By the definition of set $Z_{m}(M)$ there is  $r\in R$ such that \[r=\sum_{(v,n',m, j',l')\leq (u,n,m,j,l)}\alpha _{(v,n',m,j',l')}v*w(n',m)_{j',l'}\] where $\alpha _{(v,n',m,j',l')}\in F$ and 
$\alpha _{(u,n,m,j,l)}\neq 0$. Observe that for every $v\in D(M)$ and every $n'$ we can write  
\[v*w(n',m)=q(v,n',m)+z(v,n',m)\] where $q(v,n',m)\in T(V)$ and 
$z(v,n',m)\in T(U)$, where  $T(U), T(V)$ are as in Lemmas \ref{G} and \ref{Gdluzszy}. By $q(v,n',m)_{j,l}$  we will denote the $k,l$-entry of matrix  $q(v,n',m)$ (similarly for $z(v,n',m)$). It follows that 
\[r=\sum_{(v,n',m,j',l')\leq (u,n,m,j,l)}\alpha _{(v,n',m,j',l')}(q(v,n',m)_{j',l'}+z(v,n',m)_{j',l'}).\]
 By Remark \ref{new} $r\in U$, hence $\sum_{(v,n',m,j',l')\leq (u,n,m,j,l)}\alpha _{(v,n',m,j',l')}q(v,n',m)_{j',l'}\in U$.
 Since $U\cap V=0$ it follows that 
\[\sum_{(v,n',m,j',l')\leq (u,n,m,j,l)}\alpha _{(v,n',m,j',l')}q(v,n',m)_{j',l'}=0.\]
 We can apply mapping ${\mathcal G}$ to this equation. We then get 
\[{\mathcal G}(q(v,n,m)_{j,l})= \sum_{(v,n',m,j',l')<(u,n,m,j,l)}\beta _{(v,n',m,j',l')}{\mathcal G}(q(v,n',m)_{j',l'}),\]
 for some $\beta _{(v,n',m,j',l')}\in F.$
Let $W$ be the linear space spanned by all elements $v*w(n',m-t)_{j',l'}$ with $[v*w(n',m-t)]_{j',l'}<[u*w(n,m-t)]_{j,l}.$ 
By Lemma \ref{G}, ${\mathcal G}(q(u,n,m)_{j,l})-u*w(n,m-t)_{j,l}\in W$. By Lemmas \ref{Gdluzszy} and \ref{G},
 ${\mathcal G}(q(v,n',m)_{j',l'})\in W$, provided that
  $[v*w(n',m)]_{k',l'}<[u*w(n,m)]_{j',l'}$ (if  $v$ has the same distance as $u$ then we use Lemma \ref{G}; if $v$ has distance larger than $u$ we use Lemma \ref{Gdluzszy}).
 Therefore, $u*w(n,m-t)_{j,l}\in W$. This means that $[u*w(n,m-t)]_{j,l}\in B_{m-t}(M)$.
\end{proof}


\section{Main result}
Let $N$ be a matrix. Recall that by $S(N)$ we denote  the linear space spanned by all entries of matrix $N$; similarly if $N_{1}, N_{2}, \ldots , N_{k}$ are matrices with entries in $A$ then by
$S(N_{1}, N_{2}, \ldots , N_{k})$ we will denote the linear space spanned by all entries of matrices $N_{1}, N_{2}, \ldots , N_{k}$.

\begin{theorem}\label{with}
  Let $M$ be a matrix satisfying Assumption $3$.  For arbitrary $c$, there is $m>c$ such that 
   $R\cap S(w(0,m), w(1,m), w(2,m), w(3,m), \ldots )$ is a linear space over $F$ of dimension less than $\sqrt n$.
\end{theorem}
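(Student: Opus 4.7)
The plan is to translate the dimension bound into a combinatorial count on $|Z_m(M)|$ and then exploit Theorem~\ref{naj} plus the disjointness of $Z_m(M)$ and $B_m(M)$ to force $|Z_m(M)|$ to be small on average.

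First I would use Lemma~\ref{wl} to identify $S(w(0,m), w(1,m), \ldots) = S(L(M^m))$. By Lemma~\ref{inny}, every $w(n,m)$ is a linear combination of matrices $u*w(n,m)$ with $u \in D(M)$, so each entry of each $w(n,m)$ is a linear combination of entries $u*w(n,m)_{k,l}$. Hence $R \cap S(L(M^m))$ is contained in the space $R_m(M)$ of Lemma~\ref{cardinality}, whose dimension equals $|Z_m(M)|$. So it suffices to prove $|Z_m(M)| < \sqrt{m}$ for infinitely many $m$.

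The key step is a uniform bound on the number of values of $m$ for which a given quadruple $(u_0, n_0, k_0, l_0)$ can place $(u_0, n_0, m, k_0, l_0)$ in $Z_m(M)$. Suppose $m_1 < m_2$ both satisfy this, with $m_1 > \max(s, k_0)$ and $m_2 - m_1 > s$. Setting $t = m_2 - m_1$, the hypotheses of Theorem~\ref{naj} at level $m_2$ hold ($t > s$, $m_2 > k_0 + t$, $m_2 > t + s$), so $(u_0, n_0, m_1, k_0, l_0) \in B_{m_1}(M)$, contradicting Lemma~\ref{zand}. Therefore, for each fixed $(u_0, n_0, k_0, l_0)$, the set $\{m > \max(s,k_0) : (u_0, n_0, m, k_0, l_0) \in Z_m(M)\}$ has diameter at most $s$, hence contains at most $s+1$ elements.

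Finally I would carry out the counting. For $m \leq M$, a quintuple $(u_0, n_0, m, k_0, l_0)$ with $w(n_0, m) \neq 0$ requires $n_0 \leq mT$, where $T$ is the $y$-degree of $\gamma_y(M)$; otherwise the entry is zero and cannot belong to $Z_m(M)$. Since $D(M)$ is finite by Lemma~\ref{finite} and $k_0, l_0 \leq d$, the number of relevant quadruples is at most $|D(M)| \cdot (MT+1) \cdot d^2 = O(M)$. Each contributes to at most $s+1$ of the sets $Z_m(M)$, plus a bounded contribution from the exceptional region $m \leq \max(s,d)$. Summing gives $\sum_{m=1}^M |Z_m(M)| \leq C \cdot M$ for some constant $C$ depending only on $M, s, d, T, |D(M)|$. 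A pigeonhole argument then yields infinitely many $m$ with $|Z_m(M)| \leq 2C$, and for such $m$ with $m > 4C^2$ we obtain $|Z_m(M)| \leq 2C < \sqrt{m}$, which is the required bound. The main obstacle is checking the applicability of Theorem~\ref{naj} uniformly across the pair $(m_1, m_2)$ and handling the small-$m$ boundary cases; both are mild once one observes that the Theorem~\ref{naj} hypotheses translate into lower bounds on $m_1$ alone.
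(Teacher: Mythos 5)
Your proof is correct and rests on exactly the same machinery the paper uses: Lemma~\ref{wl}, Lemma~\ref{inny} and Lemma~\ref{cardinality} to reduce the dimension bound to bounding $|Z_m(M)|$, and then Theorem~\ref{naj} together with the disjointness $Z_m(M)\cap B_m(M)=\emptyset$ from Lemma~\ref{zand} as the combinatorial engine. The only difference is bookkeeping: the paper argues by contradiction, fixing one large $m$ and packing $m$ pairwise disjoint sets $F_q$ (shadows of $Z_{m+q(s+1)}(M)$ for $q=1,\dots,m$) into $C_m(M)$, whose cardinality is $O(m)$, to contradict the assumed lower bound $|Z_{m'}(M)|>\sqrt{m'}$; you instead phrase the very same disjointness as a multiplicity bound (each fixed $(u,n,k,l)$ lies in $Z_m(M)$ for at most $s+1$ values of $m$ once $m$ clears the small exceptional range), sum $|Z_m(M)|$ over $m\le M$ to get $O(M)$, and apply pigeonhole. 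Both arguments hinge on the same $O(M)$ versus $\sqrt m$ tension; yours is a direct averaging argument rather than a contradiction and is, if anything, a little easier to verify, though you should state explicitly that for $m\le\max(s,d)$ the sets $Z_m(M)$ are finite (since $n\le Tm$, $k,l\le d$, $u\in D(M)$) so their total contribution is indeed a fixed constant.
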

\begin{proof} Recall that, by Lemma \ref{inny},  $w(0,m), w(1,m), \ldots \in \sum_{u\in D(M), i=0,1, \ldots }Fu*w(i,m)$ for $i=1,2, \ldots k$ and $u\in D(M)$
 with distance at most $s$, where $W^{s}=0$. 
 It is sufficient to show that for infinitely many $m$ the dimension of the set $R_{m}(M)=R\cap \sum _{u\in D_{m}(M), i=0,1, \ldots }S(u*w(i,m))$ is less than $\sqrt m$. By Lemma \ref{cardinality} it is equivalent to show that the cardinality of set $Z_{m}(M)$ is smaller than $\sqrt {m}$ for infinitely many $m$.

We will provide a proof by contradiction. 
 Suppose, on the contrary,   that there is $c$ such that  for every $m>c$, set $Z_{m}$ has more than $\sqrt m$ elements.

By Theorem \ref{naj}, if $[u'*w(i,m)]_{k,l}\in Z_{m}(M)$ and $[u*w(i',m')]_{k',l'}\in Z_{m'}(M)$ and $m>m'+s,$ $m'>2s$ then $(u, i, k,l)\neq (u', i', k', l')$. It follows because by Theorem \ref{naj} $[u*w(i,m')]_{k,l}\in B_{m'}(M)$ and 
$B_{m'}(M)\cap Z_{m'}(M)=0$ by Theorem \ref{zand}, so $(u, i, k,l)\neq (u', i', k', l')$.

Let $m$ be a natural number. Recall that $M$ is a $d$ by $d$ matrix with entries in $A(\alpha )\cap \langle x\rangle $. 
 Recall that $\gamma _{y}(M)=\sum_{i=0}^{t}M_{i}y^{i},$ so $w(i,m)=0$  if $i>tm$.

Let $C_{m}(M)$ be the set of all tuplets $(u, i, m, k,l)$, where $0\leq i\leq m(s+2)t $, $k,l\leq d$, $u\in D(M)$. 
  Recall that set $D(M)$ is finite. Therefore there is a constant $z$ such that for every $m$ the cardinality of the set $C_{m}(M)$ is smaller than $zm$.

We can now choose $m>z^{2}$ and $m>c+3s$. 

For $q=1,2, \ldots, m$ let $F_{q}$ be the set of elements $(u,i, m, k,l)$ such that $(u, i, m+q(s+1), k,l)\in Z_{m+q(s+1)}(M)$ where $u\in D(M)$.  

 Let $(u, i, m+q(s+1), k,l)\in Z_{m+q(s+1)}(M)$. Notice that $i\leq  t(m+q(s+1)) \leq m\cdot t\cdot (s+2) $,  as otherwise $w(i, m+q(s+1))=0$ (since $w(i,j)$ is zero if $i>t\cdot j$ by the construction of $w(i,j)$). Therefore $F_q\subseteq C_{m}(M)$, for  $q=1,2, \ldots ,m$.
 
Notice that the cardinality of $F_{q}$ is the same as the cardinality of  $Z_{m+q(s+1)}(M)$, and hence larger than $\sqrt m$.
By Theorem \ref{naj}, $F_{i}\cap F_{j}=\emptyset $ for any $1\leq i,j\leq m$.
 Therefore the cardinality of $\bigcup _{i=1}^{s}F_{i}$ is larger than $m{\sqrt m}$.

 Recall that $F_{i}\subseteq C_{m}(M)$ for $i=1,2, \ldots ,m$.  
 Therefore, the cardinality of 
 $C_{m}(M)$ has to be at least $m\sqrt m$. This gives us a contradiction, since we showed that the cardinality of $C_{m}(M)$ is  smaller than $zm$, yet we assumed that $m>z^{2}$. 
\end{proof}

We will now prove that Assumption $1$ holds for algebras over a field $F$, where $F$ is the algebraic closure of a finite field. 

\begin{theorem}\label{slm} Let $F$ be a field. Let $M$ be a matrix satisfying Assumption $3$.  For arbitrary $c$, there is $m>c$ such that 
   $R\cap S(L(M^{m}))$ is a linear space over $F$ of dimension less than $\sqrt n$.
\end{theorem}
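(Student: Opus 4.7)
The plan is to derive Theorem \ref{slm} as an immediate corollary of Theorem \ref{with}, using Lemma \ref{wl} to identify the two relevant linear spaces. The dimension count in Theorem \ref{with} is phrased in terms of the entries of the matrices $w(i,m)$, but for downstream use (namely, verifying Assumption~$1$) we need the same bound phrased in terms of $S(L(M^m))$, and that identification is exactly the content of Lemma \ref{wl}.

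First I invoke Lemma \ref{wl}: since $F$ is infinite and $M$ satisfies Assumption $3$, its entries are homogeneous of a common degree $\alpha$, so all hypotheses of that lemma are in force. It follows that the linear subspace $Q$ of $A$ spanned by the entries of all matrices $w(n,m)$ for $n = 0, 1, 2, \ldots$ equals $S(L(M^m))$. Intersecting with the subalgebra $R$ yields the identity
\[
R \cap S(L(M^m)) \;=\; R \cap S\bigl(w(0,m),\, w(1,m),\, w(2,m),\, \ldots\bigr).
\]

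Now, given arbitrary $c$, I apply Theorem \ref{with} (whose hypothesis is exactly that $M$ satisfies Assumption $3$, matching the hypothesis here) to produce some $m > c$ for which the right-hand side is a linear space over $F$ of dimension less than the claimed square-root bound. By the displayed equality, the same $m$ witnesses the required bound for $R \cap S(L(M^m))$, which is the conclusion of Theorem \ref{slm}.

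There is essentially no obstacle in this reduction step; the genuine difficulty was concentrated in Theorem \ref{with}, whose proof rested on the pivot-reduction Theorem \ref{naj} (mapping $[u * w(n,m)]_{k,l} \in Z_m(M)$ to $[u * w(n, m-t)]_{k,l} \in B_{m-t}(M)$), the disjointness $Z_{m'}(M) \cap B_{m'}(M) = \emptyset$ from Lemma \ref{zand}, and the pigeonhole count against the finite set $D(M)$ bounding $|C_m(M)| < zm$. Lemma \ref{wl} is precisely the bridge that translates that combinatorial bound into the invariant-looking statement about $S(L(M^m))$, which is the form the remainder of the paper (in particular Assumption $1$) needs.
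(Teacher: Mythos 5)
Your proposal matches the paper's own proof: it deduces the statement by combining Theorem \ref{with} (which gives the dimension bound for $R\cap S(w(0,m),w(1,m),\ldots)$ for some $m>c$) with Lemma \ref{wl} (which identifies that span with $S(L(M^{m}))$). This is exactly the paper's two-line argument, so the reduction is correct and nothing further is needed.
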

\begin{proof} We can now apply Theorem \ref{with}
 to the matrix $M$ to get that  \[R\cap S(w(0,m), w(1,m), w(2,m), \ldots) \] is a linear space over $F$ of dimension less than $\sqrt n$. By Lemma \ref{wl},  we have \[S(L(M^{m}))=S(w(0,m), w(1,m), w(2,m), \ldots).\]
\end{proof}  

\section{Matrices}
 In this section $F$ denotes the algebraic closure of a finite field.
 The aim of the next two sections is to show that, if $N$ is an arbitrary matrix with entries in $A(j)\cap \langle x\rangle $ for some $j$, then some power of $N$ satisfies  Assumption $3$.
Here the notation $A$ and $R$ is not related to the similar notation appearing in previous chapters; instead, $R$ denotes a general ring. 
\begin{delfin}\label{17}
  Let $R$  be a finite dimensional  $F$-algebra generated by elements  $r_1, r_2, \ldots , r_{n}\in R$. Let $M$ be the multiplicative monoid generated by elements $r_{1}, \ldots , r_{n}$. Let $Z^{+}$ be the set of all positive integers. 
 Let $\alpha :M\rightarrow Z^{+}$ be the function such that 
\begin{itemize}
\item $\alpha (r_{i})=1$ for $i=1, \ldots , n$.
\item If $u,v\in M$ then $\alpha (u\cdot v)=\alpha (u)+\alpha (v).$
\end{itemize} 
  The number $\alpha (r)$ will be caled the weight of element $r\in M$.
 Notice that one element may have many weights. 

 We say that $\alpha $ is the weight function on $R$ related to elements $r_{1}, \ldots , r_{n}$. 
 \end{delfin}
 \begin{delfin} Let notation be as in Definition \ref{17}.
 We will say that an element $r\in R$ is pseudo-homogeneous if it can be expressed as a linear combination of elements with the same weight, say $\beta $. The weight of $r$ is $\beta $. The weight of $r$ will be denoted $\deg_{w} (r)$.

 By the linear space of pseudo-homogeneous elements of weight $n$ we will mean the linear space over $F$ spanned by all pseudo-homogeneous elements of weight $n$; this linear space will be denoted $R(n)$.
\end{delfin}
 By $R^{1}$, we will denote the algebra which is the usual extension of $R$ by an identity element, and by $R(0)$ we will denote the space $F\cdot 1$ in $R^{1}$.

The following Lemma closely resembles Lemma $1$ from \cite{ps}. However, our ring need not be graded, so we provide a detailed proof using similar methods as in \cite{ps}.

\begin{lemma}\label{mie}  Let  $R$ be a simple $F$-algebra with an identity element and let notation be as in Definition \ref{17}.
 Assume that  \[1=\sum_{i=k}^{k'}b_{i},\] where $b_{i}$ is a pseudo-homogeneous element of weight $i$ for each $i$. Let $h\in R$ be a pseudo-homogeneous element in $R$.  Denote $H_{i}= \sum_{j=0}^{i-\deg _{w}h}R(j)hR(i-j-\deg_{w} h)$  
for $n\geq \deg _{w} h$.
 Then there exist  $c_{i}\in  H(i)$  such that 
\[1=\sum_{i=t}^{t'}c_{i}\] for some $t> k'$ and $t'-t< k'$.
\end{lemma}
\begin{proof}  Let $I$  be the ideal generated by $h$ in $R$ then $I=\sum_{i=\deg _{w} h}^{\infty } H_{i}$. Notice that $R=I$ since $R$ is a simple algebra, therefore there are $c_{i}\in H(i)$ such that  $1=\sum_{i=t}^{t'}c_{i}$;  $t'-t$ will be called the length of expression $1=\sum_{i=t}^{t'}c_{i}.$  
 We can assume that $t>k$ and that $t'-t$ is minimal possible. If $t'-t< k'$ then the result follws; suppose that 
$t'-t\geq k'$. Recall that  $\sum_{i=k}^{k'}b_{i}=1$ therefore 
 \[1=c_{t}(\sum_{i=k}^{k'}b_{i})+\sum _{i=t+1}^{t'}c_{i}.\] Notice that the weight of element $c_{t}b_{k'}$ is $t+k'\leq t'$.
  Therefore, $1=\sum_{i=t+1}^{t'}e_{i}$ where $e_{i}=c_{t}b_{i-t}+c_{i}$ for $i\geq t+1$. We have obtained a contradiction since 
 the expression $1=\sum_{i=t+1}^{t'}e_{i}$ has smaller length than the expression $1=\sum_{i=t}^{t'}c_{i}$.
\end{proof}
 The following lemma resembles Proposition $1$ from \cite{ps}. However, our ring is ungraded, so we need to repeat the argument. 
\begin{lemma} Let $R$ be a simple $F$-algebra with an identity element and let notation be as in Definition \ref{17}.  
  Let \[1=\sum _{i=k}^{k'}b_{i}\] with each $b_{i}$ pseudo-homogeneous of weight $i$ for some $k,k'$, with $k'-k$ the minimal possible.
 Then all $b_{i}$ are in the center of $R$.
\end{lemma}
\begin{proof} The proof is similar to the proof of Proposition $1$ in \cite{ps}.
 We will show first that all $b_{i}$ belong to the center of $R$. Suppose  the contrary, and let $z$ be minimal such that  $c'=rb_{z}-b_{z}r\neq 0$ for some pseudo-homogeneous $r$ (we can assume that $r$ is pseudo-homogeneous, since every element in $R$ is a linear combination of pseudo-homogeneous elements).  Since $R$ is simple, then $R$ equals the ideal generated by $c'$. Notice that $c'$ is a pseudo-homogeneous element. Let $c$ be a pseudo-homogeneous 
  element which is a product of the generators $r_{1}, \ldots , r_{n}$ of $R$ and the  element $c'$, with the element $c'$ appearing at least $2k'$ times. Such a non-zero element $c$ exists, since a simple ring is prime. 
 
 By the previous lemma, \[1=\sum_{j=t}^{t'}c_{i}\]  where $c_{i}$ are pseudo-homogeneous and $c_{i}\in  \sum_{j=0}^{i-\deg_{w} c}R(j)cR(i-j-\deg_{w} c)\subseteq R_{i}$. 
 Moreover, 
 $t'-t< k'$. Recall that $c'=-\sum_{i=z+1}^{k'}d_{i}$, with $d_{i}=rb_{i}-b_{i}r$.
 
Notice that $c_{t}$ is pseudo-homogeneous of weight $t$. Observe that $c'$ is pseudo-homogeneous of weight $z+\deg_{w} r$ and each $d_{i}$ is pseudo-homogeneous of weight $i+\deg_{w} r$. Recall that each $c_{i}$ is a linear combination of  products of elements $a_{i}$ and element $c'$; we can substitute at some place in each of these products  
 $c'=-\sum_{i=z+1}^{k'}d_{i}$.  
 Therefore, \[c_{t}=\sum_{i=t+1}^{ k'-z+t}f_{j}\]  where each  $f_{j}$ is a linear combination of  products of some generators $r_{1}, \ldots , r_{n}$ of $R$ and  element $c'$, with element $c'$ appearing at least $2k'-1$ times and each product is  pseudo-homogeneous of weight $j$ (so each $f_{j}$ is pseudo-homogeneous of weight $j$). Now if $k'-z+t\leq t'$ we can substitute in this way for element $c_{t}$ in $1=\sum_{j=t}^{t'}c_{i}$ and obtain 
 \[1=\sum_{j=t+1}^{t'}c'_{i}\] where each $c_{i}'$ is pseudo-homogeneous of weight $i$ and 
 is a linear combination of  products of the generators $r_{1}, \ldots , r_{n}$ of $R$ and the element $c'$, with the element $c'$ appearing at least $2l-1$ times.

 Continuing in this way we can  substiture $c'=-\sum_{i=z+1}^{k'}d_{i}$  several times to obtain  (because $t'-t\leq l$)
 that  \[1=\sum_{j=t'-(k'-z)+1}^{t'}c''_{i}\]
  where $c''_{i}$ are pseudo-homogeneous of weight $i$. Observe that $t'-(t'-(k'-z)+1)=k'-z-1< k'-k$. This is a shorter presentation than \[1=\sum _{i=k}^{k'}b_{i}.\] Hence we obtain a contradiction. 
\end{proof}
  
\begin{lemma}
 Let $R$ be a finite dimensional simple $F$-algebra and let notation be as in Definition \ref{17}. Then $1$ is a pseudo-homogeneous element of $R$.
\end{lemma}
\begin{proof}  Since $a_{1}, \ldots ,a_{n}$  generate $R$, then there are pseudo-homogeneous elements $b_{i}$ such that 
\[1=\sum _{i=k}^{k'}b_{i}.\] We can assume that  $k'-k$ is the minimal possible. 
 By the previous lemma, each $b_{i}$ is central. By the Wedderburn-Artin theorem, $R$ is isomorphic to a matrix ring with coefficients from $F$. Hence, every central element is of the form $\alpha \cdot  I$, where $I$ is the identity matrix and $\alpha $ is from $F$. 
 Then $p_{i}=\alpha _{i} \cdot I$, and since $1=\sum_{i=k}^{k'}b_{i}$, then some $\alpha _{i}\neq 0$. Then ${1\over \alpha _{i}}b_{i}=I$ is pseudo-homogeneous.
\end{proof}

\begin{remark}\label{remark}  Since $F$ is the algebraic closure of a finite field, then for every matrix $m$ there is a natural number $\gamma (M)>0$ such that 
 $(M^{\gamma (M)} )^{2}=M^{\gamma (M)}$ is a diagonalizable matrix. Moreover,
  if all eigenvalues of $M$ are nonzero, then there is a natural number  $\beta (M)>0 $ such that 
 $M^{\beta (M)}=I$, the identity matrix.

To prove this, we need to restrict ourselves to diagonal matrices, where this result holds, and to the Jordan blocks.
Let $\alpha I+N$ be a Jordan block with $\alpha $ on diagonal; then $N$ is a stricly uppertriangular  matrix, and hence nilpotent.
Let $p$ be a characteristic of the field $F$. Then $(\alpha I+N)^{p^{n}}=\alpha ^{p^{n}}I+N^{p^{n}}$; therefore $(\alpha I+N)^{p^{n}}=\alpha ^{n}I=I$ for sufficiently large $n$, as required. For some related results see \cite{alexiejczyk}.
\end{remark} 

 If $R_{1}, R_{2}, \ldots R_{t}$ are algebras, then elements of algebra $R=\oplus_{i=1}^{t}R_{i}$ will be written as 
$(q_{1}, q_{2}, \ldots , q_{t})$, with $q_{i}\in R_{i}$.
\begin{theorem}\label{ttt}   Let $R$ be a finite dimensional $F$-algebra and let notation be as in Definition \ref{17}.
  Suppose that  $R=R_{1}\oplus R_{2}\oplus \cdots \oplus R_{t}$  for some simple finite dimensional $F$-algebras  $R_{1}, R_{2}, \ldots ,R_{t}$. Then $1$ is a pseudo-homogeneous element of $R$. 
\end{theorem}
\begin{proof} We proceed by induction  on $t$. If $t=1$ then the result follows from the previous Lemma. 
 Assume that $t>1$ and that the result holds for numbers $1, 2, \ldots , t-1$. Recall that $r_{1}, \ldots , r_{n}$ are generators of $R$ (see Definition \ref{17}).
  Each element $r_{i}$ can be written as $r_{i}=(r'_{i}, e_{i})$ with $r_{i}'\in R'$ where $R'= R_{1}\oplus R_{2}\oplus \cdots \oplus R_{t-1}$ and 
$e_{i}\in R_{t}$.  We can apply the inductive assumption to the algebra $R'=R_{1}\oplus R_{2}\oplus \cdots \oplus R_{t-1}$ with 
 generators $a'_{i}$ for $i=1, \ldots , n$. Then the identity element of $R'$ is pseudo-homogeneous in $R'$.
It follows that  element $(1, e)$ is a pseudo-homogeneous element of $R$ for some $e\in R_{t}$. 

Similarly, by the previous Lemma applied to the ring $R_{n}$ we obtain that $(f, 1)$ is a pseudo-homogeneous element of $R$, for some $f\in R'$.

Observe that, since a power of a pseudo-homogeneous element is a pseudo-homogeneous element, then
$(1, e)^{\gamma }$ and  $(f, 1)^{\beta }$ are pseudo-homogeneous elements of 
   the same weight for some $\gamma ,  \beta >0$.  Let $\alpha _{1}, \ldots ,\alpha _{s}$ be the eigenvalues of matrix $f^{\beta }$; then for any scalar $c$ the matrix 
$cI+f^{\beta }$ has  the eigenvalues  $c+\alpha _{1},\ldots ,c+ \alpha _{s}.$ 
The field $F$ is infinite, therefore there are $c,c'\in F'$ such that all the eigenvalues of matrix $M= 
c(1, e)^{\gamma }+c'(f, 1)^{\beta }$ are nonzero. By  Remark \ref{remark}, some power of the matrix $M$ is the identity matrix. As $M$ is a pseudo-homogeneous element of $R$ it proves the result.
\end{proof}
We will now prove the following result for rings which are not necessarily semisimple.
\begin{theorem}  Let $R$ be a finite dimensional $F$-algebra which is not nilpotent and let notation be as in Definition \ref{17}.
 Then there is a pseudo-homogeneous element $e\in R$ such that $e^{2}-e\in W$, and  for every $r\in R$ we have $r-er\in W$ and $r-re\in W$,
where $W$ is the Wedderburn radical of $R$ (the largest nilpotent ideal in $R$).
\end{theorem}
\begin{proof} Consider the algebra $R'=R/W$. Then $r'_{1}=r_{1}+W, r'_{2}=r_{2}+W, \ldots ,r'_{n}= r_{n}+W$ are generators of $R'$. 
 We can consider Definition \ref{17} for the algebra $R'$ and its generators $r'_{1}, \ldots , r'_{n}$ (in place of $R$ and $r_{1}, \ldots , r_{n}$). The algebra $R'$ is semisimple, so by Theorem \ref{ttt} it has a pseudo-homogeneous identity element $e+W=1$
 for some pseudo-homogeneous $e\in R$. Observe that for every $r\in R$ we have $r+W=1\cdot (r+W)=(r+W)\cdot {1}$, hence $r-er\in W$ and $r-re\in W$ for every $r\in R$.
 In particular $e^{2}-e\in W$.  
\end{proof}

\begin{theorem}\label{f} 
 Let $R$ be a finite dimensional $F$-algebra which is  not nilpotent and let notation be as in Definition \ref{17}. 
 Then there is $f\in R$ such that $f^{2}=f$  and for every $r\in R$ we have $r-fr\in W$ and $r-rf\in W$,
where $W$ is the Wedderburn radical of $R$.

Notice then that for every $r\in R$ we have $f(fr)=fr$, because $f^{2}=f$.
\end{theorem}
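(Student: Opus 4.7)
My plan is to invoke the immediately preceding theorem to obtain a pseudo-homogeneous element $e\in R$ with $e^{2}-e\in W$ and $r-er,\,r-re\in W$ for every $r\in R$; the hypothesis that $R$ is not nilpotent ensures $R/W\neq 0$ so that such an $e$ is actually provided. I would then lift $e$ to a genuine idempotent $f\in R$ by the classical procedure of lifting idempotents modulo a nilpotent ideal. Note that the statement of Theorem~\ref{f} does not demand pseudo-homogeneity of $f$, which is convenient since the lifting step will generally destroy it.

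For the lifting, since $R$ is finite-dimensional, $W$ is nilpotent. Set $e_{0}:=e$ and recursively $e_{k+1}:=3e_{k}^{2}-2e_{k}^{3}$. I would check by induction that $u_{k}:=e_{k}^{2}-e_{k}\in W$ and that $u_{k}$ commutes with $e_{k}$ (since $e_{k}u_{k}=e_{k}^{3}-e_{k}^{2}=u_{k}e_{k}$). Working inside the commutative subalgebra generated by $e_{k}$, a direct expansion using $e_{k}^{2}=e_{k}+u_{k}$ yields $e_{k+1}=e_{k}+u_{k}(1-2e_{k})$ and
\[
u_{k+1}=e_{k+1}^{2}-e_{k+1}=u_{k}^{2}(4u_{k}-3).
\]
Hence if $u_{k}^{N}=0$ then $u_{k+1}^{\lceil N/2\rceil}=0$, so the nilpotency index of $u_{k}$ is roughly halved at each stage. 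After finitely many iterations $u_{k}=0$, and I take $f:=e_{k}$. By construction $f$ is a polynomial in $e$, so $f\in R$, and $f-e\in W$.

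Finally, for any $r\in R$,
\[
r-fr=(r-er)-(f-e)r\in W+Wr\subseteq W,
\]
and symmetrically $r-rf\in W$, completing the proof. The only mildly nontrivial step is the iterative lifting formula; this is entirely standard, and any other characteristic-free recipe for lifting idempotents mod a nilpotent ideal would work equally well. The key structural observation making the iteration go is that $u_{k}$ commutes with $e_{k}$, which lets one compute as in a commutative polynomial ring and so doubles the nilpotency gain at each step.
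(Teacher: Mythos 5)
Your proof is correct for the statement as written, but it takes a genuinely different route from the paper, and the difference has a consequence for how Theorem~\ref{f} gets used later. The paper takes $f=e^{m}$ for a suitable power $m$, using the Remark preceding Theorem~\ref{ttt} (a Frobenius-type argument special to the algebraic closure of a finite field: every element of a finite-dimensional algebra has a power that is idempotent), and then shows $r-e^{m}r\in W$ by telescoping $r-e^{m}r=\sum_{i=0}^{m-1}(e^{i}r-e^{i+1}r)$. You instead apply the characteristic-free Newton iteration $e_{k+1}=3e_{k}^{2}-2e_{k}^{3}$, verify $u_{k+1}=u_{k}^{2}(4u_{k}-3)$, and use $f-e\in W$ to transport the $r-er\in W$ condition to $f$. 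Both are valid, and yours is more elementary and works over any base field, not just the algebraic closure of a finite field. However, the paper's construction has one extra feature which you explicitly forgo: since a power of a pseudo-homogeneous element is pseudo-homogeneous, the paper's $f=e^{m}$ remains pseudo-homogeneous, whereas your iterates $e_{k}$ mix weights and the resulting $f$ generally is not. This matters because the proof of Theorem~\ref{power} invokes Theorem~\ref{f} and then asserts that the resulting idempotent ``is a pseudo-homogeneous element of degree $\beta$'', which is needed to conclude via Lemma~\ref{matri} that $e$ lies in the span of the matrices $A_{1},\ldots,A_{\xi}$. So while your proof establishes exactly what the theorem literally claims, the pseudo-homogeneity of $f$ is in fact an implicit part of the intended conclusion, and your lifting step discards it; to serve the paper's downstream argument one should state and prove the pseudo-homogeneity explicitly, which the power construction gives for free and the Newton iteration does not.
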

\begin{proof}
 Let  $e$ be as in the previous theorem. Then $e^{2}-e\in W$, and by the remark before Lemma \ref{ttt} there is $m>0$ such that $f=e^{m}$ satisfies $f^{2}=f$.
 Notice also that for every $r\in R$ we have $r-re^{m}\in W$. Indeed, the latter follows because $r-e^{m}r=(r-er)+(er-e^{2}r)+\ldots +(e^{m-1}r-e^{m}r)\in W$, since $r'-er'\in W$ where $r'=e^{i}r$,  by assumption.
 Similarly $r-e^{m} r\in W.$ 
\end{proof}  

\section{ Matrices and noncommutative algebras}

 In this section $F$ denotes the algebraic closure of a finite field.
 Let $N$ be a matrix whose coefficients are elements of $A$ of the same degree.
 Assume that for almost all $i$ entries of $N^{i}$ are  in $\langle x\rangle$.
  We will first show that  for some natural number $q$ either  $N^{q}=0$ or $M=N^{q}$ satisfies  Assumption $3$.
\begin{lemma}\label{matri}
 Let $N=\sum_{i=1}^{\xi '}A'_{i}a'_{i}$ where for each $i$, $A'_{i}$ is a matrix with coefficients in $F$ and $a'_{1}, a'_{2}, \ldots 
 a'_{\xi '}$ are elements from $A$ which are linearly independent over $F$ and have the same degree. Let $H'$ be the $F$-algebra generated by matrices $A'_{i}$. 
 Let  $\beta $ be a natural number. Then, for some $\xi $,  
  $N^{\beta }=\sum_{i=1}^{\xi  }A_{i}a_{i}$ where  each $a_{i}$ is a product of exactly $\beta $ elements from the set $\{a'_{0}, a'_{1}, \ldots , a'_{\xi }\}$. 
  Moreover, $a_{0}, a_{1}, \ldots , a_{\xi }$ are linearly independent over $F$
and $\{A_{1}, A_{2}, \ldots ,A_{\xi}\}$ equals the set of matrices which are products of exactly $\beta $ matrices from the set 
$\{A'_{1}, \ldots , A'_{\xi '}\}$
\end{lemma}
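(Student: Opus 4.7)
The approach is to expand $N^{\beta}$ by distributivity, pass to a monomial basis of the $F$-span of the resulting degree-$\beta$ products in the $a'_i$'s, and then collect coefficients; the spanning claim is the subtle part at the end.

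First I would use the fact that each $A'_i$ has scalar entries in $F$ and so commutes with every $a'_j$ in the ambient matrix ring over $A$ to obtain the full expansion
$$
  N^{\beta} \;=\; \Bigl(\sum_{i=1}^{\xi'} A'_i a'_i\Bigr)^{\beta}
          \;=\; \sum_{(i_1,\ldots,i_\beta)\in\{1,\ldots,\xi'\}^{\beta}}
                A'_{i_1}\cdots A'_{i_\beta}\,\, a'_{i_1}\cdots a'_{i_\beta}.
$$
Each coefficient $A'_{i_1}\cdots A'_{i_\beta}$ is a product of $\beta$ weight-$1$ generators of $H'$, so it automatically lies in the weight-$\beta$ part of $H'$; and each element $a'_{i_1}\cdots a'_{i_\beta}$ is a product of exactly $\beta$ elements of $\{a'_1,\ldots,a'_{\xi'}\}$, as demanded by the statement.

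Next let $V\subseteq A$ be the finite-dimensional $F$-span of all the monomials $a'_{i_1}\cdots a'_{i_\beta}$. Because these monomials span $V$ by definition, I would extract from them (by elementary linear algebra in a finite-dimensional space) a basis $\{a_1,\ldots,a_\xi\}$ of $V$ whose members are themselves products of exactly $\beta$ elements of $\{a'_1,\ldots,a'_{\xi'}\}$. Writing each monomial uniquely as $a'_{i_1}\cdots a'_{i_\beta}=\sum_j c^{(\vec{\imath})}_j a_j$ with $c^{(\vec{\imath})}_j\in F$ and substituting back into the expansion gives
$$
  N^{\beta} \;=\; \sum_{j=1}^{\xi} A_j\, a_j, \qquad
  A_j \;=\; \sum_{(\vec{\imath})} c^{(\vec{\imath})}_j\,A'_{i_1}\cdots A'_{i_\beta},
$$
with the $a_j$ linearly independent over $F$ by construction and each $A_j$ an $F$-linear combination of weight-$\beta$ products of the $A'_i$, hence lying in the weight-$\beta$ part of $H'$. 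This already delivers the form and the first half of the conclusion.

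The main obstacle is the remaining assertion that $\mathrm{span}_F\{A_1,\ldots,A_\xi\}$ is \emph{all} of the weight-$\beta$ part of $H'$, not merely a subspace. One inclusion is automatic. For the other, it is convenient to view $N^\beta$ as a tensor in $H'_\beta\otimes_F V$: the span of the $A_j$'s coincides with the image of the contraction $V^{*}\to H'_\beta$, $\alpha\mapsto\sum_{\vec{\imath}}\alpha(a'_{\vec{\imath}})\,A'_{\vec{\imath}}$, and we must argue this map is surjective onto $H'_\beta$. The natural route is a Vandermonde-style argument in the spirit of Lemma~\ref{wl}, combined with the hypothesis that the $a'_i$ are $F$-linearly independent and of the same pure degree $\alpha$: one separates contributions of different tuples $\vec\imath$ by substituting enough parameters into a suitable linear functional of $V$ and inverting a Vandermonde determinant, and thereby recovers each individual product $A'_{i_1}\cdots A'_{i_\beta}$ as an $F$-linear combination of the $A_j$. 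The delicate point, and the main work of the proof, is ensuring that this recovery is possible under the only assumption available, namely linear independence of the $a'_i$ rather than of all their degree-$\beta$ products.
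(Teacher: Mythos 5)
Your expansion of $N^{\beta}$ and the observation that the scalar matrices $A'_i$ commute with the $a'_j$ are both correct and match the start of the argument. But you then take a detour that obscures the key structure, and you never fill the gap you yourself name at the end. The point you flag as ``the delicate point, and the main work of the proof'' is not handled by a Vandermonde argument in the paper; it is dissolved by a single structural observation that your proposal is missing.

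The missing observation is that in this setting the monomials $a'_{i_1}a'_{i_2}\cdots a'_{i_\beta}$, as the tuple $(i_1,\ldots,i_\beta)$ ranges over all of $\{1,\ldots,\xi'\}^{\beta}$, are already linearly independent over $F$. This is not an extra hypothesis: the $a'_i$ live in the free-algebra subring $A$ of $A'$, they are homogeneous of the same gradation $\alpha$, and in $A$ a word of gradation $\alpha\beta$ has a \emph{unique} factorization into $\beta$ words of gradation $\alpha$ (every factor begins with $a$ or $b$). Hence the multiplication map $A(\alpha)^{\otimes\beta}\to A(\alpha\beta)$ is injective, and linear independence of the $a'_i$ forces linear independence of all their length-$\beta$ products. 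This is exactly what the paper's one-line proof records. Once you have it, there is nothing left to do: you take $a_1,\ldots,a_\xi$ to be the distinct products $a'_{i_1}\cdots a'_{i_\beta}$ (no basis extraction, no coefficient redistribution), and the corresponding $A_j$ are precisely all the weight-$\beta$ monomials $A'_{i_1}\cdots A'_{i_\beta}$, which span the weight-$\beta$ part of $H'$ by the very definition of the weight grading.

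By contrast, your plan of passing to a basis of the span $V$ of the products and then trying to recover each individual monomial $A'_{i_1}\cdots A'_{i_\beta}$ by a Vandermonde-style contraction cannot work under the weaker assumption you allow yourself. If two distinct tuples $\vec\imath\neq\vec\jmath$ gave $a'_{i_1}\cdots a'_{i_\beta}=a'_{j_1}\cdots a'_{j_\beta}$, the expansion would only ever exhibit the sum $A'_{i_1}\cdots A'_{i_\beta}+A'_{j_1}\cdots A'_{j_\beta}$ as a coefficient, and no linear functional on $V$ can separate the two summands. So the spanning claim genuinely fails without the independence of products, and your proposal, which stops at ``one must ensure this recovery is possible,'' leaves the essential step unproved. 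The fix is to notice that the free-algebra grading already guarantees that independence.
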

\begin{proof}
 Observe that distinct products $a'_{i_{1}} a'_{i_{2}}\cdots  a'_{i_{\beta }}$ are linearly independent over $F,$ because each of them has the same degree and each of them is a product  of elements  starting with $a$ or $b$---the generators of $R$.
 Therefore their products are linearly independent over $F$. Alternatively, it can be proved by induction on $\beta $.
\end{proof} 
\begin{lemma}\label{radical}
 Let notation be as in Lemma \ref{matri}, and let $H$ be an algebra generated by matrices $A_{1}, A_{2}, \ldots ,A_{\xi}$. Let $W$ be the Wedderburn radical of $H$, and $W'$ the Wedderburn radical of $H'$. If $r\in H\cap W'$ then $r\in W$.
\end{lemma}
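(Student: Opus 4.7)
The plan is to observe that $H$ is a subalgebra of $H'$ (since its generators $A_1,\ldots,A_\xi$, being matrices of weight $\beta$ in $H'$, lie in $H'$), and then show that $H\cap W'$ is itself a nilpotent ideal of $H$; the conclusion will then follow from the definition of $W$ as the largest nilpotent ideal of $H$.

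First I would verify the inclusion $H\subseteq H'$. By the construction in Lemma \ref{matri}, each $A_i$ spans the set of matrices of weight $\beta$ in $H'$, so every generator of $H$ is an element of $H'$; consequently $H$ is a subalgebra of $H'$ as subalgebras of the ambient matrix ring over $F$.

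Next I would check that $H\cap W'$ is a two-sided ideal of $H$. If $r\in H\cap W'$ and $h\in H$, then on the one hand $rh,hr\in H$ because $H$ is an algebra, and on the other hand $rh,hr\in W'$ because $W'$ is an ideal of $H'$ and $h\in H\subseteq H'$. Hence $rh,hr\in H\cap W'$.

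Finally, $H\cap W'$ is nilpotent: since $W'$ is the Wedderburn radical of $H'$, there is some $n$ with $(W')^n=0$, and therefore $(H\cap W')^n\subseteq (W')^n=0$. Thus $H\cap W'$ is a nilpotent two-sided ideal of $H$, so by the maximality property of the Wedderburn radical, $H\cap W'\subseteq W$. In particular any $r\in H\cap W'$ lies in $W$, which is the desired conclusion. There is no real obstacle here; the lemma is a direct consequence of the fact that intersecting a nilpotent ideal of a larger algebra with a subalgebra yields a nilpotent ideal of the subalgebra.
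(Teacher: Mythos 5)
Your proof is correct and follows essentially the same route as the paper: both exploit $H\subseteq H'$, inherit nilpotency from $W'$, and invoke the maximality of $W$; the only cosmetic difference is that you pass to the nilpotent ideal $H\cap W'$ directly, whereas the paper works with the principal ideal generated by $r$ in $H$ (contained in the ideal generated by $r$ in $H'$, which lies in $W'$).
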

\begin{proof} Observe first that the ideal generated by $r$ in $H'$ is nilpotent, as $r\in W'$. Therefore the ideal generated by $r$ in $H$ is nilpotent, and since $W$ is the sum of all nilpotent ideals in $H$ it follows that $r\in W$.
\end{proof}
\begin{theorem}\label{power}  
 Let notation be as in Lemmas \ref{matri} and \ref{radical}.  Assume that for almost all $i$ entries of $N^{i}$ are  in $\langle x\rangle$.
 Then for  infinitely many $\beta $ the matrix $M=N^{\beta }$ satisfies the following: either $M=0$ or 
 $M=\sum_{i=1}^{\xi }A_{i}a_{i}$, where for each $i$, $A_{i}$ is a matrix with entries in $F$ and $a_{1}, a_{2}, \ldots , 
 a_{\xi}$ are elements from $A\cap \langle x \rangle $ which are linearly independent over $F$ and have the same degree. Moreover, there is an element $e\in H$ which is a linear combination of matrices $A_{1}, \ldots , A_{\xi }$ and  such that $e^{2}=e$ and $r-er\in W$ and $r-re\in W$ for all $r\in H$, where $W$ is the Wedderburn radical of $H$.
\end{theorem}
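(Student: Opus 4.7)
The plan is to reduce the construction of $M=N^\beta$ to producing a suitable idempotent in the enveloping matrix algebra $H'$ and then descending it to the subalgebra $H$ generated by weight-$\beta$ matrices, using the machinery of Theorem~\ref{f} and Lemma~\ref{radical}. First I would split into two cases depending on whether $N$ is nilpotent. If $N$ is nilpotent, then $N^\beta=0$ for all sufficiently large $\beta$, so the first alternative ``$M=0$'' in the conclusion holds for infinitely many $\beta$ and there is nothing to prove. So assume $N$ is not nilpotent; I would first observe that then $H'$ (the $F$-algebra generated by $A'_1,\ldots,A'_{\xi'}$) is also not nilpotent, since linear independence of the $a'_i$ and the same-degree assumption ensure $N^m=0$ would force every product of $m$ of the $A'_i$ to vanish.

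Next, apply Theorem~\ref{f} to $H'$ with the weight $1$ assigned to each generator $A'_i$: this produces a pseudo-homogeneous idempotent $f\in H'$, say of weight $\beta_0$, satisfying $f^2=f$ and $r-fr\in W'$, $r-rf\in W'$ for every $r\in H'$, where $W'$ is the Wedderburn radical of $H'$. I would then take $\beta$ to run through the multiples $k\beta_0$ for $k=1,2,\ldots$, which yields infinitely many candidate powers. The crucial observation is that, since $f^2=f$, we also have $f=f^k$, and $f^k$ is pseudo-homogeneous of weight $k\beta_0=\beta$; thus $f$ belongs to the linear span of weight-$\beta$ matrices of $H'$. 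By Lemma~\ref{matri}, the matrices $A_1,\ldots,A_\xi$ occurring in $N^\beta=\sum A_i a_i$ are exactly a basis of that weight-$\beta$ subspace of $H'$, so $f$ is a linear combination of $A_1,\ldots,A_\xi$, and therefore $f\in H$.

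It then remains to verify the conditions. That the $a_i$ are linearly independent over $F$, of the same degree, and of the form claimed follows directly from Lemma~\ref{matri}; the extra claim $a_i\in A\cap\langle x\rangle$ follows from the implicit setup of the section that $N$ has entries in $A(j)\cap\langle x\rangle$, whence each $a'_i$ may be taken in $\langle x\rangle$, and so any product of $\beta$ of them lies in $\langle x\rangle$. For the Wedderburn condition, given $r\in H\subseteq H'$, Theorem~\ref{f} gives $r-fr,\,r-rf\in W'$; since $r-fr$ and $r-rf$ both lie in $H$ (as $f\in H$), Lemma~\ref{radical} yields $r-fr,\,r-rf\in H\cap W'\subseteq W$. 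Setting $e:=f$ completes the proof.

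The main obstacle I expect is bookkeeping of weights: one has to be careful that the idempotent produced by Theorem~\ref{f} applied to $H'$ with its weight-$1$ generators $A'_i$ actually lives inside the subalgebra $H$ generated by the weight-$\beta$ generators $A_i$, which is why we must restrict $\beta$ to multiples of the weight $\beta_0$ of $f$ and invoke the trick $f=f^k$. A secondary subtlety is the descent of the Wedderburn radical from $H'$ to $H$, which is precisely what Lemma~\ref{radical} was designed to handle.
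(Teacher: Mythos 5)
Your proof follows essentially the same route as the paper: apply Theorem~\ref{f} to $H'$ to obtain a pseudo-homogeneous idempotent $f$ of some weight $\beta_0$, use Lemma~\ref{matri} to see that for $\beta=k\beta_0$ the weight-$\beta$ pseudo-homogeneous elements of $H'$ are spanned by $A_1,\ldots,A_\xi$ (so $f=f^k\in H$), descend the Wedderburn condition via Lemma~\ref{radical}, and obtain infinitely many valid $\beta$ by replacing $e$ with $e^k$. Your write-up is in fact slightly cleaner than the paper's at two points: you explicitly note that $N$ not nilpotent forces $H'$ not nilpotent, and that $r-fr$, $r-rf$ lie in $H$ (so $H\cap W'\subseteq W$ applies), both of which the paper only implies.
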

\begin{proof}
 Recall that $N=\sum_{i=1}^{\xi '}A'_{i}a'_{i}$. If the algebra $H'$ generated by $A'_1, A'_2, \ldots , A'_{\xi '}$ is nilpotent then $N^{\beta }=0$ for almost all $\beta $. It remains to consider the case when $N$ is not a nilpotent matrix.

 We will first show that all the assertions of our theorem except  the assertion that $a_{1}, a_{2}, \ldots , 
 a_{\xi}$ are elements from $A\cap \langle x \rangle $ hold for some number $\beta $. 
Recall that  $A'_{1}, A'_{2}, \ldots A'_{\xi '}$ are generators of algebra $H'$. We can consider Definition \ref{17} for algebra $R=H'$ and for generators $r_{i}=A_{i}'$ for $i=1,\ldots , n$ where $n=\xi '$.
 We can then apply Theorem \ref{f} to algebra $H'$ to get that  there is $f\in H'$ such that $f^{2}=f$ and $f-fr=0$ and $r-rf=0$,  and $f$ is a pseudo-homogeneous  element of weight $\beta $, for an appropriate $\beta $. By Lemma \ref{matri} the set 
$\{A_{1}, A_{2}, \ldots ,A_{\xi}\}$ equals the set of elements which are products of exactly $\beta $ elements from the set 
$\{A'_{1}, \ldots , A'_{\xi '}\}$. Therefore 
 matrices $A_{1}, \ldots , A_{\xi }$ span the linear space of pseudo-homogeneous elements of weight $\beta $ in $H'$, hence $f$ is a linear combination of  $A_{1}, \ldots , A_{\xi }$; hence $f\in H$.
 Let $W'$ be the Wedderburn radical of $H'$. By Lemma \ref{radical}, $W'\cap H\subseteq W$, therefore $r-rf\in W$ and $r-fr\in W$ for all $r\in H$. Therefore  our result holds for  $e=f$ and $M=N^{\beta }$ (by Lemma \ref{matri}).
 We have shown that our result holds for some $\beta $.

To show that there are infinitely many elements $\beta $ with this property, observe that for any natural number $k>0$  we can apply the same reasoning to $\beta  _{k}=k\cdot \beta$ and $e_{k}=e^{k}$ instead of  $\beta $ and $e$.
 In this way we will obtain infinitely many $\beta \in \{\beta _{1}, \beta _{2} , \beta _{3} , \ldots \}$ satisfying the thesis.

Notice that $a_{1}, a_{2}, \ldots , 
 a_{\xi}$ are elements from $A\cap \langle x \rangle $ for sufficiently large $\beta $, this finishes the proof.
\end{proof}

\begin{corollary}\label{change} Let notation be as in Theorem \ref{power}. Then  we can assume that $A_{1}=e$ (by using linear combinations of elements $a_{i}$ instead of elements $a_{i}$).
\end{corollary}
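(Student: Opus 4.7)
The plan is to do an elementary change of basis. By Theorem \ref{power}, we have $M=\sum_{i=1}^{\xi}A_{i}a_{i}$ together with an element $e$ which, by construction, lies in the $F$-span of $A_{1},\ldots,A_{\xi}$. Write
\[
e=\sum_{i=1}^{\xi}c_{i}A_{i},\qquad c_{i}\in F.
\]
Since $e\neq 0$ (as $e^{2}=e$ and $e$ acts as a left/right identity modulo $W$ on a non-nilpotent algebra), at least one $c_{i}$ is nonzero; after permuting the indices I may assume $c_{1}\neq 0$. The key move is then to solve for $A_{1}$:
\[
A_{1}=\tfrac{1}{c_{1}}\bigl(e-\textstyle\sum_{i=2}^{\xi}c_{i}A_{i}\bigr),
\]
and substitute into $M$, which gives
\[
M=\tfrac{1}{c_{1}}\,e\,a_{1}+\sum_{i=2}^{\xi}A_{i}\Bigl(a_{i}-\tfrac{c_{i}}{c_{1}}a_{1}\Bigr).
\]

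Now I set $B_{1}=e$, $b_{1}=\tfrac{1}{c_{1}}a_{1}$, and for $i\ge 2$, $B_{i}=A_{i}$ and $b_{i}=a_{i}-\tfrac{c_{i}}{c_{1}}a_{1}$, so that $M=\sum_{i=1}^{\xi}B_{i}b_{i}$ with $B_{1}=e$. It remains to check that $b_{1},\ldots,b_{\xi}$ and $B_{1},\ldots,B_{\xi}$ satisfy all the properties required in Assumption~3 (as listed in Theorem \ref{power}). The elements $b_{i}$ are $F$-linear combinations of the $a_{i}$, hence belong to $A(\alpha)\cap\langle x\rangle$ and are homogeneous of the same degree $\alpha$. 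Their $F$-linear independence follows from a short computation: if $\sum\alpha_{i}b_{i}=0$, expanding yields an $F$-linear relation among $a_{1},\ldots,a_{\xi}$ whose coefficients on $a_{2},\ldots,a_{\xi}$ are exactly $\alpha_{2},\ldots,\alpha_{\xi}$, forcing $\alpha_{i}=0$ for $i\ge 2$ and then $\alpha_{1}=0$.

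Finally, since $A_{1}=\tfrac{1}{c_{1}}(B_{1}-\sum_{i\ge 2}c_{i}B_{i})$, the $F$-spans of $\{A_{i}\}$ and $\{B_{i}\}$ coincide, so the $B_{i}$ still span the space of weight-$\beta$ matrices in $H'$; consequently the algebra they generate is unchanged, the Wedderburn radical $W$ is the same, and the conditions $e^{2}=e$, $r-er\in W$, $r-re\in W$ carry over verbatim. There is no real obstacle here---this is just a linear-algebra normalization---but the one point to be careful about is verifying that after the substitution the new $b_{i}$ still lie in $\langle x\rangle$ and have the correct degree, which is immediate because the change of basis is over~$F$ and is homogeneous.
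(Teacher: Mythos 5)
Your proposal is correct and carries out exactly the linear-algebra normalization the paper has in mind; the paper itself offers no explicit proof, only the parenthetical hint "(by using linear combinations of elements $a_i$)", and your substitution
\[
M=\tfrac{1}{c_{1}}\,e\,a_{1}+\sum_{i=2}^{\xi}A_{i}\Bigl(a_{i}-\tfrac{c_{i}}{c_{1}}a_{1}\Bigr)
\]
together with the dual change of basis $B_1=e$, $B_i=A_i$ is the intended argument. Your checks (that $e\neq 0$ since otherwise $H=W$ would be nilpotent, that the new $b_i$ remain linearly independent and stay in $A(\alpha)\cap\langle x\rangle$, and that the $F$-span of $\{B_i\}$ equals that of $\{A_i\}$ so $H$ and $W$ are unchanged) are all correct and complete the verification that the new data still satisfy Assumption~3.
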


\begin{corollary}\label{Assumption3} Let $A, A', R$ be as in Theorem \ref{Jacobson}, and $\langle x\rangle $ denote the ideal generated by $x$ in $A'$. Let $N$ be a matrix whose coefficients are elements of $A$ of the same degree.
 Assume that for almost all $i$ entries of $N^{i}$ are  in $\langle x\rangle$, then for some natural number $q$ either  $N^{q}=0$ or $M=N^{q}$ satisfies  Assumption $3$.
\end{corollary}
\begin{proof}
  It follows from Theorem \ref{power} and from Corollary \ref{change}.
\end{proof}

\begin{theorem}\label{assumption1}
  Assumption $1$ holds for $F$-algebra $A$.
\end{theorem}
\begin{proof} Let $N$ be a matrix with entries in $A^{*}(j)$ for some $j$, and such that for almost all $i $ matrix 
$N^{i }$ has all entries in $\langle x\rangle $.  By  Corollary \ref{Assumption3}  either $N$ is a nilpotent matrix or for some $q$  matrix $M=N^{q}$ satisfies Assumption $3$.
 By Theorem \ref{slm} applied to $M=N^{q}$, there are infinitely many $n$ such that the dimension of the space $R\cap S(L(M^{n}))$ doesn't exceed $\sqrt {n}$. Observe that 
 $S(L(M^{n}))=S(L(N^{q\cdot n}))$ (because operations $S$ and $L$ depend only upon the matrix, not on the way it is presented). Therefore $R\cap S(L(N^{q\cdot n}))$ has dimension $\leq \sqrt n\leq \sqrt {qn}$. This holds for infinitely many $n$.

\end{proof}

{\bf Proof of Theorem \ref{main}.}  
 Recall that $F$ is the  algebraic closure of a  finite field, and hence $F$ is countable and infinite.   
 By Theorem \ref{assumption1}, Assumption $1$ holds for $F$-algebra $A$.
 Then, by Theorem \ref{main2} there is an $F$-algebra $Z$ and a derivation $D$ on $Z$ such that the differential polynomial ring $Z[y; D]$ is Jacobson radical but $Z$ is not nil.

Assume now that $K$ is a subfield of $F$. If $R$ is an $F$-algebra then $R$ is also a $K$-algebra. 
 Therefore, Theorem \ref{main} also holds for an arbitrary subfield of the algebraic closure of any finite field. 

{\bf Acknowledgements} This research was supported by ERC Advanced grant Coimbra 320974. The author would like to thank Andr{\' e}  Leroy, Jason Bell and  Piotr Grzeszczuk for many useful comments. The author is very grateful to the unknown referee for 
 his/her many comments, which have improved the manuscript.

\end{document}